\DeclareSymbolFont{symbolsC}{U}{ntxsyc}{m}{n}
\DeclareMathSymbol{\nsimeq}{\mathrel}{symbolsC}{59}
\def\@tocline#1#2#3#4#5#6#7{\relax
  \ifnum #1>\c@tocdepth 
  \else
    \par \addpenalty\@secpenalty\addvspace{#2}%
    \begingroup \hyphenpenalty\@M
    \@ifempty{#4}{%
      \@tempdima\csname r@tocindent\number#1\endcsname\relax
    }{%
      \@tempdima#4\relax
    }%
    \parindent\z@ \leftskip#3\relax \advance\leftskip\@tempdima\relax
    \rightskip\@pnumwidth plus4em \parfillskip-\@pnumwidth
    #5\leavevmode\hskip-\@tempdima
      \ifcase #1
      \or\or \hskip 2em \or \hskip 2homologyem \else \hskip 3em \fi%
      #6\nobreak\relax
    \dotfill\hbox to\@pnumwidth{\@tocpagenum{#7}}\par
    \nobreak
    \endgroup
  \fi}
\theoremstyle{plain}
\newtheorem{theorem}{Theorem}[section]
\newtheorem{question}[theorem]{Question}
\newtheorem{lemma}[theorem]{Lemma}
\newtheorem{proposition}[theorem]{Proposition}
\newtheorem{conjecture}[theorem]{Conjecture}
\theoremstyle{definition}
\newtheorem{conventions}[theorem]{Conventions}
\newtheorem{notation}[theorem]{Notation}
\newtheorem{remark}[theorem]{Remark}
\newtheorem{definition}[theorem]{Definition}
\newtheorem{example}[theorem]{Example}
\numberwithin{equation}{section}
\newcommand{\ilim}{\mathop{\varprojlim}\limits} 
\newcommand{\dlim}{\mathop{\varinjlim}\limits}  
\newcommand{\Supp}{{\rm Supp}}
\newcommand{\Proj}{{\rm Proj \,}}
\newcommand{\Hom}{{\rm Hom}}
\newcommand{\Spec}{{\rm Spec \,}}
\renewcommand{\tilde}{\widetilde}
\newcommand{\sF}{{\mathcal F}}
\newcommand{\sL}{{\mathcal L}}
\newcommand{\sS}{{\mathcal S}}
\newcommand{\A}{{\mathbb A}}
\newcommand{\C}{{\mathbb C}}
\renewcommand{\P}{{\mathbb P}}
\newcommand{\Sing}{{\rm Sing_*^{\A^1}}} 
\newcommand{\Sp}{{\rm Sp}} 		
\def\<{\langle}
\def\>{\rangle} 
\def\-{\overline} 
\def\~{\widetilde}
\def\^{\widehat}
\def\fr{\mathfrak}
\def\@{\mathcal}
\def\!{\mathscr}
\def\#{\mathbb}
\def\_{\underline}
\begin{document}

\title{$\A^1$-connected components of ruled surfaces}

\author{Chetan Balwe}
\address{Department of Mathematical Sciences, Indian Institute of Science Education and Research Mohali, Knowledge City, Sector-81, Mohali 140306, India.}
\email{cbalwe@iisermohali.ac.in}

\author{Anand Sawant}
\address{School of Mathematics, Tata Institute of Fundamental Research, Homi Bhabha Road, Colaba, Mumbai 400005, India.}
\email{asawant@math.tifr.res.in}
\date{}
\thanks{Chetan Balwe was supported by SERB-DST MATRICS Grant: MTR/2017/000690}
\thanks{Anand Sawant was supported by DFG Individual Research Grant SA3350/1-1 while he was affiliated to Ludwig-Maximilians-Universit\"at, Munich, where a part of this work was carried out. He acknowledges support of the Department of Atomic Energy, Government of India, under project no. 12-R\&D-TFR-5.01-0500.}

\begin{abstract}
A conjecture of Morel asserts that the sheaf of $\A^1$-connected components of a space is $\A^1$-invariant.  Using purely algebro-geometric methods, we determine the sheaf of $\A^1$-connected components of a smooth projective surface, which is birationally ruled over a curve of genus $>0$.  As a consequence, we show that Morel's conjecture holds for all smooth projective surfaces over an algebraically closed field of characteristic $0$.
\end{abstract}

\maketitle
\tableofcontents

\setlength{\parskip}{2.5pt plus2pt minus1pt}

\section{Introduction}
\label{section introduction}

Since its inception in the foundational work of Morel and Voevodsky \cite{Morel-Voevodsky}, $\A^1$-homotopy theory has provided a systematic framework to successfully adapt several techniques of algebraic topology to the realm of algebraic geometry by having the affine line $\A^1$ play the role of the unit interval.  Fix a base field $k$ and let $Sm/k$ denote the category of smooth, finite-type, separated schemes over $k$.  Let $\mathcal H(k)$ denote the $\mathbb A^1$-homotopy category, which is obtained by taking a suitable localization of the category of simplicial sheaves of sets (also called \emph{spaces}) on $Sm/k$ for the Nisnevich topology.  In $\A^1$-algebraic topology, one then studies the $\mathbb A^1$-homotopy sheaves of a (pointed) space.  In topology, the set of connected components of a topological space and the homotopy groups of a (pointed) topological space are discrete as topological spaces.  The counterpart of discreteness in $\A^1$-homotopy theory is the notion of \emph{$\A^1$-invariance}.

It is thus very natural to ask if the $\mathbb A^1$-homotopy sheaves of a (pointed) space $\mathcal X$ are $\mathbb A^1$-invariant.  Morel \cite[Theorem 6.1, Corollary 6.2]{Morel} has shown that the homotopy sheaves $\pi_n^{\mathbb A^1}(\mathcal X, x)$, for $n\geq 1$, are $\mathbb A^1$-invariant.  However, quite incredibly, $\mathbb A^1$-invariance of the sheaf of $\mathbb A^1$-connected components is not yet known and has been conjectured by Morel \cite[Conjecture 1.12]{Morel}.  

\begin{conjecture}[Morel]
\label{conjecture Morel}
For any space $\@X$, the sheaf $\pi_0^{\A^1}(\@X)$ is $\A^1$-invariant.
\end{conjecture}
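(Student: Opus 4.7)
The plan is to split Morel's conjecture into a reduction step, an iterated-singular-complex step, and a birational-control step, each of which is substantial. Since $\pi_0^{\A^1}$ commutes with simplicial colimits and every space $\mathcal{X}$ admits a resolution by coproducts of representable sheaves $X \in Sm/k$, I would first try to show that the class of sheaves with $\A^1$-invariant $\pi_0^{\A^1}$ is closed under the relevant homotopy colimits. A skeletal filtration argument, together with the known $\A^1$-invariance of $\pi_n^{\A^1}$ for $n \geq 1$ due to Morel, should then reduce the conjecture to the representable case $\mathcal{X} = X$ with $X$ smooth over $k$.

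For such $X$, I would work with the sheaf $\mathcal{S}(X)$ of naive $\A^1$-chain connected components: the Nisnevich sheafification of $U \mapsto X(U)/\sim$, where $\sim$ is generated by $h(0) \sim h(1)$ for $h : U \times \A^1 \to X$. By Morel--Voevodsky, $\pi_0^{\A^1}(X)$ is a quotient of $\mathcal{S}(X)$, and more precisely it is the colimit of the iterated tower $\mathcal{S}(X) \surj \mathcal{S}^2(X) \surj \cdots$; it is the universal $\A^1$-invariant quotient. The next step of the plan is to show, by induction on $\dim X$ using resolution of singularities and a suitable stratification, that this tower stabilizes at some finite stage $n(X)$, and then to verify $\A^1$-invariance of the stable sheaf by producing explicit lifts of homotopies over $U \times \A^1$.

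The main obstacle will be the phenomenon of \emph{$\A^1$-ghost homotopies}: pairs of sections of $X$ over $U$ identified in $\pi_0^{\A^1}(X)$ only after repeated applications of $\mathcal{S}$. The $\A^1$-invariant birational sheaf $\pi_0^{b\A^1}(X)$ of Asok--Morel provides an upper bound, and Morel's conjecture for $X$ is essentially equivalent to the assertion that the canonical surjection $\pi_0^{\A^1}(X) \surj \pi_0^{b\A^1}(X)$ is an isomorphism, that is, every birational $\A^1$-equivalence is realized by a finite chain of honest $\A^1$-homotopies over $U$. Controlling ghosts along a geometric fibration is feasible in dimension $\leq 2$, which is exactly what the present paper accomplishes for ruled surfaces via the birational classification of surfaces over higher-genus curves; but in higher dimensions the ghosts that cross exceptional loci of successive blow-ups have no known combinatorial control.

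I therefore expect the hardest step to be providing any uniform geometric bound on the number of iterations of $\mathcal{S}$ needed, or equivalently, producing a canonical $\A^1$-homotopy between sections known to agree birationally. Without restrictive hypotheses on $X$, the conjecture appears to require either a new invariant refining $\pi_0^{\A^1}$ or a substantially non-geometric argument; neither is currently in sight. My proposal is therefore necessarily staged: first establish the conjecture in low dimensions (as this paper does for surfaces), and then develop the structural theory of ghosts needed before attempting the fully general statement.
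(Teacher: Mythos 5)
The statement you are addressing is Morel's conjecture itself, and the paper does not prove it: it is stated as an open conjecture, and the paper establishes only the special case of smooth projective surfaces over an algebraically closed field of characteristic $0$ (via the ruled case, Theorem \ref{theorem ruled iterations of S}). Your proposal, by its own admission, is a staged research plan rather than a proof, so there is a genuine gap --- in fact several concrete ones. First, the opening reduction to representable $\mathcal{X}=X$ is unsupported: it is not known that the class of spaces with $\A^1$-invariant $\pi_0^{\A^1}$ is closed under the homotopy colimits arising from a skeletal filtration, and the $\A^1$-invariance of the higher $\pi_n^{\A^1}$ does not by itself yield such a descent statement for $\pi_0^{\A^1}$. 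Second, your use of the identification $\pi_0^{\A^1}(X)\simeq \varinjlim_n \mathcal{S}^n(X)$ is circular: by Theorem \ref{theorem BHS iterations of S}, the comparison map $\pi_0^{\A^1}(X)\to \mathcal{L}(X)$ is an isomorphism \emph{if and only if} $\pi_0^{\A^1}(X)$ is $\A^1$-invariant, i.e.\ exactly the statement to be proved; and the finite stabilization of the tower $\mathcal{S}(X)\surj\mathcal{S}^2(X)\surj\cdots$ that you propose to prove by induction on dimension is precisely the paper's open Question \ref{main question}, for which no general mechanism is known.

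Third, your claim that Morel's conjecture for $X$ is ``essentially equivalent'' to the canonical surjection onto the Asok--Morel birational sheaf being an isomorphism is contradicted by the paper's own results: for a non-minimal surface $X$ birationally ruled over a curve $C$ of genus $>0$, Theorem \ref{theorem ruled iterations of S}(b) gives $\pi_0^{\A^1}(X)\nsimeq C$, whereas any birational invariant agrees with its value on the minimal model $E$, for which $\pi_0^{\A^1}(E)\simeq C$; nevertheless Morel's conjecture \emph{does} hold for $X$. So the conjecture is neither implied by nor equivalent to the birational comparison you propose, and indeed the delicacy the paper exposes is exactly that $\pi_0^{\A^1}$ fails to be a birational invariant while still being $\A^1$-invariant. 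In short: the general conjecture remains open, your plan does not close it, and the specific reductions you rely on (colimit descent, unconditional identification with $\mathcal{L}(X)$, finite stabilization, birational equivalence) are each either unproven or, in the last case, false as stated.
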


Let $\@X$ be a space.  A crucial drawback one faces while handling $\A^1$-connected components as opposed to higher homotopy sheaves of $\@X$ is the lack of a natural group structure on $\pi_0^{\A^1}(\@X)$ in general.  Another serious difficulty in handling $\pi_0^{\A^1}(\@X)$ is presented by the fact that the explicit description of the \emph{$\A^1$-fibrant replacement functor} given in \cite[p. 107]{Morel-Voevodsky} is uncomputable in general.  Apart from some trivial examples (curves, $\A^1$-rigid spaces, etc.) in which Conjecture \ref{conjecture Morel} clearly holds, it has been verified for $H$-groups and homogeneous spaces for $H$-groups \cite{Choudhury} and projective non-uniruled surfaces \cite{Balwe-Hogadi-Sawant}.  Conjecture \ref{conjecture Morel} is wide open in general, even for smooth projective schemes over an algebraically closed field.  The main result of this paper is as follows.

\begin{theorem}
\label{main theorem}
Let $X$ be a smooth projective birationally ruled surface over an algebraically closed field $k$ of characteristic $0$.  Then the sheaf $\pi_0^{\A^1}(X)$ is $\A^1$-invariant.
\end{theorem}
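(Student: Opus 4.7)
The plan is to split the argument according to the genus of the base curve of the ruling.  A smooth projective birationally ruled surface $X/k$ is birational to $C\times\P^1$ for some smooth projective curve $C$; if $g(C)=0$ then $X$ is rational, while if $g(C)\geq 1$ then $C$ is $\A^1$-rigid and any rational map from $X$ to $C$ extends to a morphism $\pi\colon X\to C$.  In each case I would aim to compute $\pi_0^{\A^1}(X)$ explicitly as a sheaf that is manifestly $\A^1$-invariant.

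The rational case should yield $\pi_0^{\A^1}(X)=\Spec k$.  Over an algebraically closed field of characteristic zero a smooth projective rational surface is obtained from $\P^2$ or a Hirzebruch surface $\F_n$ by a finite sequence of blowups at closed points; I would argue that each elementary step preserves $\A^1$-connectedness not merely at the level of $k$-points but at the sheaf level, using that locally any two sections of a ruled fibration over an affine open can be joined by an explicit $\A^1$-homotopy.  The $\A^1$-invariance of $\Spec k$ is tautological, so Morel's conjecture is immediate in this case.

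The essential case is $g(C)\geq 1$.  Since $C$ is $\A^1$-rigid, post-composing any $\A^1$-homotopy $U\times\A^1\to X$ with $\pi$ yields a map constant in the $\A^1$-direction; hence $\pi$ descends to a morphism of Nisnevich sheaves $\bar\pi\colon\pi_0^{\A^1}(X)\to C$.  The heart of the proof is to show that $\bar\pi$ is an isomorphism.  Surjectivity follows from the existence of Nisnevich-local sections of the ruled fibration $\pi$, while injectivity reduces to the assertion that any two Nisnevich-local sections of $\pi$ that pointwise coincide become equal in $\pi_0^{\A^1}(X)$.  Granting this isomorphism, the $\A^1$-invariance of $\pi_0^{\A^1}(X)$ is inherited from that of $C$, and Theorem \ref{main theorem} follows.

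The principal obstacle is the injectivity just mentioned.  Fibrewise connection of two sections inside each rational fibre produces only a single step of naive $\A^1$-chain equivalence, whereas $\pi_0^{\A^1}(X)$ is the sheafification of a potentially transfinite iteration of this relation, and the gap between the two is exactly where Morel's conjecture can fail in general.  I would handle this by induction on the number of blowups needed to reduce $X$ to a $\P^1$-bundle $\bar X\to C$: for $\bar X$ itself Zariski-local triviality and the $\A^1$-contractibility of each fibre settle the matter in a single iteration, while each additional blowup at a closed point introduces a $(-1)$-curve whose effect on the iteration of chain-connected components must be controlled, for instance via a careful ghost-homotopy analysis.  Verifying that this iteration stabilises after finitely many steps is the technical heart of the argument.
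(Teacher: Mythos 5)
The decomposition you propose (rational vs.\ genus $\geq 1$ base curve) is the same as the paper's, and your remarks about $\A^1$-rigidity of $C$ forcing $\A^1$-homotopies to be fiberwise, as well as the need for a ghost-homotopy analysis and induction on the number of blowups, are all in the spirit of what the paper actually does. However, the pivot of your genus $\geq 1$ argument is wrong in a way that undercuts the whole plan.

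You claim that $\bar\pi\colon\pi_0^{\A^1}(X)\to C$ is an isomorphism and propose to deduce $\A^1$-invariance of $\pi_0^{\A^1}(X)$ from that of $C$. This is exactly the naive expectation recorded in Remark 1.13 of Morel's book, and one of the points of this paper is that it is \emph{false} whenever $X$ is not itself a $\P^1$-bundle over $C$: Theorem \ref{theorem ruled iterations of S}(b) asserts $\pi_0^{\A^1}(X)\nsimeq C$ for any non-minimal birationally ruled $X$. What is true is only that $\bar\pi$ induces a bijection on $F$-points for finitely generated separable field extensions $F/k$; on henselian local sections of higher dimension, two sections $U\to X$ lying over the same $\gamma\colon U\to C$ need not be identified in $\pi_0^{\A^1}(X)$, because the scheme-theoretic intersection with the exceptional locus is an invariant of the $n$-ghost homotopy class (this is the content of Proposition \ref{proposition local generator ghost homotopy}). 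So your ``injectivity'' step — that fiberwise coinciding sections become equal in $\pi_0^{\A^1}(X)$ — fails, and hence $\A^1$-invariance cannot be inherited from $C$.

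The correct statement, and the one the paper proves, is that the tower $\@S(X)\to\@S^2(X)\to\cdots$ stabilizes after two steps, i.e., $\pi_0^{\A^1}(X)\simeq\@S^n(X)$ for all $n\geq 2$, which by Theorem \ref{theorem BHS iterations of S} gives $\A^1$-invariance directly without ever identifying $\pi_0^{\A^1}(X)$ with $C$. Your last paragraph gestures at this (stabilization of the iteration, ghost homotopies, induction on blowups), which is indeed the technical core, but it is presented as a fallback to plug a gap in the $\bar\pi$-isomorphism argument rather than as the argument itself. In effect you would need to abandon the isomorphism claim entirely and replace it by a stabilization result; that requires the local machinery of Sections 3--5 of the paper (Lemma \ref{lemma S^2 agrees with pi0}, the rigidity Proposition \ref{proposition local generator ghost homotopy}, the \'etale-cover Theorem \ref{theorem etale cover}, and the explicit $1$-ghost homotopy construction in Proposition \ref{proposition restricted nodal}). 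The rational case in the paper is dispatched simply by citing Asok--Morel's result that smooth projective rational surfaces are $\A^1$-connected; your sketch of re-proving this by hand is not wrong in its conclusion but its ``locally join two sections by an explicit $\A^1$-homotopy'' heuristic faces the same obstructions as the non-rational case.
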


The case of non-uniruled surfaces (over an arbitrary field) was already handled in \cite{Balwe-Hogadi-Sawant}.  Thus, Conjecture \ref{conjecture Morel} holds for all smooth projective surfaces over an algebraically closed field of characteristic $0$.  Appropriate analogues of some of the ideas used in the proof of Theorem \ref{main theorem} go through while working over an arbitrary field using the classification of surfaces (see \cite[Chapter III, Theorem 2.2]{Kollar}).  We intend to take up the problem of proving Morel's conjecture for surfaces over an arbitrary field in future work.

Since $\pi_0^{\A^1}(X)$ is the Nisnevich sheaf associated with the presheaf $$U \mapsto \Hom_{\@H(k)}(U, X)$$ on $Sm/k$, the study of $\pi_0^{\A^1}(X)$ is closely related to the following fundamental question. 

\begin{question}
\label{question naive homotopies}
How far is $\Hom_{\@H(k)}(U, X)$ from the set equivalence classes of morphisms of schemes $U \to X$ under the equivalence relation generated by naive $\A^1$-homotopies?
\end{question}

One might hope that the two sets in Question \ref{question naive homotopies} agree, at least when $U$ is a smooth henselian local scheme over $k$.  Let $\@S(X)$ denote the sheaf of \emph{$\A^1$-chain homotopy classes} of $X$.  One can iterate the construction of taking $\A^1$-chain connected components and take the limit to form the \emph{universal $\A^1$-invariant quotient} $$\@L(X): = \varinjlim_n \@S^n(X)$$ of $X$.  In view of \cite[Theorem 2.13, Remark 2.15, Corollary
2.18]{Balwe-Hogadi-Sawant}, if Conjecture \ref{conjecture Morel} holds, then the canonical map $\pi_0^{\A^1}(X) \to \sL(X)$ is an isomorphism.  The sheaf $\sL(X)$ thus gives a purely geometric way to study $\pi_0^{\A^1}(X)$ and it is natural to ask the following question.

\begin{question}
\label{main question}
Let $X$ be a scheme over $k$.  Does there exist $n \in \mathbb N$ such that $\@L(X) \simeq \mathcal S^n(X)$?
\end{question}

It is worthwhile to point out that there exist smooth projective schemes $X$ for which neither $\@L(X)$ nor $\pi_0^{A^1}(X)$ agrees with $\@S(X)$.  The first example of a smooth projective scheme $X$ (over $\C$) for which $\@S(X) \nsimeq \@L(X)$ was constructed in \cite[Section 4]{Balwe-Hogadi-Sawant}.  The first general class of smooth varieties $X$ for which $\pi_0^{\A^1}(X)$ does not agree with $\@S(X)$ but with a further iteration of the functor $\@S$ on $X$ is provided by $\A^1$-connected anisotropic groups over an infinite perfect field by the results of \cite{Balwe-Sawant-IMRN} (see \cite[Corollary 3.4]{Sawant-IC}).  Theorem \ref{main theorem} is a consequence of the following general result, which provides the first general family of smooth projective varieties for which $\pi_0^{\A^1}(X)$ is given by a nontrivial iteration of $\@S$ on $X$.

\begin{theorem}
\label{theorem ruled iterations of S}
Let $E$ be a $\mathbb P^1$-bundle over a smooth projective curve $C$ of genus $g>0$ over an algebraically closed field $k$ of characteristic $0$. Let $X$ be a smooth projective surface, which is not isomorphic to $E$ and has $E$ for its minimal model. Then we have the following:
\begin{enumerate}[label=$(\alph*)$]
\item $\pi_0^{\A^1}(E) \simeq \mathcal S^n(E) \simeq C$, for all $n \geq 1$;
\item $\pi_0^{\A^1}(X) \simeq \mathcal S^n(X)$, for all $n \geq 2$.  Moreover, $\pi_0^{\A^1}(X) \nsimeq C$.
\end{enumerate}
\end{theorem}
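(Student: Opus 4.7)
\emph{Part (a).} Since $E \to C$ is a Zariski-locally trivial $\P^1$-bundle, for any smooth henselian local scheme $U$ two $U$-points of $E$ lying over the same $U$-point of $C$ differ by an element of $\P^1(U)$. As $\P^1(U)$ is naively $\A^1$-chain connected (cover by two $\A^1$'s), this identification gives $\sS(E) \simeq C$ as Nisnevich sheaves. Since $C$ has positive genus it is $\A^1$-rigid, hence $\sS(C) \simeq C$; iterating, $\sS^n(E) \simeq C$ for every $n \geq 1$, and $\sL(E) \simeq C$ is already $\A^1$-invariant. The criterion of \cite{Balwe-Hogadi-Sawant} recalled in the introduction then identifies $\pi_0^{\A^1}(E) \simeq \sL(E) \simeq C$.

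\emph{Part (b).} Factor $X \to E$ as an iterated blowup at finitely many closed points, so that the singular fibres of $\rho \colon X \to C$ are trees of rational curves and $\rho$ induces a surjection $\sS(X) \twoheadrightarrow C$. I would first show this map is \emph{not} injective: at a node $q$ of such a singular fibre, with $U = \Spec \sO_{X,q}^h$, two sections $U \to X$ following the two different branches through $q$ are not naively $\A^1$-chain equivalent, because any naive homotopy $U \times \A^1 \to X$ must collapse the $\A^1$-factor into a fibre of $\rho$ (since $C$ is $\A^1$-rigid), and the local geometry at $q$ obstructs crossing between branches in a single homotopy step. This shows $\sS(X) \nsimeq C$, whence $\pi_0^{\A^1}(X) \nsimeq C$ once $\pi_0^{\A^1}(X) \simeq \sS^2(X)$ is established.

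Next, to identify $\sS^2(X)$: on a suitable Nisnevich neighborhood of the closed point of $U$ the node $q$ splits into two disjoint branches, each of which becomes naively chain-connectible in $X$ to the generic fibre of $\rho$ through a tautological section. This produces the required $\A^1$-chain in $\sS(X)(U)$, so the per-node branch ambiguity that obstructed injectivity in $\sS(X)$ collapses at the second iteration, but a residual ``global'' ambiguity (recording which connected component of the Nisnevich-local branch resolution was chosen, coherently across nodes) survives and witnesses $\sS^2(X) \nsimeq C$. The plan is then to show that no further identifications occur at the third iteration, giving $\sS^2(X) \simeq \sS^n(X)$ for all $n \geq 2$, and that $\sS^2(X)$ is $\A^1$-invariant; the cited criterion then yields $\sS^2(X) \simeq \pi_0^{\A^1}(X)$.

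\emph{Main obstacle.} The hardest step will be establishing the stabilization $\sS^2(X) \simeq \sS^3(X)$ together with the $\A^1$-invariance of $\sS^2(X)$. Both hinge on a careful combinatorial analysis of $\A^1$-chains in $\sS(X)$ over arbitrary smooth henselian local schemes that may simultaneously be centered at several nodes, and on ruling out unanticipated identifications coming from the global geometry of $X$ (e.g.\ rational curves in $X$ not contained in a single fibre of $\rho$). The hypothesis that $k$ is algebraically closed of characteristic zero and that the minimal model is a $\P^1$-bundle over a curve of positive genus will enter decisively in restricting the classes of rational curves available. An explicit description of $\sS^2(X)$ as a quotient of $X$ by a computable equivalence relation—recording the image in $C$ together with the combinatorial datum of which ``Nisnevich branch'' is selected at each node of the blowup tree—will be required to check $\A^1$-invariance directly.
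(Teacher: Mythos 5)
\emph{Part (a).} Your argument for part (a) is essentially the paper's: fix a morphism $\gamma: U \to C$ (this is forced by Lemma~\ref{lemma strategy ruled} since $C$ is $\A^1$-rigid), note that the pullback $E_\gamma \simeq \P^1_U$ since $U$ is henselian local and the $\P^1$-bundle is \'etale-locally trivial, and conclude that $\sS(E)(U) = C(U)$. The rest follows formally.

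\emph{Part (b).} Here there is a genuine gap, and it is precisely at the step you flag as your ``main obstacle.'' Your intuition that two branches through a node cannot be crossed by a single naive homotopy is correct in spirit, but the concrete reason is the rigidity result of Proposition~\ref{proposition local generator ghost homotopy}: any $n$-ghost homotopy lifting to the blowup $\~X$ is forced to preserve the pullback ideal $\alpha^*(\sI_T)$ exactly, not merely its support. This is the algebraic engine of the whole argument and your proposal has no substitute for it. The subsequent analysis is not a combinatorial bookkeeping of ``which Nisnevich branch was chosen''; rather, the obstruction to being $n$-ghost homotopic is the precise algebraic condition $\delta := r_2/r_1 - 1 \in \sqrt{\langle r, r_0/r \rangle}$ derived in the proof of Theorem~\ref{theorem nodal case}, and — crucially — the paper never writes down $\sS^2(X)$ as an explicit quotient of $X$. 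Instead it shows that whenever two sections are $n$-ghost homotopic for \emph{any} $n$, one can manufacture an explicit $1$-ghost homotopy satisfying the hypotheses of Lemma~\ref{lemma S^2 agrees with pi0}. This simultaneously proves $\sS^2(X)(U)=\sS^n(X)(U)$ and $\sS^2(X)(U)=\pi_0^{\A^1}(X)(U)$ for smooth henselian local $U$, which is the desired identification on stalks without ever computing $\sS^2(X)$ globally.

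Your claim that ``on a suitable Nisnevich neighborhood of the closed point of $U$ the node $q$ splits into two disjoint branches, each naively chain-connectible to the generic fibre'' does not match how the $1$-ghost homotopy is actually built: the elementary Nisnevich (in fact Zariski) cover of $\A^1_U$ in the proof of Theorem~\ref{theorem nodal case} is chosen so that one chart maps out to the branch while the other performs a ``correction'' over the complement of $\mathcal Z(r)$, and the compatibility condition on the overlap is exactly where the membership $\delta \in \sqrt{\langle r, r_0/r\rangle}$ is used. Finally, the reduction from an arbitrary blowup of $\P^1_C$ to nodal blowups is an essential part of the proof that your sketch omits: it proceeds by induction on the maximal number of infinitely near points blown up, using the \'etale cover of Theorem~\ref{theorem etale cover} (obtained by adjoining a $b$-th root of the uniformizer via $x \mapsto x^b$ and resolving the indeterminacy of $x^a/y$) to reduce the ``off-node'' case to a surface with strictly smaller blowup depth. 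Without the ghost-homotopy formalism, the rigidity proposition, the explicit $1$-ghost construction, and the \'etale-cover induction, the proposal cannot close the gaps it identifies.
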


It was expected in \cite[Remark 1.13]{Morel} that for a smooth projective surface $X$ birationally ruled over a curve $C$ of genus $g>0$, one would have $\pi_0^{\A^1}(X) \simeq C$.  Indeed, by results of \cite{Asok-Morel} or \cite{Balwe-Hogadi-Sawant}, it follows that $\pi_0^{\A^1}(X)(\Spec F) \simeq C(\Spec F)$, for every finitely generated, separable field extension $F$ of $k$.  However, Theorem \ref{theorem ruled iterations of S} shows that the situation in reality is quite delicate. 

We now briefly outline the contents of the article.  The central tool of the paper is the notion of \emph{$\A^1$-ghost homotopies}, which need not be morphisms of $\A^1$ into the scheme in question but are defined only over a Nisnevich cover of $\A^1$.  The formalism of $\A^1$-$n$-ghost homotopies, which gives a systematic way to analyze $\A^1$-homotopy classes of sections of the sheaves $\sS^n(X)$ over a smooth henselian local scheme, is described in Section \ref{subsection ghost homotopies}.  In Section \ref{section homotopies on a blowup}, we prove a key rigidity result (Proposition \ref{proposition local generator ghost homotopy}) for sections of a variety admitting a  morphism to an $\A^1$-rigid scheme over a smooth henselian local scheme.  Local analysis of $\A^1$-homotopy classes on ruled surfaces is crucial to our arguments.  We collect the tools needed for the same in Sections \ref{subsection constructions} and \ref{subsection nodal blowups of ruled surfaces}.  In Section \ref{section nodal blowups}, we prove Theorem \ref{theorem ruled iterations of S} for a special class of blowups of $\P^1 \times C$ for a smooth projective $\A^1$-rigid curve $C$, which we refer to as \emph{nodal blowups}.  The trickiest part of the proof here is the explicit construction of certain ghost homotopies (see the proof of Theorem \ref{theorem nodal case}).  In Section \ref{section general case}, we handle the case of a general blowup of $\P^1 \times C$.  This will be accomplished by a key reduction argument using a geometric result presented in Section \ref{subsection etale cover}, which allows us to prove the result by induction on the number of blowups required.  We finally put all these results together, and using some known results on rational surfaces, we obtain a proof of Theorem \ref{main theorem}.

\subsection*{Conventions and notation}

Throughout the paper, $k$ will denote a fixed base field and $Sm/k$ will denote the category of smooth, finite-type, separated schemes over $\Spec k$.  Section \ref{subsection nodal blowups of ruled surfaces} onwards, we will assume that $k$ is algebraically closed of characteristic $0$.  We will denote by $\@H(k)$ the $\A^1$-homotopy category constructed by Morel and Voevodsky \cite{Morel-Voevodsky}, the objects of which are simplicial Nisnevich sheaves of sets on $Sm/k$ (also called \emph{spaces}).  Regarding $\A^1$-connected components and related notions, we follow the conventions and notation of \cite{Balwe-Hogadi-Sawant}.

We will often make use of essentially smooth schemes, that is, schemes which are filtered inverse limits of diagrams of smooth schemes in which the transition maps are \'etale, affine morphisms.  All presheaves on $Sm/k$ will be extended to essentially smooth schemes by defining $\@F(\ilim U_{\alpha}) = \dlim \@F(U_\alpha)$.  By a \emph{smooth henselian local scheme} over $k$, we will mean the henselization of the local ring at a smooth point of a scheme over $k$.

A Nisnevich sheaf of sets $\@F$ on $Sm/k$ is \emph{$\A^1$-invariant} if for every $U \in Sm/k$, the projection map $U \times \A^1 \to U$ induces a bijection $\@F(U) \stackrel{\simeq}{\to} \@F(U \times \A^1)$.  A scheme $X$ over $k$ is called \emph{$\A^1$-rigid} if the associated Nisnevich sheaf given by its functor of points is $\A^1$-invariant.

\section{\texorpdfstring{$\A^1$}{A1}-connected components of schemes}

\subsection{Preliminaries on \texorpdfstring{$\A^1$}{A1}-connected components}
\label{subsection preliminaries}

In this section, we review some preliminary material on $\#A^1$-homotopy theory relevant to the contents of this paper.  We will follow the notation and terminology of \cite{Balwe-Hogadi-Sawant}.

\begin{definition}
\label{definition pi0}
The sheaf of \emph{$\A^1$-connected components} of a space $\@X$, denoted by $\pi_0^{\#A^1}(\@X)$, is defined to be the Nisnevich sheafification of the presheaf $$U \mapsto \Hom_{\@H(k)}(U, \@X)$$ on $Sm/k$.
\end{definition}

When $\@X$ is represented by a scheme $X$, we would like to use geometric properties of $X$ to compute $\pi_0^{\#A^1}(X)$.  A systematic way to go about this is to consider the functor of \emph{$\#A^1$-chain connected components} of $X$, which attempts to capture the notion of $\#A^1$-connectivity in a naive manner.

\begin{notation}
For any scheme $U$ over $k$, we let $\sigma_0$ and $\sigma_1$ denote the morphisms $U \to U \times \#A^1$ given by $u \mapsto (u,0)$ and $u \mapsto (u,1)$, respectively. 
\end{notation}

\begin{definition}
\label{definition homotopy}
Let $\@F$ be a sheaf of sets over $Sm/k$ and let $U$ be an essentially smooth scheme over $k$.
\begin{itemize}
\item[(1)] An \emph{$\#A^1$-homotopy} of $U$ in $\@F$ is an element $h$ of $\@F(U \times \#A^1_{k})$. We say that $t_1, t_2 \in \@F(U)$ are \emph{$\#A^1$-homotopic} if there exists an $\#A^1$-homotopy $h \in \@F(U \times \#A^1)$ such that $\sigma_0^*(h) = t_1$ and $\sigma_1^*(h)= t_2$.
\item[(2)] An \emph{$\#A^1$-chain homotopy} of $U$ in $\@F$ is a finite sequence $h=(h_1, \ldots, h_r)$ where each $h_i$ is an $\#A^1$-homotopy of $U$ in $\@F$ such that $\sigma_1^*(h_i) = \sigma_0^*(h_{i+1})$ for $1 \leq i \leq r-1$. We say that $t_1, t_2 \in \@F(U)$ are \emph{$\#A^1$-chain homotopic} if there exists an $\#A^1$-chain homotopy $h=(h_1, \ldots, h_r)$ such that $\sigma_0^*(h_1) = t_1$ and $\sigma_1^*(h_r) = t_2$.
\end{itemize} 
\end{definition}

Note that for any scheme $U$ and sheaf $\@F$, $\#A^1$-chain homotopy gives an equivalence relation on the set $\@F(U)$. 

\begin{definition} 
\label{definition A1 chain connected components}
Let $\@F$ be a Nisnevich sheaf of sets on $Sm/k$. Define $\@S(\@F)$ to be the Nisnevich sheaf associated with the presheaf 
\[
U \mapsto \@F(U)/\sim
\] 
\noindent for $U \in Sm/k$, where $\sim$ denotes the equivalence relation defined by $\#A^1$-chain homotopy.  In other words, $\@S(\@F)$ is the Nisnevich sheafification of the presheaf $U \mapsto \pi_0(\Sing \@F)$ on $Sm/k$, where $\Sing \@F$ denotes the \emph{Morel-Voevodsky singular construction} on $\@F$ (see \cite[p. 87-88]{Morel-Voevodsky}). 
\end{definition}

Iterating this construction, we obtain a sequence of epimorphisms
\[
\mathcal F \to \mathcal S(\mathcal F) \to \mathcal S^2(\mathcal F) \to \cdots \to \mathcal S^n(\mathcal F) \to \cdots,
\]
where $\mathcal S^{n+1}(\mathcal F)$ is defined inductively to be $\mathcal S(\mathcal S^{n}(\mathcal F))$, for every $n \in \mathbb N$.  The utility of this concept lies in the following result which was proved in \cite[Theorem 2.13, Remark 2.15, Corollary 2.18]{Balwe-Hogadi-Sawant}.

\begin{theorem}
\label{theorem BHS iterations of S}
For any sheaf of sets $\@F$ on $Sm/k$, the sheaf $$\@L(\@F) := \underset{n}{\varinjlim}~ \@S^n(\@F)$$ is $\#A^1$-invariant. Thus, the quotient morphism $\@F \to \@L(\@F)$ uniquely factors through the canonical morphism $\@F \to \pi_0^{\#A^1}(\@F)$. The morphism $\pi_0^{\#A^1}(\@F) \to \@L(\@F)$ is an isomorphism if and only if $\pi_0^{\#A^1}(\@F)$ is $\#A^1$-invariant. 
\end{theorem}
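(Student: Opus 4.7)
My plan is to prove the three assertions in order, with the main content lying in the $\A^1$-invariance of $\@L(\@F)$; the remaining two statements then follow formally. The key ingredient is the following observation (call it Claim A): for any Nisnevich sheaf of sets $\@G$ on $Sm/k$, any $U \in Sm/k$, and any section $\phi \in \@G(U \times \A^1)$, the sections $\phi$ and $p^*\sigma_0^*(\phi)$ become equal in $\@S(\@G)(U \times \A^1)$, where $p : U \times \A^1 \to U$ is the projection. To prove Claim A, I would consider the multiplication map $m : \A^1_x \times \A^1_t \to \A^1$, $(x, t) \mapsto xt$, and form the pullback
\[
h := (\mathrm{id}_U \times m)^*(\phi) \in \@G(U \times \A^1_x \times \A^1_t).
\]
Regarded as an $\A^1$-homotopy of sections over $U \times \A^1_x$ in the parameter $t$, the section $h$ specialises at $t = 0$ to $p^*\sigma_0^*(\phi)$ and at $t = 1$ to $\phi$, so these two sections are $\A^1$-homotopic in $\@G$ over $U \times \A^1_x$ and hence identified in $\@S(\@G)(U \times \A^1)$.

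From Claim A, $\A^1$-invariance of $\@L(\@F)$ is quick. Since $p \circ \sigma_0 = \mathrm{id}_U$, the map $p^* : \@L(\@F)(U) \to \@L(\@F)(U \times \A^1)$ is split by $\sigma_0^*$ and therefore injective; only surjectivity requires argument. A section $\phi \in \@L(\@F)(U \times \A^1)$ lifts, thanks to quasi-compactness of $U \times \A^1$ and coherence of the Nisnevich site, to some $\phi_n \in \@S^n(\@F)(U \times \A^1)$. Applying Claim A with $\@G = \@S^n(\@F)$ yields the identity $\phi_n = p^*\sigma_0^*(\phi_n)$ already in $\@S^{n+1}(\@F)(U \times \A^1)$, and this passes to the colimit to give $\phi = p^*\sigma_0^*(\phi)$ in $\@L(\@F)(U \times \A^1)$, as desired. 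The factorisation statement then follows by invoking Morel's result that a Nisnevich sheaf of sets is $\A^1$-local (viewed as a discrete simplicial sheaf) if and only if it is $\A^1$-invariant, combined with the universal property of $\pi_0^{\A^1}$: any morphism from $\@F$ into an $\A^1$-invariant sheaf of sets factors uniquely through $\@F \to \pi_0^{\A^1}(\@F)$, and we apply this with target $\@L(\@F)$.

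For the last equivalence, one direction is immediate, as $\pi_0^{\A^1}(\@F)$ inherits $\A^1$-invariance via an isomorphism to the $\A^1$-invariant sheaf $\@L(\@F)$. Conversely, if $\pi_0^{\A^1}(\@F)$ is already $\A^1$-invariant, the universal property used in the previous paragraph (now with target $\pi_0^{\A^1}(\@F)$) supplies a map $\@L(\@F) \to \pi_0^{\A^1}(\@F)$ compatible with the canonical surjections from $\@F$; uniqueness then forces its two composites with $\pi_0^{\A^1}(\@F) \to \@L(\@F)$ to be the respective identities. The only non-formal step in the proof is Claim A, and the conceptual crux is that, although the defect $\phi \neq p^*\sigma_0^*(\phi)$ is invisible at the level of a general sheaf $\@G$, a single application of $\@S$ absorbs it, so that iterating and passing to the filtered colimit converts these \emph{one-step} identifications into genuine $\A^1$-invariance.
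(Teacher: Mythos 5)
The paper does not give its own proof of this theorem; it cites \cite[Theorem 2.13, Remark 2.15, Corollary 2.18]{Balwe-Hogadi-Sawant}. Your argument is correct and reproduces the essential idea of the proof in that reference: the key contraction homotopy $h=(\mathrm{id}_U\times m)^*(\phi)$ obtained from the multiplication map $\A^1\times\A^1\to\A^1$, which shows that a single application of $\@S$ identifies any section over $U\times\A^1$ with the pullback of its restriction along $\sigma_0$, after which the filtered colimit is computed section-wise on noetherian schemes and the remaining statements follow formally.
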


Thus, in order to prove the $\#A^1$-invariance of $\pi_0^{\#A^1}(X)$ for a scheme $X$, one can attempt to compute iterations $\@S^n(X)$ for all $n$ and see if $\pi_0^{\#A^1}(X)$ agrees with their colimit as $n \to \infty$.  For this, we would like to study $\#A^1$-homotopies in the sheaves $\@S^n(X)$, $n \geq 0$. These sheaves are not schemes in general and so if we wish to exploit geometric information, we will have to ``lift'' those homotopies to $X$.  We take up the task of setting up the notation for doing the same in the next subsection. 

\subsection{Ghost homotopies}
\label{subsection ghost homotopies}

Let $\@F$ be a Nisnevich sheaf of sets on $Sm/k$.  The morphism $\@F \to \@S^n(\@F)$ is a surjection of sheaves. Thus, although not every morphism $U \to \@S(\@F)$ can be lifted to $\@F$, there exists some Nisnevich cover $V \to U$ such that the morphism $V \to U \to \@S^n(X)$ lifts to $\@F$. On $V \times_U V$, these lifts must be compatible up to $\#A^1$-chain homotopy.  In other words, the two induced morphisms of $V \times_U V$ are $\#A^1$-chain homotopic (at least after pulling back to some suitable Nisnevich cover of  $V \times_U V$). In other words, a morphism $U \to \@S(\@F)$ can be represented by data which consists of morphisms and homotopies into $\@F$.  We now state a criterion for two elements of $\@S(\@F)(U)$ to map to the same element in $\pi_0^{\A^1}(U)$, proved in \cite[Lemma 4.1]{Balwe-Hogadi-Sawant}, which will play a crucial role in our proof of Theorem \ref{main theorem}. 

\begin{lemma}
\label{lemma S^2 agrees with pi0}
Let $\sF$ be a Nisnevich sheaf of sets over $Sm/k$.  Let $U$ be a smooth scheme over $k$ and let $f,g: U \to \@F$ be two morphisms. Suppose that we are given data of the form
\[
 (\{p_i: V_i \to \A^1_U\}_{i \in \{1,2\}}, \{\sigma_0,\sigma_1\}, \{h_1, h_2\}, h^W)
\]
where:
\begin{itemize}
 \item The two morphisms $\{p_i: V_i \to \A^1_U\}_{i=1,2}$ constitute an elementary Nisnevich cover.
 \item For $i \in \{0,1\}$, $\sigma_i$ is a morphism $U \to V_1 \coprod V_2$ such that $(p_1 \coprod p_2) \circ \sigma_i: U \to U \times \A^1$ is the closed embedding $U \times \{i\} \hookrightarrow U \times \A^1$,
 \item For $i \in \{1,2\}$, $h_i$ denotes a morphism $V_i \to \@F$ such that $(h_1 \coprod h_2) \circ \sigma_0 = f$ and $(h_1 \coprod h_2) \circ \sigma_1 = g$. 
 \item Let $W := V_1 \times_{\A^1_U}V_2$ and let $pr_i: W \to V_i$ denote the projection morphisms. Then $h^W = (h_1, \ldots, h_n)$ is a $\A^1$-chain homotopy connecting the two morphisms $h_i \circ pr_i: W  \to \@F$. 
\end{itemize}
Then, $f$ and $g$ map to the same element under the map $\@F(U) \to \pi_0^{\A^1}(\@F)(U)$.  
\end{lemma}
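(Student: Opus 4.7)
The plan is to exploit the definition of $\mathcal{S}(\mathcal{F})$ as the sheafification of the presheaf of $\mathbb{A}^1$-chain-homotopy classes in $\mathcal{F}$, and to assemble the given data into an honest $\mathbb{A}^1$-homotopy in $\mathcal{S}(\mathcal{F})$ between the images $\bar f$ and $\bar g$ of $f$ and $g$. Concretely, I will produce a section $H \in \mathcal{S}(\mathcal{F})(\mathbb{A}^1_U)$ whose restrictions to $U \times \{0\}$ and $U \times \{1\}$ are $\bar f$ and $\bar g$, respectively; such an $H$ will then force $f$ and $g$ to have the same image in $\pi_0^{\mathbb{A}^1}(\mathcal{F})(U)$.

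First I pass to $\mathcal{S}(\mathcal{F})$. Since the two morphisms $h_i \circ pr_i : W \to \mathcal{F}$ are $\mathbb{A}^1$-chain homotopic via $h^W$, their images agree as elements of $\mathcal{S}(\mathcal{F})(W)$, because chain-homotopic sections of $\mathcal{F}$ are already identified in the presheaf quotient $\mathcal{F}(W)/{\sim}$, hence \emph{a fortiori} after Nisnevich sheafification. The sheaf axiom for $\mathcal{S}(\mathcal{F})$ applied to the elementary Nisnevich cover $\{p_1 : V_1 \to \mathbb{A}^1_U,\, p_2 : V_2 \to \mathbb{A}^1_U\}$ (with intersection $W = V_1 \times_{\mathbb{A}^1_U} V_2$) then glues the sections $\bar h_1 \in \mathcal{S}(\mathcal{F})(V_1)$ and $\bar h_2 \in \mathcal{S}(\mathcal{F})(V_2)$ to a unique $H \in \mathcal{S}(\mathcal{F})(\mathbb{A}^1_U)$. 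Pulling back along $\sigma_0$ and $\sigma_1$, and using the identities $(p_1 \coprod p_2) \circ \sigma_\varepsilon = i_\varepsilon$ for the inclusions $i_\varepsilon : U \times \{\varepsilon\} \hookrightarrow \mathbb{A}^1_U$ together with $(h_1 \coprod h_2) \circ \sigma_0 = f$ and $(h_1 \coprod h_2) \circ \sigma_1 = g$, one finds $i_0^* H = \bar f$ and $i_1^* H = \bar g$.

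To conclude, I observe that the canonical morphism $\mathcal{F} \to \pi_0^{\mathbb{A}^1}(\mathcal{F})$ factors through $\mathcal{F} \to \mathcal{S}(\mathcal{F})$, since any two $\mathbb{A}^1$-chain-homotopic sections of $\mathcal{F}$ have the same image in $\pi_0^{\mathbb{A}^1}(\mathcal{F})$; moreover, any $\mathbb{A}^1$-homotopy in $\mathcal{S}(\mathcal{F})$ is killed by the induced map $\mathcal{S}(\mathcal{F}) \to \pi_0^{\mathbb{A}^1}(\mathcal{F})$ for the same reason (equivalently, $\mathcal{F} \to \mathcal{S}(\mathcal{F})$ is an $\mathbb{A}^1$-weak equivalence and induces an isomorphism on $\pi_0^{\mathbb{A}^1}$). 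Applying this observation to the $\mathbb{A}^1$-homotopy $H$ gives the desired equality $\bar f = \bar g$ in $\pi_0^{\mathbb{A}^1}(\mathcal{F})(U)$. The only slightly delicate step is the gluing of $H$, where one must verify that the two lifts genuinely coincide as sections of $\mathcal{S}(\mathcal{F})$ over $W$ and not merely after passing to a further refinement; this is precisely what is encoded in the hypothesis on $h^W$, so I expect no substantive obstacle beyond carefully unwinding the definitions.
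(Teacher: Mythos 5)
Your overall strategy is the natural one: glue $h_1$ and $h_2$ in $\mathcal S(\mathcal F)$ over the elementary Nisnevich cover to get a genuine $\A^1$-homotopy $H:\A^1_U \to \mathcal S(\mathcal F)$ with $i_0^*H = \bar f$, $i_1^*H = \bar g$, and then deduce that $f$ and $g$ have the same image in $\pi_0^{\A^1}(\mathcal F)(U)$. The paper only cites this lemma (from \cite[Lemma 4.1]{Balwe-Hogadi-Sawant}) rather than proving it, so I cannot compare against an explicit in-text argument, but your approach is the same one the cited result is built on. Your use of the descent condition for the elementary distinguished square is correct: for a Nisnevich sheaf, sections over $V_1$ and $V_2$ that agree over $W = V_1 \times_{\A^1_U} V_2$ do glue, and the hypothesis on $h^W$ is exactly what is needed to make $\overline h_1$ and $\overline h_2$ agree in $\mathcal S(\mathcal F)(W)$.

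The one statement you should not leave as written is the parenthetical claim that ``$\mathcal F \to \mathcal S(\mathcal F)$ is an $\A^1$-weak equivalence.'' It is not: the Morel--Voevodsky construction $\Sing\mathcal F$ is $\A^1$-weakly equivalent to $\mathcal F$, but $\mathcal S(\mathcal F)$ is the (sheafified) $\pi_0$ of $\Sing\mathcal F$, and the truncation $\Sing\mathcal F \to \pi_0(\Sing\mathcal F)$ discards all higher simplicial homotopy, so it is generally not an $\A^1$-weak equivalence. What is true, and what your argument actually requires, is only the weaker fact that $\mathcal F \to \mathcal S(\mathcal F)$ induces an isomorphism $\pi_0^{\A^1}(\mathcal F) \cong \pi_0^{\A^1}(\mathcal S(\mathcal F))$ (equivalently, $\pi_0^{\A^1}$ only depends on $\pi_0$ of the simplicial sheaf $\Sing\mathcal F$). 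With that isomorphism in hand, your final step is exactly right: for any sheaf $\mathcal G$, the canonical map $\mathcal G \to \pi_0^{\A^1}(\mathcal G)$ kills $\A^1$-homotopies, so applying this to $\mathcal G = \mathcal S(\mathcal F)$ and transporting along the isomorphism gives $\bar f = \bar g$ in $\pi_0^{\A^1}(\mathcal F)(U)$. If you replace the inaccurate ``$\A^1$-weak equivalence'' phrasing with an explicit appeal to this $\pi_0^{\A^1}$-invariance (which is established in \cite{Balwe-Hogadi-Sawant} as part of the results behind Theorem \ref{theorem BHS iterations of S}), the proof is complete.
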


We now set up the notation for general data of such type in order to represent homotopies in $\@S^n(\@F)$.  We will only need to do this for the cases $n = 1$ and $2$ and when $\@F$ is of the form $\@S^n(X)$ for a scheme $X$ but the inductive definition is easy enough to state in general.

\begin{definition}
\label{definition ghost homotopy}
Let $\@F$ be a sheaf of sets and let $U$ be an essentially smooth scheme over $k$. Let $n \geq 0$ be an integer. The notion of an \emph{$n$-ghost homotopy} and \emph{$n$-ghost chain homotopy} is defined as follows:
\begin{itemize}
 \item[(1)] A \emph{$0$-ghost homotopy} is the same as an $\#A^1$-homotopy as defined in Definition \ref{definition homotopy}. Similarly, a $0$-ghost chain homotopy is the same as an $\#A^1$-chain homotopy. 
 \item[(2)] Assuming that the notion of $m$-ghost homotopy and $m$-ghost chain homotopy has been defined for $m < n$, we define an $n$-ghost homotopy.  Given $t_1, t_2 \in \@F(U)$, an \emph{$n$-ghost homotopy} connecting $t_1, t_2$ consists of the data:
\[
\@H:=(V \to \#A^1_U, W \to V \times_{\#A^1_U} V, \~\sigma_0, \~\sigma_1, h, h^W )
\]
which is defined as follows:
\begin{itemize}
\item[(a)] $V \to \#A^1_U$ is a Nisnevich cover of $\#A^1_U$.
\item[(b)] For $i = 0,1$, $\~\sigma_i: U \to V$ is a lift of $\sigma_i: U \to \#A^1_U$. 
\item[(c)] $W \to V \times_{\#A^1_U} V$ is a Nisnevich cover of $V \times_{\#A^1_U} V$.  
\item[(d)] $h$ is a morphism $V \to \@F$ such that $h \circ \~{\sigma_i} = t_{i+1}$. 
\item[(e)] $h^W$ is an $(n-1)$-ghost chain homotopy connecting the two morphisms $W \to V\times_{\#A^1_U} V  \overset{pr_i}{\to} V \to \@F$ where $pr_1$ and $pr_2$ are the projections $V \times_{\#A^1_U} V \to V$. 
\end{itemize}
With this notation, we will also write $\@H(i) = t_{i+1}$ for $i = 0,1$. 
\item[(3)] Suppose the notion of an $n$-ghost homotopy has been defined. Then for elements $t_1, t_2 \in \@F(U)$, an \emph{$n$-ghost chain homotopy} connecting $t_1, t_2$ is a finite sequence $\@H=(\@H_1, \ldots, \@H_r)$ where each $\@H_i$ is an $n$-ghost homotopy of $U$ in $\@F$ such that $\@H_i(1) = \@H_{i+1}(0)$ for $1 \leq i \leq r-1$, $\@H_1(0) = t_1$ and $\@H_r(1) = t_2$.
\end{itemize}
\end{definition}

\begin{lemma}
Let $\@F$ be a sheaf of sets, let $U$ be an essentially smooth scheme over $k$ and let $n \geq 0$ be an integer. Let $t_1, t_2 \in \@F(U)$. If $t_1, t_2$ are $n$-ghost homotopic then their images in $\@S^n(\@F)(U)$ are $\#A^1$-homotopic. Conversely, if the images of $t_1, t_2$ are $\#A^1$-homotopic, there exists a Nisnevich cover $p: U^{\prime} \to U$ such that the images of $t_i \circ p$ in $\@S^{n}(\@F)(U)$ are $n$-ghost homotopic.
\end{lemma}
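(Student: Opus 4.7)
The plan is to proceed by induction on $n$, carrying along the obvious chain-version of the statement as well (that $n$-ghost chain homotopic in $\@F$ corresponds to $\A^1$-chain homotopic in $\@S^n(\@F)$ up to Nisnevich refinements), since the chain form is needed to handle the internal $h^W$ data. The base case $n = 0$ is immediate: $\@S^0(\@F) = \@F$ and a $0$-ghost (chain) homotopy is by definition an $\A^1$-(chain) homotopy.

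For the forward direction of the inductive step, given an $n$-ghost homotopy $\@H = (V, W, \tilde\sigma_0, \tilde\sigma_1, h, h^W)$, I would first post-compose $h$ with the canonical map to obtain $\bar h: V \to \@S^{n-1}(\@F)$. Applying the inductive hypothesis to the $(n-1)$-ghost chain homotopy $h^W$, the two projection compositions $W \rightrightarrows V \to \@S^{n-1}(\@F)$ become $\A^1$-chain homotopic, so they coincide as morphisms into $\@S^n(\@F) = \@S(\@S^{n-1}(\@F))$. Because $W \to V \times_{\A^1_U} V$ is a Nisnevich cover and $\@S^n(\@F)$ is a Nisnevich sheaf, this forces the two maps $V \times_{\A^1_U} V \rightrightarrows \@S^n(\@F)$ to agree, and descent along the Nisnevich cover $V \to \A^1_U$ produces a morphism $\bar H: \A^1_U \to \@S^n(\@F)$ whose restrictions along $\sigma_0, \sigma_1$ are the images of $t_1, t_2$ thanks to the conditions $h \circ \tilde\sigma_i = t_{i+1}$.

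For the converse, starting with an $\A^1$-homotopy $H \in \@S^n(\@F)(\A^1_U)$ with $\sigma_i^* H$ equal to the image of $t_{i+1}$, I would lift through the iterated sheafification in stages. First, pass to a Nisnevich cover $V \to \A^1_U$ on which $H$ lifts to a morphism $V \to \@S^{n-1}(\@F)$, and refine further using the epimorphism $\@F \surj \@S^{n-1}(\@F)$ to obtain $h: V \to \@F$. On $V \times_{\A^1_U} V$, the two pullbacks of $h$ have identical image in $\@S^n(\@F)$; unpacking the definition of $\@S^n = \@S(\@S^{n-1})$ as a sheafified chain-homotopy quotient, this means that after passing to a Nisnevich cover $W_0 \to V \times_{\A^1_U} V$ they are $\A^1$-chain homotopic in $\@S^{n-1}(\@F)(W_0)$. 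The chain-version inductive hypothesis then produces a further Nisnevich refinement $W \to W_0$ and an $(n-1)$-ghost chain homotopy $h^W$ between the two lifts into $\@F(W)$. Finally, passing to a Nisnevich cover $U' \to U$ allows me to lift $\sigma_0, \sigma_1$ to sections $\tilde\sigma_i: U' \to V$.

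The main obstacle is the endpoint condition $h \circ \tilde\sigma_i = t_{i+1}$, which a priori only holds up to an $(n-1)$-ghost chain homotopy in $\@F(U')$: the elements $h \circ \tilde\sigma_i$ and $t_{i+1}|_{U'}$ agree in $\@S^n(\@F)(U')$, so applying the inductive hypothesis on $U'$ yields only an $(n-1)$-ghost chain homotopy $K_i$ between them, not equality. To upgrade this to on-the-nose equality, I would enlarge $V$ by adjoining two components isomorphic to $U' \times \A^1$, mapped to $\A^1_{U'}$ as Nisnevich neighborhoods of $\sigma_i(U')$; redefine $h$ on these new components using the first constituent $\A^1$-homotopy of $K_i$; redefine $\tilde\sigma_i$ to land at the endpoint of the new component where $h$ equals $t_{i+1}$; and extend $h^W$ over the additional overlaps by concatenating the remaining pieces of $K_i$ with the original $h^W$. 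This rearrangement, though conceptually routine, is the combinatorially delicate heart of the proof and is where the bulk of the verification will go.
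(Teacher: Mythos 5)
Your forward direction is correct and is essentially the paper's argument; the paper reduces to the $n=1$ case applied to the auxiliary sheaf $\@S^{n-1}(\@F)$, while you carry the descent through in one step, but the content is the same. For the converse, you have put your finger on a genuine gap that the paper's proof seems to pass over: after choosing a lift $h \colon V \to \@F$ of $\bar H$ over a Nisnevich cover $V$ of $\A^1_U$ and lifting $\sigma_i$ to $\tilde\sigma_i \colon U' \to V$, there is no reason that $h \circ \tilde\sigma_i$ should equal $t_{i+1}|_{U'}$ in $\@F(U')$; their images in $\@S^n(\@F)(U')$ agree (both are $\sigma_i^*\bar H|_{U'}$), but $\@F(U') \to \@S^n(\@F)(U')$ is not injective in general. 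Since condition (d) of the definition of an $n$-ghost homotopy demands $h\circ\tilde\sigma_i = t_{i+1}$ on the nose, the paper's remark that ``it suffices to construct the Nisnevich cover $W$ and the $(n-1)$-ghost chain homotopy $\@H^W$'' leaves this unverified.

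Your proposed repair, however, does not work as described. A component adjoined to $V$ must be \'etale over $\A^1_{U'}$ and carry a morphism to $\@F$ whose two pullbacks to (covers of) the new fibre products with the old part of $V$ admit the required $(n-1)$-ghost chain homotopies. The constituents of $K_i$, though, live over the auxiliary homotopy copy of $\A^1$ (morphisms out of $\A^1_{U'}$ over $U'$), not \'etale over $\A^1_{U'}$; you are conflating the parameter direction $\A^1_U$ with the homotopy direction. Even if one reinterprets the first piece of $K_i$ along the parameter direction, the resulting morphism to $\@F$ is no longer a lift of $\bar H$, so there is no reason it should be $(n-1)$-ghost chain homotopic to the old $h$ over the new overlaps, and the datum $h^W$ cannot be completed. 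A clean resolution is to weaken the conclusion of the converse to ``$n$-ghost \emph{chain} homotopic'': any $(n-1)$-ghost (chain) homotopy is a degenerate case of an $n$-ghost (chain) homotopy (take trivial covers), so one concatenates $K_0$ (from $t_1|_{U'}$ to $h\circ\tilde\sigma_0$), the single $n$-ghost homotopy built from $h$, and $K_1$ (from $h\circ\tilde\sigma_1$ to $t_2|_{U'}$). This chain statement is all that the paper actually needs later, since the rigidity of pullback ideal sheaves in Section 3 is transitive along chains, so nothing downstream is affected.
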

\begin{proof}
We first prove that if $t_1$ and $t_2$ are $n$-ghost homotopic, then their images in $\@S^n(\@F)$ are $\#A^1$-homotopic. We prove this by induction on $n$. The case $n = 0$ is trivial. We verify the case $n = 1$.  Suppose we have a $1$-ghost homotopy 
\[
\@H:=(V \to \#A^1_U, W \to V \times_{\#A^1_U} V, \~\sigma_0, \~\sigma_1, h, h^W )
\]
connecting $t_1$ and $t_2$. Then for $i =1,2$, if $pr_i$ denotes the projection on the $i$-th factor, we see that the two morphisms 
\[
W \to  V \times_{\#A^1_U} V \stackrel{pr_i}{\to} V \stackrel{h}{\to} \@F \to \@S(\@F)
\]
are equal, by the definition of $\@S(\@F)$. Since $W \to V \times_{\#A^1_U} V$ is a Nisnevich cover, we see that the morphism $V \to \@F \to \@S(\@F)$ descends to a morphism $\#A^1_U \to \@F$, connecting the images of $t_1$ and $t_2$ in $\@S(\@F)$. 

Now assume that for any scheme $T$, if two morphisms $T \to \@F$ are $(n-1)$-ghost homotopic, then their images in $\@S^{n-1}(\@F)$ are $\#A^1$-homotopic. Suppose we are given an $n$-ghost homotopy
\[
\@H:=(V \to \#A^1_U, W \to V \times_{\#A^1_U} V, \~\sigma_0, \~\sigma_1, h, h^W )
\]
between $t_1,t_2 \in \@F(U)$.  By the induction hypothesis, on composing with the morphism $\@F \to \@S^{n-1}(\@F)$ we get a $1$-ghost homotopy of $U$ in $\@S^{n-1}(\@F)$. Since we know the result to be true for $n = 1$, we see that a $1$-ghost homotopy in $\@S^{n-1}(\@F)$ gives rise to an $\#A^1$-homotopy of $U$ in $\@S^n(\@F)$ connecting the images of $t_1$ and $t_2$ in $\@S^n(\@F)(U)$. 

Now we prove the converse, again by induction on $n$.  The case $n = 0$ is trivial. Suppose $t_1, t_2 \in \@F(U)$ are such that their images in $\@S^n(\@F)(U)$ are connected by a single $\#A^1$-homotopy. As $\@F \to \@S^n(\@F)$ is an epimorphism, there exists a Nisnevich cover $V \to \#A^1_U$ such that the morphism $V \to \#A^1_U \to \@S^n(\@F)$ can be lifted to a morphism $V \to \@F$. By replacing $U$ by some suitable Nisnevich cover $U' \to U$, we may assume that the morphisms $\sigma_i: U \to \#A^1_U$ lift to $V$. Now, in the notation of Definition \ref{definition ghost homotopy}, it suffices to construct the Nisnevich cover $W \to V \times_{\#A^1_U} V$ and the $(n-1)$-ghost chain homotopy $\@H^W$.

The two morphisms 
\[
V \times_{\#A^1_U} V \stackrel{pr_i}{\to} V \stackrel{h}{\to} \@F \to \@S^{n-1}(\@F)
\]
become equal when composed with the morphism $\@S^{n-1}(\@F) \to \@S^n(\@F)$. Thus, they must be compatible up to $\#A^1$-homotopy in $\@S^{n-1}(\@F)$, that is, there exists a Nisnevich cover $W \to V \times_{\#A^1_U} V$ such that the two morphisms 
\[
W \to  V \times_{\#A^1_U} V \stackrel{pr_i}{\to} V \stackrel{h}{\to} \@F \to \@S^{n-1}(\@F)
\]
are $\#A^1$-chain homotopic in $\@S^{n-1}(\@F)$.  Hence, there exists a finite sequence of homotopies $h^W:= (h_1, \ldots, h_r)$ in $\@S^{n-1}(\@F)$ such that $\sigma_1^*(h_i) = \sigma_0^*(h_{i+1})$ for $1 \leq i \leq r-1$ and such that $\sigma_0^*(h_1)$ and $\sigma_1^*(h_r)$ are morphisms $W \to \@S^{n-1}(\@F)$.  By replacing $W$ by some suitable Nisnevich cover, we may assume that for every $i$, $1 \leq i \leq r-1$, the morphisms $\sigma_1^*(h_i)$ lift to $\@F$.  Applying the induction hypothesis, we see that all these lifts are $(n-1)$-ghost homotopic. Thus, the two morphisms $W \to \@F$ are $(n-1)$-ghost chain homotopic. This completes our proof.
\end{proof}

In order to avoid being flooded by notation in some of the proofs, we need to introduce the notion of the ``total space'' of an $n$-ghost homotopy. For a given $n$-ghost homotopy $\@H$ of a scheme $U$ in a sheaf $\@F$, this is simply the union of all the schemes that show up in its definition. This is a scheme over $U$ and is equipped with a morphism into $\@F$ which is simply the coproduct of all the morphisms that come up in the definition of $\@H$. For the sake of precision, we write down the definition explicitly as follows:

\begin{definition}
\label{definition ghost homotopy total space}
Let $\@F$ be a sheaf of sets, let $U$ be a smooth scheme. For any $n$-ghost homotopy $\@H$ of $U$ in $\@F$, we define a scheme $\Sp(\@H)$ and morphisms $f_{\@H}: \Sp(\@H) \to U$ and $h_{\@H}: \Sp(\@H) \to \@F$. We do this by induction on $n$ as follows: 
\begin{itemize}
\item[(1)] For a $0$-ghost homotopy $\@H$ which is given by a morphism $\#A^1_U \to \@F$, we define $$\Sp(\@H) := \#A^1_U = \#A^1 \times U.$$ The morphism $f_{\@H}: \#A^1_U \to U$ is simply the canonical projection on $U$. The morphism $h_{\@H}: \#A^1_U \to \@F$ is just the morphism defining the homotopy.
\item[(2)] Suppose that this construction has been done for $(n-1)$-ghost homotopies. Let 
\[
\@H:=(V \to \#A^1_U, W \to V \times_{\#A^1_U} V, \~\sigma_0, \~\sigma_1, h, \@H^W )
\]
be an $n$-ghost homotopy where $\@H_W = (\@H_1, \ldots, \@H_r)$ is a $(n-1)$-ghost chain homotopy.  We then define 
\[
\Sp(\@H) := V \amalg \left(\coprod_{i=1}^r\Sp(\@H_i)\right).
\]
We will define the morphisms $f_{\@H}:\Sp(\@H) \to U$ and $h_{\@H}:\Sp(\@H) \to \@F$ are defined by specifying their restrictions to $V$ and to $\Sp(\@H_i)$ for all $i$.  We define $f_{\@H}|_V$ to be the composition of the given morphism $V \to \#A^1_U$ with the projection $\#A^1_U \to U$. The morphism $f_{\@H}|_{\Sp(\@H_i)}$ is defined to be the composition
\[
\Sp(\@H_i) \stackrel{f_{\@H_i}}{\to} W \to V \times_{\#A^1_U} V \stackrel{pr_i}{\to} V \to \#A^1_U \to U
\]
where $pr_i$ could be either of the projections $V \times_{\#A^1_U} V \to V$ (the composition clearly does not depend on this choice). The morphism $h_{\@H}:\Sp(\@H) \to \@F$ is defined by $h_{\@H}|_{V} = h$ and $h_{\@H}|_{\Sp(\@H_i)} = h_{\@H_i}$. 
\end{itemize}
\end{definition}

\subsection{Ghost homotopies respecting fibers}
\label{subsection ghost homotopies respecting fibers}

We now discuss $\A^1$-homotopies and $n$-ghost homotopies on sheaves that admit a morphism to an $\A^1$-invariant sheaf.  We will then specialize to the case when the target sheaf is represented by an $\A^1$-rigid scheme.

\begin{definition}
\label{definition homotopy respecting fibres}
Let $U$ be a scheme over $k$. Let $\phi: \mathcal F \to \mathcal G$ be a morphism of sheaves of sets on $Sm/k$.  
\begin{enumerate}[label=$(\arabic*)$]
\item We say that an $\mathbb A^1$-homotopy $h: U \times \mathbb A^1 \to \mathcal F$ \emph{respects fibers of $f$} if there exists a morphism $\gamma: U \to \mathcal G$ such that $\phi \circ h = \gamma \circ pr_1$ where $pr_1: U \times \mathbb A^1 \to U$ is the projection on the first factor. We say that \emph{$h$ lies over $\gamma$}.  
\item Let $\mathcal H:=(\mathcal V, \{\mathcal W^{ij}\}_{i,j}, \{h_i\}_i, \{h^{ijk}\}_{i,j,k})$ be an $\mathbb A^1$-ghost homotopy of $U$ in $\mathcal F$. We say that $\mathcal H$  \emph{respects fibres of $f$} if there exists a morphism $\gamma: U \to \mathcal G$ such that $\phi \circ h_{\mathcal H} = \gamma \circ f_{\mathcal H}$. We say that \emph{$\mathcal H$ lies over $\gamma$}.
\item In general, given a $U$-scheme $V$ and a morphism of sheaves $\@F \to \@G$, a morphism $V \to \@F$ will be called a \emph{$\gamma$-morphism} if the diagram
\[
\xymatrix{
V \ar[r] \ar[d] & \@F \ar[d] \\
U \ar[r]^-{\gamma}        & \@G
}
\]
commutes.
\end{enumerate}
\end{definition}

\begin{remark}
An $\mathbb A^1$-ghost homotopy of $U$ in $\mathcal F$ gives rise to an $\mathbb A^1$-homotopy of $U$ in $\mathcal S(\mathcal F)$. However, for an $\mathbb A^1$-rigid sheaf $\mathcal G$, to say that an $\mathbb A^1$-ghost homotopy respects fibers of $\mathcal F \to \mathcal G$ is, in general, stronger than saying that the corresponding $\mathbb A^1$-homotopy in $\mathcal S(\mathcal F)$ respects the fibers of $\mathcal S(\mathcal F) \to \mathcal G$. For instance, if $\mathcal H$ is an $\mathbb A^1$-ghost homotopy of $U$ in $\mathcal F$ such that the corresponding $\mathbb A^1$-homotopy of $U$ in $\mathcal S(\mathcal F)$ respects the fibers of the morphism $\mathcal S(\mathcal F) \to \mathcal G$, it need not be true that $\phi \circ h_{\mathcal H}  = \gamma \circ f_{\mathcal H}$ on every copy of $W^{ij}_k \times \mathbb A^1$ that occurs in $\Sp(\mathcal H)$. 
\end{remark}

\begin{lemma}
\label{lemma strategy ruled}
Let $\phi: \mathcal F \to \mathcal G$ be a morphism of sheaves of sets on $Sm/k$.  Assume that $\mathcal G$ is $\mathbb A^1$-invariant. Let $U$ be any scheme.  Then any $\mathbb A^1$-homotopy (resp. $n$-ghost homotopy) of $U$ in $\mathcal F$ respects fibers of $\phi$. Thus there exists a morphism $\gamma: U \to \mathcal G$ such that the given $\mathbb A^1$-homotopy (resp. $n$-ghost homotopy) of $U$ factors through $\mathcal F \times_{\mathcal G,\gamma} U \to \mathcal F$. 
\end{lemma}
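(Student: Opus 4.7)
The plan is to induct on $n$, with the $\A^1$-homotopy case serving as the base step.

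For the base case, let $h: U \times \A^1 \to \mathcal F$ be an $\A^1$-homotopy. Then $\phi \circ h \in \mathcal G(U \times \A^1)$, and since $\mathcal G$ is $\A^1$-invariant, the bijection $\mathcal G(U) \xrightarrow{\simeq} \mathcal G(U \times \A^1)$ supplies a unique $\gamma: U \to \mathcal G$ with $\phi \circ h = \gamma \circ pr_1$, which is exactly the required factorization through $\mathcal F \times_{\mathcal G, \gamma} U$.

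For the inductive step, consider an $n$-ghost homotopy
\[
\mathcal H = (V \to \A^1_U,\ W \to V \times_{\A^1_U} V,\ \tilde\sigma_0,\ \tilde\sigma_1,\ h,\ \mathcal H^W),
\]
where $\mathcal H^W = (\mathcal H_1, \ldots, \mathcal H_r)$ is an $(n-1)$-ghost chain homotopy connecting the two compositions $W \to V \times_{\A^1_U} V \to V \xrightarrow{h} \mathcal F$ coming from the projections of $V \times_{\A^1_U} V$. The inductive hypothesis applied to each $\mathcal H_i$, viewed as an $(n-1)$-ghost homotopy of $W$ in $\mathcal F$, yields morphisms $\delta_i: W \to \mathcal G$ with $\phi \circ h_{\mathcal H_i} = \delta_i \circ f_{\mathcal H_i}$. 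Unwinding Definitions \ref{definition ghost homotopy} and \ref{definition ghost homotopy total space} shows that $\phi \circ \mathcal H_i(0) = \delta_i = \phi \circ \mathcal H_i(1)$ as morphisms $W \to \mathcal G$, so the chain condition $\mathcal H_i(1) = \mathcal H_{i+1}(0)$ forces the $\delta_i$ to agree, yielding a single $\delta: W \to \mathcal G$ satisfying $\delta = \phi \circ h \circ pr_j|_W$ for both $j = 1, 2$.

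Since $W \to V \times_{\A^1_U} V$ is a Nisnevich cover, the two compositions $\phi \circ h \circ pr_j: V \times_{\A^1_U} V \to \mathcal G$ must already coincide, and then the sheaf property of $\mathcal G$ along the Nisnevich cover $V \to \A^1_U$ descends $\phi \circ h: V \to \mathcal G$ uniquely to some $\tilde\gamma: \A^1_U \to \mathcal G$. A final application of $\A^1$-invariance of $\mathcal G$ then produces the desired $\gamma: U \to \mathcal G$ with $\tilde\gamma = \gamma \circ pr_1$. The identity $\phi \circ h_{\mathcal H} = \gamma \circ f_{\mathcal H}$ is verified componentwise on $\Sp(\mathcal H) = V \amalg \coprod_{i=1}^r \Sp(\mathcal H_i)$: on $V$ it is the construction of $\tilde\gamma$, while on each $\Sp(\mathcal H_i)$ it follows from $\phi \circ h_{\mathcal H_i} = \delta \circ f_{\mathcal H_i}$ together with $\delta = \gamma \circ (W \to V \to \A^1_U \to U)$.

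The main obstacle is ensuring that the morphisms $\delta_i$ furnished by the inductive hypothesis all coincide. This rests on the identity $\phi \circ \mathcal H_i(0) = \delta_i$, which is immediate once one notes that $\mathcal H_i(0) = h_{\mathcal H_i} \circ \tilde\sigma_0^{(\mathcal H_i)}$ and that $f_{\mathcal H_i} \circ \tilde\sigma_0^{(\mathcal H_i)}$ is the identity of $W$ (since $\tilde\sigma_0^{(\mathcal H_i)}$ is a lift of the zero section of $\A^1_W$); the argument at the other endpoint is symmetric. Everything else is formal bookkeeping.
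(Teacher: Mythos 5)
Your proof is correct and follows the same overall strategy as the paper (induction on $n$, with $\A^1$-invariance forcing homotopies into $\mathcal G$ to be constant, the chain condition forcing the various constants to agree, and a componentwise check on $\Sp(\mathcal H)$). The one mechanical difference is how $\gamma$ is extracted: the paper first observes that the $n$-ghost homotopy represents an ordinary $\A^1$-homotopy $U\times\A^1\to\mathcal S(\mathcal F)$, composes this with the induced map $\mathcal S(\mathcal F)\to\mathcal G$ (which exists since $\mathcal G$ is $\A^1$-invariant), and applies the base case to that homotopy; you instead assemble $\gamma$ bottom-up, descending from $W$ to $V\times_{\A^1_U}V$ to $\A^1_U$ via the Nisnevich sheaf axiom for $\mathcal G$ and then to $U$ by $\A^1$-invariance. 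Your route has the advantage of making the inductive step entirely explicit (the paper only sketches it with ``proceeding inductively''), at the cost of a slightly longer descent argument; the paper's route through $\mathcal S(\mathcal F)$ produces $\gamma$ in one stroke but leaves the full induction to the reader. Both are complete and correct.
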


\begin{proof}
Let $h: U \times \mathbb A^1 \to \mathcal F$ be an $\mathbb A^1$-homotopy of $U$ in $\mathcal F$. Since $\mathcal G$ is $\mathbb A^1$-invariant, the $\mathbb A^1$-homotopy $f \circ h$ is constant, that is, there exists a morphism $\gamma: U \to \mathcal G$ such that $\phi \circ h = \gamma \circ pr_1$. This proves the lemma for $\mathbb A^1$-homotopies. 

Now, suppose we have an $\mathbb A^1$-ghost homotopy 
\[
\mathcal H:=(\mathcal V, \{\mathcal W^{ij}\}_{i,j}, \{h_i\}_i, \{h^{ijk}\}_{i,j,k})
\]
of $U$ in $\mathcal F$ which represents an $\mathbb A^1$-homotopy $h$ of $U$ in $\mathcal S(\mathcal F)$. Since we have proved the result for $\mathbb A^1$-homotopies, the fact that $\mathcal G$ is $\mathbb A^1$-invariant implies that there exists a $\gamma \in \mathcal G(U)$ giving a commutative diagram
\[
\xymatrix{
U \times \mathbb A^1 \ar[r]^{h} \ar[d]_{pr_1} & \mathcal S(\mathcal F) \ar[d] \\
U \ar[r]^{\gamma} & \mathcal G  \text{.}
}
\]
Combining this with the commutative diagram 
\[
\xymatrix{
V_i \ar[r]^{h_i} \ar[d] & \mathcal F \ar[d]  \\
U \times \mathbb A^1  \ar[r]^h & \mathcal S(\mathcal F)
}
\]
we obtain a proof of the equality $\phi \circ h_{\mathcal H} = \gamma \circ f_{\mathcal H}$ when the morphisms are restricted to $V_i \subset\Sp(\mathcal H)$ for $i \in I$. 

Let $i,j \in I$ and $k \in K_{ij}$. Since $\mathcal G$ is $\mathbb A^1$-invariant, for every $\mathbb A^1$-homotopy $h^{ijk}_l: W^{ij}_k \times \mathbb A^1 \to \mathcal F$ occurring in $h^{ijk}$, there exists a morphism $\gamma_l: W^{ij}_k \to \mathcal G$ such that the square 
\[
\xymatrix{
W^{ij}_k \times \mathbb A^1 \ar[r]^-{pr_1} \ar[d]_-{h_{l}^{ijk}} & W^{ij}_k \ar[d]^-{\gamma_l} \\
\mathcal F \ar[r]^-{\phi} & \mathcal G 
}
\]
is commutative. We need to show that $\gamma_l$ is the same as the morphism 
\[
W^{ij}_k \to U \times \mathbb A^1 \overset{pr_1}{\to} U \overset{\gamma}{\to} \mathcal G.
\]
For $t=0,1$, we have $\phi \circ h^{ijk}_l\circ\sigma_t =  \gamma_l \circ pr_1 \circ \sigma_t = \gamma_l$. However, by the definition of an $\mathbb A^1$-chain homotopy, we have $h^{ijk}_l \circ \sigma_0 = h^{ijk}_{l-1} \circ \sigma_1$ and $h^{ijk}_l \circ \sigma_1 = h^{ijk}_{l+1} \circ \sigma_0$. Thus we see that the $\gamma_l = \gamma_{1}$ for every $l$. However, $\gamma_1$ is equal to the composition of morphism
\[
W^{ij}_k \overset{\sigma_0}{\to} W^{ij}_k \times \mathbb A^1 \overset{h^{ijk}_l}{\to} \mathcal F \to \mathcal G
\]
which, in turn, is equal to the composition 
\[
W^{ij}_k \to V_i \times_{\mathbb A^1_U} V_j \overset{pr_1}{\to} V_i \overset{h_i}{\to} \mathcal F \overset{\phi}{\to} \mathcal G \text{.}
\]
Since the morphism $\phi \circ h_i$ is the same as the composition 
\[
V_i \to U \times \mathbb A^1 \overset{pr_1}{\to} U \overset{\gamma}{\to} \mathcal G \text{,}
\]
we see that $\gamma_1$ is equal to the composition
\[
W^{ij}_k \to V_{i} \times_{\mathbb A^1_U} V_j \overset{pr_1}{\to} V_i \to U \times \mathbb A^1 \overset{pr_1}{\to} U \overset{\gamma}{\to} \mathcal G
\]
as desired.   Proceeding inductively, we can prove the result for $n$-ghost homotopies.
\end{proof}

\begin{proposition}
\label{proposition P1 bundle over an A1-rigid scheme}
Let $B$ be a smooth projective $\A^1$-rigid scheme over a field $k$.
\begin{enumerate}[label=$(\alph*)$]
\item If $E \to B$ is the projectivization of a rank $2$ vector bundle on $B$, then $\mathcal S(E) = \pi_0^{\mathbb A^1}(E) = B$.

\item If $E \to B$ is a $\mathbb P^1$-bundle, $B$ is of dimension $1$ and $k$ is algebraically closed of characteristic $0$, then $\mathcal S(E) = \pi_0^{\mathbb A^1}(E) = B$.
\end{enumerate}
\end{proposition}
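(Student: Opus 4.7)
The plan is to reduce $(b)$ to $(a)$ by a Brauer-group argument, and then to prove $(a)$ by a fiberwise analysis over the $\A^1$-rigid base $B$, making essential use of Lemma~\ref{lemma strategy ruled}.

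For the reduction $(b)\Rightarrow(a)$: every $\P^1$-bundle over $C$ is a $PGL_2$-torsor, and the short exact sequence $1 \to \G_m \to GL_2 \to PGL_2 \to 1$ attaches to it an obstruction class in the Brauer group $\mathrm{Br}(C) = H^2_{\text{\'et}}(C, \G_m)$ to being the projectivization of a rank $2$ vector bundle. By Tsen's theorem, $\mathrm{Br}(C) = 0$ for a smooth projective curve over an algebraically closed field, so the obstruction vanishes and every $\P^1$-bundle over $C$ is of the form $\P(V)$.

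For $(a)$, write $E = \P(V)$ with $V$ a rank $2$ vector bundle on $B$. The key observation is that for any smooth henselian local scheme $U$ and any $\gamma : U \to B$, the pullback $\gamma^* V$ is free of rank $2$ (vector bundles on local rings are free), so the fiber $E_\gamma := E \times_{B,\gamma} U$ is isomorphic to $\P^1 \times U$; in particular $E(U) \to B(U)$ is surjective. Since $B$ is $\A^1$-invariant, Lemma~\ref{lemma strategy ruled} ensures that every $\A^1$-homotopy and every $n$-ghost homotopy in $E$ respects the fibers of $E \to B$, so the study of $\mathcal S(E)(U)$ and of $\pi_0^{\A^1}(E)(U)$ relative to a fixed $\gamma$ reduces to the same question inside $\P^1_U$. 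I would then verify the elementary fact that any two $U$-points of $\P^1_U$ are naively $\A^1$-chain homotopic inside $\P^1_U$: since $\mathcal O(U)$ is local, any section $[a:b]$ has $a$ or $b$ a unit, so (say when $b$ is a unit) one connects $[a/b:1]$ to $[0:1]$ via $t \mapsto [(1-t)a/b : 1]$ and then $[0:1]$ to $[1:0]$ via $t \mapsto [t : 1-t]$, both manifestly well-defined morphisms $U \times \A^1 \to \P^1_U$. Hence $\mathcal S(\P^1_U)(U)$ is a singleton, and combined with surjectivity of $E(U) \to B(U)$ and the sheaf property, this yields $\mathcal S(E) \simeq B$ as Nisnevich sheaves. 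Since $B$ is $\A^1$-rigid, iterating $\mathcal S$ stabilizes immediately and $\mathcal L(E) \simeq B$.

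Upgrading $\mathcal S(E) \simeq B$ to $\pi_0^{\A^1}(E) \simeq B$ is the delicate step, and is precisely where I expect the main obstacle to lie. I would apply Lemma~\ref{lemma strategy ruled} to arbitrary $n$-ghost homotopies: any such homotopy $\mathcal H$ in $E$ lying over a $\gamma : U \to B$ factors through its total space $\Sp(\mathcal H)$, which maps into $E_\gamma \cong \P^1_U$, and the explicit chain constructed above shows that any two $\gamma$-lifts $U \to E$ are already $\A^1$-chain homotopic in $\P^1_U$, hence identified in $\pi_0^{\A^1}(E)(U)$. This would yield $\pi_0^{\A^1}(E)(U) = B(U)$ at every smooth henselian local stalk, and therefore $\pi_0^{\A^1}(E) \simeq B$ globally. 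Theorem~\ref{theorem BHS iterations of S} alone would only yield $\pi_0^{\A^1}(E) \simeq \mathcal L(E)$ conditionally on the $\A^1$-invariance of $\pi_0^{\A^1}(E)$; the ghost-homotopy formalism of Section~\ref{subsection ghost homotopies} is designed precisely to bridge the gap between naive $\A^1$-chain homotopies and abstract $\A^1$-homotopies in $\@H(k)$, and it is essential here.
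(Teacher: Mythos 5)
Your proof is correct and follows essentially the same strategy as the paper: fix a henselian local $U$ and a morphism $\gamma : U \to B$, pull back to $E_\gamma \simeq \P^1_U$, invoke Lemma~\ref{lemma strategy ruled} (which uses $\A^1$-rigidity of $B$) to restrict homotopies and $n$-ghost homotopies to fibers, and observe that any two sections of $\P^1_U \to U$ are naively $\A^1$-chain homotopic. The one genuine difference is in how $(b)$ is reduced to $(a)$: the paper simply cites the Noether--Enriques theorem (\cite[Theorem 3.4]{Beauville}) to say that a $\P^1$-bundle over a curve is the projectivization of a rank $2$ bundle, while you rederive this via the exact sequence $1 \to \G_m \to GL_2 \to PGL_2 \to 1$ and Tsen's theorem; these are the same fact, so this is a matter of citation versus proof sketch. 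You also spell out the explicit chain of homotopies in $\P^1_U$ that the paper leaves implicit, which is helpful. One small caveat: your emphasis that ghost homotopies are ``essential'' to upgrade $\@S(E) \simeq B$ to $\pi_0^{\A^1}(E) \simeq B$ slightly overstates the case here---once $\@S(E) \simeq B$ is known and $B$ is $\A^1$-rigid, all higher $\@S^n(E)$ collapse to $B$ automatically, and the identification of $\pi_0^{\A^1}(E)(U)$ with $B(U)$ over a henselian local $U$ comes down to (i) surjectivity of $E(U) \to \pi_0^{\A^1}(E)(U)$ for henselian local $U$ (a fact from \cite{Balwe-Hogadi-Sawant}) together with (ii) the observation that $\A^1$-chain homotopic morphisms always have equal images in $\pi_0^{\A^1}$, rather than the full $n$-ghost machinery, which is really needed in Sections~\ref{section nodal blowups}--\ref{section general case} where the $\@S^n$ do not stabilize at $n=1$.
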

\begin{proof}
We fix a smooth henselian local ring $(R, \mathfrak{m})$, $U = \Spec R$. We fix a morphism $\gamma: U \to B$ and consider the pullback $E$ with respect to $\gamma$ which we denote by $E_{\gamma}$.  For part $(a)$, note that since $E_{\gamma} \simeq \mathbb P^1_U$ and $B$ is $\mathbb A^1$-rigid, by Lemma \ref{lemma strategy ruled}, it follows that $\mathcal S(E)(U) = B(U)$, and consequently,
\[
\mathcal S(E) = \pi_0^{\mathbb A^1}(E) \simeq B.
\] 
For part $(b)$, observe that a $\mathbb P^1$-bundle is \'etale locally trivial (see \cite[Theorem 3.4]{Beauville}).  Hence, $E_{\gamma} \simeq \mathbb P^1_U$ and we can conclude as in part $(a)$.
\end{proof}

The above proposition proves part (a) of Theorem \ref{theorem ruled iterations of S}. In order to prove part (b), we need to show that if $X$ is a smooth projective surface, birationally ruled over a curve $C$ of genus $>0$, then for any henselian local scheme $U$, essentially smooth over $k$, we have $\pi_0^{\#A^1}(X)(U) = \@S^n(X)(U)$ for all $n \geq 2$. Thus, we need to show that if $\alpha_1, \alpha_2: U \to X$ are two morphisms which are $n$-ghost homotopic, then they are actually $1$-ghost homotopic and also have the same image in $\pi_0^{\#A^1}(X)(U)$. The following lemma disposes of two cases in which this is trivially true:

\begin{lemma}
\label{lemma trivial case}
Let $X$ be a smooth projective surface, birationally ruled over a curve $C$ of genus $>0$. Let $U$ be a henselian local scheme, essentially smooth over $k$. Let $\gamma: U \to C$ be a morphism and let $\alpha_1, \alpha_2: U \to X$ be $\gamma$-morphisms which are $n$-ghost homotopic. Then $\alpha_1$ and $\alpha_2$ are $\#A^1$-chain homotopic if either of the following conditions hold:
\begin{itemize}
\item[(a)] $\gamma$ maps the generic point of $U$ to a closed point of $C$. 
\item[(b)] $\gamma$ maps the closed point of $U$ to the generic point of $C$. 
\end{itemize}
Hence, if either of these two conditions holds, $\alpha_1$ and $\alpha_2$ map to the same elements of $\pi_0^{\#A^1}(X)(U)$ and $\@S^n(X)(U)$ for any $n \geq 1$. 
\end{lemma}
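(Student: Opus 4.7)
The plan is to reduce both cases to the problem of $\A^1$-chain-connecting two morphisms from a henselian local scheme into a connected projective $k$-scheme whose irreducible components are all rational. Since $C$ has positive genus, any rational map from a smooth surface to $C$ extends to a morphism, so we have an honest morphism $f \colon X \to C$. Every fibre of $f$ is a connected reduced projective curve whose irreducible components are isomorphic to $\P^1$, as $X$ is obtained from a $\P^1$-bundle over $C$ by a sequence of blow-ups at smooth points. All the homotopies I will construct will live inside a single fibre of $f$, and hence inside $X$.

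For case (a), since $U$ is integral and $\gamma$ sends its generic point to the closed point $c \in C$, $\gamma$ factors through $\Spec \kappa(c) \hookrightarrow C$, so each $\alpha_i$ factors through $X_c := f^{-1}(c)$. The irreducible subset $\alpha_i(U) \subset X_c$ lies in a single component $Y_i \cong \P^1$. I will then pick an affine chart $A_i \cong \A^1$ of $Y_i$ containing $\alpha_i(u_0)$; henselian locality of $U$ forces $\alpha_i^{-1}(A_i)$ to be all of $U$, so $\alpha_i$ factors through $A_i$, and a linear $\A^1$-homotopy inside $A_i$ connects $\alpha_i$ to a constant morphism $\epsilon_i$ at some $k$-point $p_i$ of $Y_i$. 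To $\A^1$-chain-connect $\epsilon_1$ and $\epsilon_2$ inside $X$, I will use connectedness of $X_c$ to choose a chain $Y_1 = Z_0, Z_1, \dots, Z_r = Y_2$ of components in which consecutive ones meet at a point; on each $\P^1$-component any two $k$-points are joined by a two-step $\A^1$-chain homotopy via the standard cover $\P^1 = \A^1 \cup \A^1$, and concatenating these through the common intersection points produces the required $\A^1$-chain homotopy.

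Case (b) is similar. Since specialisations in $C$ can only go from the generic point to closed points, $\gamma(u_0) = \eta_C$ forces $\gamma$ to factor through $\Spec K(C) \to C$, and each $\alpha_i$ then factors through the generic fibre $X_{\eta_C}$, which is isomorphic to $\P^1_{K(C)}$ by the birational ruledness (any birational map between smooth projective curves over a field is an isomorphism). One routine check is that an $\A^1$-homotopy over $K(C)$ is equally an $\A^1$-homotopy over $k$, which holds because $\mathcal O(U) \otimes_k k[t] = \mathcal O(U) \otimes_{K(C)} K(C)[t]$ whenever $\mathcal O(U)$ is a $K(C)$-algebra (which it is here, via $\gamma$). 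The same affine-chart plus henselian-local argument applied to $\P^1_{K(C)}$ then yields the $\A^1$-chain homotopy. The final assertion about $\pi_0^{\A^1}(X)(U)$ and $\sS^n(X)(U)$ is then immediate, since $\A^1$-chain homotopic morphisms have equal image in $\sS(X)(U)$, and hence in every further quotient.

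The main obstacle I anticipate is purely bookkeeping: in case (a) one has to check that the chosen affine charts and chaining pieces assemble into genuine morphisms $U \times \A^1 \to X$ rather than merely into locally closed subschemes of $X_c$, which is automatic upon noting that $A_i \hookrightarrow Y_i \hookrightarrow X_c \hookrightarrow X$ is a composition of honest scheme morphisms. It is worth remarking that the hypothesis that $\alpha_1$ and $\alpha_2$ are $n$-ghost homotopic is not actually used in the argument: any two $\gamma$-morphisms $U \to X$ turn out to be $\A^1$-chain homotopic whenever $\gamma$ satisfies (a) or (b).
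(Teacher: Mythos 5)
Your proof is correct and takes essentially the same approach as the paper: in both cases (a) and (b) you observe that the $\gamma$-morphisms factor through a single fibre of $X \to C$ (over a closed point resp.\ the generic point), and then you $\A^1$-chain-connect any two sections of that fibre using that it is a connected chain of $\P^1$'s. The paper's proof is a one-liner making exactly this observation; your write-up supplies the details (factoring through affine charts using locality of $U$, linear homotopies to constants, chaining constants through intersection points), and your closing remark that the $n$-ghost homotopy hypothesis is never actually invoked is accurate and matches what the paper's terse proof implicitly does.
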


\begin{proof}
In both cases, there exists a point $c$ of $C$ such that $\alpha_1$ and $\alpha_2$ factor through the fiber $X_c:= X \times_{C,c} \Spec \kappa(c)$. Indeed, in case (a), $c$ is a closed point of $C$ and in case (b), it is the generic point of $C$. The result follows immediately from the fact that in both cases the fiber $X$ is connected and its components are isomorphic to $\#P^1_{\kappa(c)}$.  
\end{proof}

\begin{remark}
\label{remark trivial case}
Due to Lemma \ref{lemma trivial case}, we will primarily be concerned with the case in which $\gamma$ maps the generic point of $U$ to the generic point of $C$ and the closed point of $U$ to the closed point of $C$.  
\end{remark}

\section{Homotopies on a blowup}
\label{section homotopies on a blowup}

Let $X$ be a smooth projective surface, birationally ruled over a curve $C$ of genus $>0$. Let $E$ be a minimal surface such that there exists birational morphism $X \to E$. Thus $E$ is a $\#P^1$-bundle over $C$. Let $\pi$ denote the composition $X \to E \to C$, where the second map is the projection map of the $\#P^1$-bundle. Let $\~X$ be obtained from $X$ by blowing up a point $p$ and suppose that $\pi(p) = c$. 

Let $U$ be an essentially smooth local scheme of dimension $1$, with closed point $u$. Let $h: U \times \#A^1 \to X$ be a morphism which lifts to $\~X$. For $i = 0,1$ let $\alpha_i: U \to X$ be the morphism $h(-,i): U \to X$. As $C$ is rigid, we know from the above discussion, there exists a morphism $\gamma: U \to C$ such that $h$ is a $\gamma$-morphism. We assume that $\gamma$ maps $u$ to $c$ and the generic point of $U$ to the generic point of $C$.  Since $h$ lifts to $\~X$, the scheme-theoretic preimage $h^{-1}(p)$ is a locally principal divisor on $U \times \#A^1$. Also, it is supported on $\{u\} \times \#A^1$ which is an irreducible codimension $1$ subscheme of $U \times \#A^1$. Thus, it follows that the support of $h^{-1}(p)$ is either empty or it is equal to $\{u\} \times \#A^1$. Thus, if $\alpha_0$ maps $u$ to $p$, so does $\alpha_1$. In fact, in this case, the homotopy $h$ maps $u$ constantly to $p$. On the other hand, if $\alpha_0$ does not map $u$ to $p$, then both $\alpha_1$ and $h$ do not have $p$ in their images. 

In this section, we will strengthen the above observation and explore its consequences. We will prove that the homotopy $h$ in the above paragraph can be replaced by an $n$-ghost homotopy for any $n \geq 0$. The discussion in the previous paragraph easily shows that the scheme theoretic preimages $\alpha_0^{-1}(p)$ and $\alpha_1^{-1}(p)$ have the same support on $U$. We will obtain the stronger statement that these two preimages are actually the same. The proof of this statement is somewhat technical and will occupy Section \ref{subsection general result}. In Section \ref{subsection homotopies surfaces}, we will explore the consequences of the result in Section \ref{subsection general result} in the context of birationally ruled surfaces.
   
\subsection{A general result}

\label{subsection general result}

We fix the following setting for the rest of this subsection.

\begin{notation}
\begin{enumerate}[label=$(\arabic*)$]
\item Let $k$ be a field and let $B$ be an $\#A^1$-rigid scheme over $k$ and let $\pi:X \to B$ be a morphism of schemes over $k$. 

\item Let $\~X$ be the blowup of $X$ at $T$, where $T$ is a closed subscheme of $X$ such that $\pi(T)$ is not dense in $B$.

\item Let $U = \Spec R$ where $(R, \fr m)$ is the henselization of the local ring at a smooth point of a variety over $k$.  Let $K$ denote the residue field $R/\fr{m}$.  We will denote the closed point of $U$ by $u$.

\item For any morphism of schemes $f: Y \to Z$, we will abuse the notation and write $f^*$ for the associated morphism of sheaves of rings $\@O_Z \to f_*\@O_Y$ as well as for the morphism induced by this one on sections, when there is no confusion.
\end{enumerate}
\end{notation}

Let $\alpha_1, \alpha_2: U \to X$ be morphisms which lift to $\~X$. We also assume that the maps $\pi \circ \alpha_i$ do not map the generic point of $U$ into $\pi(T)$.  This condition is enough to ensure that the lifts of $\alpha_1, \alpha_2$ to $\~X$ are unique. The main result of this section is that, if the lifts $\~{\alpha}_1$ and $\~{\alpha}_2$ of $\alpha_1$ and $\alpha_2$ to $\~X$ are $n$-ghost homotopic for some $n$, then the intersection of the exceptional divisor with $\~{\alpha}_1(U)$ and $\~{\alpha}_2(U)$ is the ``same" in the sense that the pullbacks of the ideal sheaves defining $T$ to $U$ via $\alpha_1$ and $\alpha_2$ are the same.  For instance, this implies that if $\~{\alpha}_1$ maps the closed point of $U$ into the exceptional divisor, so does $\~{\alpha}_2$. Indeed, the image of the closed point of $U$ remains in the exceptional divisor throughout the $n$-ghost homotopy. 

We wish to study the sheaves $\@S^n(X)$. For any $n$-ghost homotopy $\@H$ of $U$ in $X$, the homotopy $\pi \circ \@H$ is constant by Lemma \ref{lemma strategy ruled}. Thus, if two morphisms $\alpha_1, \alpha_2: U \to X$ are $n$-ghost chain homotopic, the compositions $\pi \circ \alpha_1$ and $\pi \circ \alpha_2$ are equal. Thus, in order to study the $n$-ghost homotopy classes of morphisms $U \to X$, we may first fix a morphism $\gamma: U \to B$ and study the $n$-ghost homotopy classes of the sections of $X \times_{B,\gamma} U \to U$.  Therefore, we may take $B = U$ without loss of generality throughout the rest of the section. 

Let us thus assume that $X$ is a scheme of finite type over $U$, with structure map $\pi: X \to U$.  Let $T$ be a closed subscheme of $X$ such $\pi(T)$ does not contain the generic point of $U$.  Thus, there exists a proper ideal $I_0$ of $R$ such that if $\@I_0$ is the associated ideal sheaf on $U$, then $T$ is contained in the closed subscheme defined by the ideal sheaf $f^*(\@I_0)$. Let $\@I_T$ denote the ideal sheaf corresponding to $T$ and $\~X$ denote the blowup of $X$ at $T$.

\begin{lemma}
\label{lemma local generator}
Let $f: V \to U$ be a smooth morphism. Let $h: V \to X$ be a morphism over $U$ that lifts to $\~X$. Let $v$ be a point of $V$ such that $f(v) = u$ and $\kappa(v) = K$. Then the ideal $h^*(\@I_T)_v$ of $\mathcal O_{V,v}$ is generated by an element of the form $f^*(r)$ for some $r \in R$.  Moreover, we have $\<r\> \supset I_0$. 
\end{lemma}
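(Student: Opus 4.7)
The plan is to combine the universal property of the blowup with the UFD structure of the regular local ring $\mathcal O_{V,v}$. Since $h$ lifts to $\~X = \mathrm{Bl}_T X$, the pulled-back ideal $h^*(\@I_T)\cdot\mathcal O_V$ is invertible, and hence at $v$ its stalk is generated by a single element $g \in \mathcal O_{V,v}$. Two trivial cases are handled at once: if $g$ is a unit, take $r = 1$; and if $T = \emptyset$, then $\@I_T = \mathcal O_X$ forces $g$ to be a unit. Otherwise, $g$ is a proper non-unit and $T$ is nonempty; since $\pi(T)$ is not dense in $U$, by enlarging $I_0$ to $\mathrm{ann}_R(\mathcal O_X/\@I_T)$ if necessary (which still satisfies $\pi^*(I_0)\mathcal O_X \subset \@I_T$ and contains the original), we may assume $I_0 \neq 0$. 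Because $V \to U$ is smooth and $R$ is regular, $\mathcal O_{V,v}$ is a regular local ring, hence a UFD (Auslander--Buchsbaum), so we may factor $g = u \prod_i p_i^{a_i}$ with $u$ a unit and the $p_i$ distinct prime elements. Since $\pi^*(I_0)\mathcal O_X \subset \@I_T$ and $\pi \circ h = f$, we also have $f^*(I_0)\mathcal O_{V,v} \subset (g) \subset (p_i)$ for every $i$.

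The key step is to show that each $p_i$ equals, up to a unit, $f^*(\pi_i)$ for some height-one prime $(\pi_i) \subset R$. Set $\fr q_i := (p_i) \cap R$: it contains $I_0$ and is therefore nonzero, and by going-down for the flat extension $R \to \mathcal O_{V,v}$ one has $\mathrm{ht}(\fr q_i) \le \mathrm{ht}((p_i)) = 1$, whence $\mathrm{ht}(\fr q_i) = 1$. Because $R$ is a UFD, $\fr q_i = (\pi_i)$ is principal. Now $f^*(\pi_i) \in (p_i)$, and $(f^*\pi_i)$ is itself a height-one prime of $\mathcal O_{V,v}$: the quotient $\mathcal O_{V,v}/(f^*\pi_i)$ is the local ring at $v$ of $V \times_U \Spec(R/(\pi_i))$, which is smooth over the regular ring $R/(\pi_i)$ and therefore regular, in particular a domain. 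Two height-one primes with one containing the other must agree, so $(p_i) = (f^*\pi_i)$ and $p_i = v_i f^*(\pi_i)$ for some unit $v_i \in \mathcal O_{V,v}^{\times}$. Setting $r := \prod_i \pi_i^{a_i} \in R$ gives $g = u' f^*(r)$ with $u'$ a unit, so $(h^*\@I_T)_v = (f^*(r))$ as required. Finally, $(r) \supset I_0$ follows by contracting $(f^*(r)) \supset f^*(I_0)$ along the faithfully flat map $R \to \mathcal O_{V,v}$, using that $I \cdot \mathcal O_{V,v} \cap R = I$ for any ideal $I \subset R$.

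The main obstacle is the identification just described: every prime factor of $g$ is, up to a unit, pulled back from a height-one prime of $R$. The smoothness of $V \to U$ plays two essential roles here---it ensures both that $\mathcal O_{V,v}$ is a UFD (so prime factorization is available) and that each $f^*(\pi_i)$ remains prime in $\mathcal O_{V,v}$ (via regularity of $R/(\pi_i)$ after smooth base change). Under weaker hypotheses a pulled-back prime need not remain prime, and the argument would require additional input.
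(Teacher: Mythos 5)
There is a genuine gap. The pivotal step asserts that $R/(\pi_i)$ is regular, so that $V \times_U \Spec(R/(\pi_i))$ is regular near $v$ and hence $\mathcal O_{V,v}/(f^*\pi_i)$ is a domain. But $R/(\pi_i)$ is \emph{not} regular in general: $R$ is a regular local ring and $(\pi_i)$ is a height-one prime, but the quotient by a prime element need only be a domain, not regular (e.g.\ for $R$ the henselization of $k[s_1,s_2]$ at the origin and $\pi_i = s_1^2 - s_2^3$, the quotient is the henselization of a cuspidal curve). Without regularity, ``smooth over $R/(\pi_i)$ implies regular'' fails, and more to the point, a local ring of a scheme smooth over a non-normal local domain need not be a domain, so the claim that $(f^*\pi_i)$ is prime in $\mathcal O_{V,v}$ is unjustified. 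This is not a cosmetic issue: the phenomenon you need to rule out is precisely the analytic reducibility of $R/(\pi_i)$, and controlling it requires the henselian and excellence hypotheses on $R$, which you never invoke. A related red flag is that you also never use the hypothesis $\kappa(v) = K$, which the paper includes for exactly this purpose.

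That said, your overall strategy---work in the UFD $\mathcal O_{V,v}$, factor the local generator $g$ into primes, and show each prime factor is, up to a unit, pulled back from a height-one prime $(\pi_i)$ of $R$---is a viable alternative to the paper's route. The paper instead passes to completions, identifying $\widehat R \cong K[[s]]$ and $\widehat{\mathcal O_{V,v}} \cong K[[s,t]]$, where the assertion ``a divisor of an element of $K[[s]]$ inside $K[[s,t]]$ is a unit times an element of $K[[s]]$'' is immediate. To repair your argument inside $\mathcal O_{V,v}$ itself, you should use the section $\alpha\colon U \to V$ through $v$ that henselianness of $R$ (together with $\kappa(v) = K$, and the fact that $V\to U$ is smooth) provides: from $f^*\pi_i = c_i p_i$, applying the local homomorphism $\alpha^*\colon \mathcal O_{V,v}\to R$ with $\alpha^*\circ f^* = \mathrm{id}$ gives $\pi_i = \alpha^*(c_i)\,\alpha^*(p_i)$, where $\alpha^*(p_i)$ is a non-unit (as $p_i \in \mathfrak m_v$); since $\pi_i$ is irreducible in the UFD $R$, $\alpha^*(c_i)$ is a unit, hence $c_i \notin \mathfrak m_v$ is a unit in $\mathcal O_{V,v}$, and $(p_i) = (f^*\pi_i)$. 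This bypasses primality of $(f^*\pi_i)$ entirely. Alternatively, you could argue that the excellent henselian local domain $R/(\pi_i)$ is analytically irreducible, pass to completions, and descend; but as written, the regularity claim is false and the proof does not go through.
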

\begin{proof}
Since $R$ is henselian, there exists a section $\alpha: U \to V$ such that $\alpha(u) = v$. Since $h \circ \alpha: U \to X$ lifts to $\~X$, the ideal $\alpha^*(h^*(\@I_T)_v)$ is principal. If $h(v)$ does not lie in $T$, then this ideal is the unit ideal and so there is nothing left to prove. So we now assume that $h(v) \in T$. Thus the ideal $\alpha^*(h^*(\@I_T)_v)$ is generated by some element $r \in \fr{m}$. It is clear that $\<r\> \supset I_0$. We will show that $h^*(\@I_T)_v$ is generated by $f^*(r)$. As $h$ lifts to $\~X$, we know that the ideal $h^*(\@I_T)_v$ is principal, generated by some element $\rho \in \@O_{V,v}$. Thus we need to show that $f^*(r)$ is a unit multiple of $\rho$.

The sequence of homomorphisms $ R \overset{f^*}{\to} \@O_{V,v} \overset{\alpha^*}{\to} R$ gives rise to the sequence of homomorphisms on the completions $\^{R} \overset{\^{f}^*}{\to} \^{\@O}_{V,v} \overset{\^{\alpha}^*}{\to} \^{R}$ and $\^{\alpha}^* \circ \^{f}^*$ is the identity homomorphism on $\^{R}$. (Here $\^f$ and $\^{\alpha}$ are the induced morphisms of schemes $\Spec \^{\@O}_{V,v} \to \Spec \^{R}$ and $\Spec \^R \to \Spec \^{\@O}_{V,v}$ respectively.) We denote by $\phi_R: \Spec \^R \to \Spec R$ and $\phi_v: \Spec \^{\@O}_{V,v} \to \Spec \@O_{V,v}$ the morphisms induced by the canonical homomorphisms of local rings into their completions.

Suppose $n$ is the Krull dimension of $R$ and $m$ is the dimension of the fibers of $f$. Then there exists a Zariski local neighbourhood $W$ of $v$ in $V$ such that $f|_W$ factors as $W \to \#A^n_U \to U$ where $W \to \#A^n_U$ is an \'etale morphism taking $v$ to the origin in $\#A^n_K \subset \#A^n_U$. Thus there exists a commutative diagram as follows where the horizontal morphisms are isomorphisms:
\[
\xymatrix{
\^{R} \ar[r] \ar[d]_{\^{f^*}} & K[[s]] \ar@{^{(}->}[d] \\
\^{\@O}_{V,v} \ar[r] &   K[[s,t]] \text{.}
}
\]
Here $s = (s_1, \ldots, s_n)$ and $t = (t_1, \ldots, t_m)$ are tuples of variables. 

Let $r_0$ be a non-zero element in $I_0$. (Such an element exists because we are assuming that the generic point of $U$ does not lie in $\pi(T)$.) The ideal $\^{h^*(\@I_T)}_v = \phi_v^*(h^*(\@I_T)_v)$ in $\^{\@O}_{V,v}$ is principal, generated by $\phi_v^*(\rho)$. Then $\phi_v^*(\rho)| \^{f}^*(r_0)$ in $\^{\@O}_{V,v}$ and thus by the above commutative square we see that $\phi_v^*(\rho)$ is a unit multiple of some element in the image of $\^{f}^*$. Thus $\phi_v^*(h^*(\@I_T)_v) = \<\^{f}^*(r^{\prime})\>$ where $r^{\prime} \in \^R$. 

The morphism $\alpha \circ \phi_R: \Spec \^R\to V$, is equal to the composition of $\phi_v \circ \^{\alpha}$ with the obvious morphism $\Spec \@O_{V,v} \to V$. Thus  we have
\begin{eqnarray*}
(\alpha \circ \phi_R)^*(h^*(\@I_T)_v) & = & \^{\alpha}^*(\phi_v^*(h^*(\@I_T)_v)) \\      
& = & \^{\alpha}^*(\<\^{f}^*(r^{\prime})\>)\\
& = & \<r^{\prime}\> \text{.}
\end{eqnarray*}
However, we also have $(\alpha \circ \phi_R)^*(h^*(\@I_T)_v) = \phi_R^*(\<r\>)$. Thus $r^{\prime}$ is a unit multiple of $\phi_R^*(r)$ in $\^R$. Thus the ideals $\phi_v(h^*(\@I_T)_v) = \<\^{f^*}(r^{\prime})\>$ and $\phi_v(\<f^*(r)\>) = \<\^{f^*}(\phi_R^*(r))\>$ are equal. Since $\phi_v$ is a faithfully flat morphism, this shows that $h^*(\@I_T)_v = \<f^*(r)\>$ in $R$.  
\end{proof}

In the above lemma, the element $r \in R$ depends on the point $v$. We will remove this dependence on $v$ through Lemmas \ref{lemma local generator along closed fibre}, \ref{lemma extending local generator from closed subscheme} and Proposition \ref{proposition local generator ghost homotopy}. First, we recall some elementary facts. While these may be well-known to experts, we present the details for the lack of a suitable reference as well as for the sake of completeness.  

\begin{lemma}
\label{lemma henselian ring coefficient field}
Let $S$ be a smooth scheme over a perfect field $F$ and let $s$ be a point of $S$. Then, the henselization $\@O_{S,s}^h$ of the local ring of $S$ at $s$ contains a field $L$ which maps isomorphically onto the residue field $\kappa(s)$ under the map $\@O_{S,s}^h \to \kappa(s)$. 
\end{lemma}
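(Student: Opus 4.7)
The plan is to construct the desired coefficient field $L$ inside $\@O_{S,s}^h$ by first lifting a separating transcendence basis of $\kappa(s)/F$ to $\@O_{S,s}^h$ and then using Hensel's lemma to adjoin the remaining separable algebraic generators. This is the standard coefficient field construction, adapted from the case of complete local rings to the henselian one.

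First I would exploit the perfectness of $F$: since $S$ is locally of finite type, $\kappa(s)$ is a finitely generated extension of $F$, and perfectness forces it to be separably generated. Choose a separating transcendence basis $\bar x_1, \ldots, \bar x_d \in \kappa(s)$ so that $\kappa(s)$ is a finite separable extension of $F(\bar x_1, \ldots, \bar x_d)$, and pick arbitrary lifts $x_1, \ldots, x_d \in \@O_{S,s}^h$. Any algebraic relation over $F$ among the $x_i$ would reduce modulo the maximal ideal to one among the $\bar x_i$, so the $x_i$ are algebraically independent; moreover, any nonzero polynomial in the $x_i$ with coefficients in $F$ reduces to a nonzero element of $\kappa(s)$, hence is a unit in the local ring $\@O_{S,s}^h$. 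This furnishes an embedding $F(x_1, \ldots, x_d) \hookrightarrow \@O_{S,s}^h$ whose residue map is the natural isomorphism onto $F(\bar x_1, \ldots, \bar x_d) \subset \kappa(s)$.

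Next I would invoke the primitive element theorem to write $\kappa(s) = F(\bar x_1, \ldots, \bar x_d)(\bar y)$, where the minimal polynomial $m(T)$ of $\bar y$ is separable, and lift $m$ to a monic polynomial $\tilde m(T) \in F(x_1, \ldots, x_d)[T]$ via the isomorphism just established. Since $\bar y$ is a simple root of $m$ (the reduction of $\tilde m$), Hensel's lemma applied to the henselian ring $\@O_{S,s}^h$ produces an element $y \in \@O_{S,s}^h$ with $\tilde m(y) = 0$ and $y \equiv \bar y \pmod{\fr m}$. Setting $L := F(x_1, \ldots, x_d)(y) \subset \@O_{S,s}^h$, the irreducibility of $\tilde m$ (inherited from that of $m$ via the residue-field isomorphism on polynomial rings) shows that $L \cong F(x_1, \ldots, x_d)[T]/(\tilde m(T))$ is indeed a field, and the residue map sends it surjectively, hence isomorphically, onto $\kappa(s)$.

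The main obstacle is conceptual rather than technical: one must identify the correct invariant to lift, namely a \emph{separating} transcendence basis, which is precisely where the perfectness of $F$ enters. Once such a basis is chosen, the rest of the construction is a formal application of Hensel's lemma, and smoothness of $S$ plays no essential role --- the argument needs only that $\@O_{S,s}^h$ is henselian and that $\kappa(s)/F$ is separably generated.
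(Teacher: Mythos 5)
Your proof is correct, and it takes a genuinely different route from the paper's. The paper exploits the smoothness of $S$ explicitly: after reducing to $S$ of pure dimension $n$ and taking $Z = \overline{\{s\}}$ of codimension $r$, it constructs an \'etale map $f : W \to \#A^n_F$ with $W \cap Z = f^{-1}(T)$ for a linear subvariety $T$, producing an \'etale local homomorphism $\@O_{\#A^n_F, T} \to \@O_{S,s}$. Since $\@O_{\#A^n_F,T}$ visibly contains the coefficient field $L_0 = F(X_{r+1},\dots,X_n)$, one gets $L_0 \hookrightarrow \@O_{S,s}$ with $\kappa(s)/L_0$ finite separable; base-changing along $L_0 \hookrightarrow \kappa(s)$ then yields an \'etale local extension $\@O_{\~S,\~s}$ of $\@O_{S,s}$ with residue field $\kappa(s)$ already containing a copy of $\kappa(s)$, and since any such \'etale local extension embeds in $\@O_{S,s}^h$, this finishes the argument. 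Your proof bypasses the geometry entirely: it is the classical coefficient-field construction, lifting a separating transcendence basis (available because $F$ is perfect, so $\kappa(s)/F$ is separably generated) to get $F(x_1,\dots,x_d) \hookrightarrow \@O_{S,s}^h$, and then adjoining a Hensel lift of a primitive element for the finite separable extension $\kappa(s)/F(\bar x_1,\dots,\bar x_d)$. As you observe, your argument uses only that $\@O_{S,s}^h$ is henselian and that $\kappa(s)/F$ is finitely generated over a perfect field, so smoothness of $S$ is not needed; the paper's route requires it but keeps the \'etale-local geometry of smooth schemes front and center, consistent with the rest of the paper. Your version is more elementary and slightly more general.
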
 

\begin{proof}
We may assume that $S$ is of pure dimension $n$ over $F$. Let $s$ be a point of $S$ of codimension $r$ and let $Z$ be the Zariski closure of $s$ in $S$. Since $Z$ is generically smooth, there exists an open subscheme $W$ of $S$ and regular functions $f_1, \ldots, f_n: W \to \#A^n_F$ such that the map 
\[
f:= (f_1, \ldots, f_n): U \to \#A^n_F = \Spec F[X_1, \ldots, X_n] 
\]
is \'etale and $W \cap Z = f^{-1}(T)$ where $T$ is the linear subvariety of  $\#A^n_F$ defined by $X_1 = \cdots = X_r = 0$.  Hence, we obtain an \'etale ring homomorphism $\@O_{\#A^n_F, T} \to \@O_{S,s}$. Observe that the local ring $\@O_{\#A^n_F, T}$ contains the field $L_0:= F(X_{r+1}, \ldots, X_n)$, which maps isomorphically onto its residue field. Thus, we have an inclusion $L_0 \hookrightarrow \@O_{\#A^n_F,T} \hookrightarrow \@O_{S,s}$ and the composition $L_0 \hookrightarrow \@O_{S,s} \to \kappa(s)$ is a separable algebraic extension of finite degree. 

The ring homomorphism $\@O_{S,s} \to \@O_{S,s} \otimes_{L_0} \kappa(s)$, $\alpha \mapsto \alpha \otimes 1$ is \'etale and the morphism $s:\Spec \kappa(s) \to \Spec \@O_{S,s}$ admits a lift to $\~s: \Spec \kappa(s) \to \Spec(\@O_{S,s} \otimes_{L_0} \kappa(s)) = \~S$. Thus, we see that the local ring $\@O_{\~S, \~s}$ is an \'etale extension of $\@O_{S,s}$, satisfying $\kappa(s) = \kappa(\~s)$. Since this ring contains a field that maps isomorphically onto $\kappa(s)$, we conclude that the same is true of the henselization of $\@O_{S,s}$. 
\end{proof}

\begin{lemma}
\label{lemma openness}
Let $A \to B$ be a homomorphism of commutative rings. Then the morphism $\Spec B \to \Spec A$ is universally open in each of the following two cases:
\begin{itemize}
\item[(1)] $A$ is a field.
\item[(2)] $A$ is a noetherian local ring and the homomorphism $A \to B$ is the henselization of $A$ at its maximal ideal. 
\end{itemize} 
\end{lemma}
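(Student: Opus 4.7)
The strategy in both cases is the same: I would verify that $A \to B$ is flat, observe that $\Spec B \to \Spec A$ is automatically quasi-compact as a morphism between affine schemes, and then reduce to the classical fact that a flat morphism locally of finite presentation is universally open, by approximating $B$ by sub-$A$-algebras with stronger finiteness.

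For case (1), flatness of $B$ over $A$ is automatic since $A$ is a field and $B$ is free as an $A$-module; fix such a basis. The plan is to write $B$ as the filtered union of its finitely generated $k$-subalgebras $B_\beta$; each $\Spec B_\beta \to \Spec A$ is flat and of finite presentation, hence universally open. For any $A$-algebra $C$ and a basic open $D(f) \subset \Spec(B \otimes_A C)$, the element $f$ lies in $B_\beta \otimes_A C$ for some $\beta$; since $B \otimes_A \kappa(\mathfrak{q})$ is free over $\kappa(\mathfrak{q})$ on the chosen basis, I would argue that the set of $\mathfrak{q}$ for which the image of $f$ in the fiber is non-nilpotent can be detected by finitely many basis components, and hence coincides with the corresponding (open) image from the finite level.

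For case (2), flatness of $A \to A^h$ is a classical property of henselization. I would use the description of $A^h$ as the filtered colimit of \'etale local $A$-algebras $A_\alpha$ having trivial residue-field extension, i.e.\ the pointed \'etale neighbourhoods of the closed point. Each $\Spec A_\alpha \to \Spec A$ is \'etale, hence universally open. Given an $A$-algebra $C$ and an element $f \in A^h \otimes_A C$, choose $\alpha_0$ with $f \in A_{\alpha_0} \otimes_A C$; since $A^h$ is also the henselization of $A_{\alpha_0}$, the morphism $A_{\alpha_0} \to A^h$ is faithfully flat. Consequently the map on fibers $\Spec(A^h \otimes_A \kappa(\mathfrak{q})) \to \Spec(A_{\alpha_0} \otimes_A \kappa(\mathfrak{q}))$ is surjective for every prime $\mathfrak{q}$ of $C$, which forces the image of $D(f)$ in $\Spec C$ to be the same whether computed at the $A^h$ or the $A_{\alpha_0}$ level. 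The image at the $A_{\alpha_0}$ level is open by the \'etale case, so the same holds at the $A^h$ level.

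The main difficulty is the descent from the colimit to a finite approximation. In (2) this is clean, thanks to the faithful flatness of $A^h$ over each $A_\alpha$. In (1) one cannot expect $B$ to be flat over an arbitrary finitely generated subalgebra, so the descent must be carried out differently; the idea is to exploit the free structure of $B \otimes_A \kappa(\mathfrak{q})$ over the residue field in order to reduce the condition that a given element be non-nilpotent in the fiber to a finite-dimensional calculation, thereby recovering the open image from a finite-type level.
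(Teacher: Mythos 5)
Your overall strategy matches the paper's, which disposes of Case~(1) by citing \cite[Theorem 14.36]{GT} and of Case~(2) with the one-line remark that the henselization is a filtered colimit of \'etale $A$-algebras of finite type, ``so the same argument applies.'' Your Case~(2) argument is a clean and correct way to make that remark precise: picking $\alpha_0$ so that $f \in A_{\alpha_0} \otimes_A C$, using that $A^h$ is the henselization of $A_{\alpha_0}$ and is therefore faithfully flat over it, and deducing surjectivity on fibers (hence equality of the images of $D(f)$) is exactly the kind of descent the paper has in mind, and you have executed it correctly.

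For Case~(1) the core plan (approximate $B$ by finitely generated, hence finitely presented, $k$-subalgebras $B_\beta$ and descend openness of images from $\Spec B_\beta$) is right, but the pivotal step --- why the image of $D(f)$ in $\Spec C$ computed from $\Spec(B \otimes_A C)$ equals the one computed from $\Spec(B_\beta \otimes_A C)$ --- is stated too vaguely. ``Non-nilpotency in the fiber is detected by finitely many basis components'' is not literally true as a module-theoretic statement, because nilpotency is a ring-theoretic condition and the powers $f^n$ may involve basis elements far outside any fixed finite subset. The correct and simpler observation is that the map $B_\beta \otimes_A \kappa(\mathfrak{q}) \to B \otimes_A \kappa(\mathfrak{q})$ is an \emph{injective} ring homomorphism: $B_\beta \hookrightarrow B$ is injective and $\kappa(\mathfrak{q})$ is flat over $A$ because $A$ is a field. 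Injectivity gives the nontrivial direction (nilpotency upstairs forces nilpotency downstairs), while the other direction is automatic from functoriality. With that substitution your Case~(1) goes through, and the whole proof is a faithful (slightly expanded) version of what the paper sketches.
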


\begin{proof}
Case (1) is proved in \cite[Theorem 14.36]{GT}. The proof of Case (2) follows from essentially the same argument since the henselization $B$ is a direct limit of \'etale $A$-algebras of finite type. 
\end{proof}

\begin{lemma}
\label{lemma local generator along closed fibre} 
Let $f: V \to U$ be a smooth morphism. Let $h: V \to X$ be a morphism which lifts to $\~X$. Let $z$ be a point of $U$ and let $v$ be a point of $f^{-1}(z) = V \times_{U,z} \Spec \kappa(z)$ such that the ideal $h^*(\@I_T)_v$ in $\@O_{V,v}$ is generated by an element of the form $f^*(r)$ where $r \in R$. Let $Z$ be the irreducible component of $f^{-1}(z)$ containing $v$. Then there exists an open subscheme $W_0$ of $V$ such that $Z \subset W_0$ and the ideal sheaf $h^*(\@I_T)|_{W_0}$ is generated by $f^*(r)$. 
\end{lemma}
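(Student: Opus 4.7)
The plan is to show that the open locus $W_0 := \{v' \in V : \mathcal J_{v'} = \langle f^*(r) \rangle \mathcal O_{V,v'}\}$, where $\mathcal J := h^*(\mathcal I_T)$, contains $Z$. Since $h$ lifts to $\tilde X$, the ideal sheaf $\mathcal J$ is invertible on $V$; the hypothesis at $v$ therefore implies that $f^*(r)$ is a local generator of $\mathcal J$ in some Zariski neighbourhood $W_1 \ni v$. The locus $W_0$ is open and contains $W_1$, so $W_0 \cap Z$ is a non-empty open subset of the irreducible scheme $Z$, and in particular contains the generic point $\eta_Z$ of $Z$; thus $\mathcal J_{\eta_Z} = \langle f^*(r) \rangle \mathcal O_{V,\eta_Z}$.

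The key geometric input is the following: every prime divisor of $V$ passing through a point $v' \in Z$ and arising as an irreducible component either of $h^{-1}(T)$ or of $\mathrm{div}(f^*(r)) = f^*(\mathrm{div}(r))$ must contain all of $Z$. For $\mathrm{div}(f^*(r))$, the components are irreducible components of pullbacks $f^{-1}(Y_i)$ of prime divisors $Y_i \subseteq U$ with $z \in Y_i$; since $f$ is smooth, $f^{-1}(Y_i) \to Y_i$ is smooth and hence $f^{-1}(Y_i)$ is regular, so through each of its points there is a unique irreducible component. The irreducible subscheme $Z$ lies inside $f^{-1}(z) \subseteq f^{-1}(Y_i)$, and so lies entirely in the unique component through any $v' \in Z$. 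For the components of $h^{-1}(T)$, we invoke the structural constraint that $\pi(T)$ is non-dense in $U$: there exists a proper ideal $I_0 \subseteq R$ with $\pi^*(\mathcal I_0) \cdot \mathcal O_X \subseteq \mathcal I_T$, whence $\mathcal J \supseteq f^*(\mathcal I_0) \cdot \mathcal O_V$ and $h^{-1}(T) \subseteq f^{-1}(V(I_0))$; the irreducible components of $h^{-1}(T)$ are therefore components of $f^{-1}(Y_j)$ for prime divisors $Y_j$ of $V(I_0)$, to which the same smoothness argument applies.

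Given this input, fix $v' \in Z$ and let $\rho'$ be a local generator of $\mathcal J$ at $v'$. Since $V$ is smooth, $\mathcal O_{V,v'}$ is a regular local ring, hence a UFD, and we may factor $\rho' = u_{\rho}\prod_i \pi_i^{a_i}$ and $f^*(r) = u_r \prod_i \pi_i^{b_i}$, with units $u_\rho, u_r$, integers $a_i, b_i \geq 0$, and pairwise non-associate irreducibles $\pi_i$. The geometric input forces every $\pi_i$ with $a_i > 0$ or $b_i > 0$ to generate a height-one prime of $\mathcal O_{V,v'}$ contained in the prime $\mathfrak p_{\eta_Z}$ corresponding to $\eta_Z$. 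Localising at $\mathfrak p_{\eta_Z}$ and using the equality $\mathcal J_{\eta_Z} = \langle f^*(r) \rangle \mathcal O_{V,\eta_Z}$, unique factorisation in the UFD $\mathcal O_{V,\eta_Z}$ forces $a_i = b_i$ for every such $i$, which combined with $a_i = b_i = 0$ for the remaining $i$ yields $\langle \rho' \rangle = \langle f^*(r) \rangle$ in $\mathcal O_{V,v'}$; hence $v' \in W_0$. The main obstacle is establishing the geometric input, which rests on the smoothness of $f$ together with the codimension-one constraint $h^{-1}(T) \subseteq f^{-1}(V(I_0))$ forced by $\pi(T)$ being non-dense in $U$.
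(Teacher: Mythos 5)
Your route is genuinely different from the paper's: the paper henselizes at $z$ and re-runs the argument of Lemma~\ref{lemma local generator} over the auxiliary ring $R' = (\kappa(v_1)\otimes_{\kappa(z)}R_z^h)^h$, then compares the two resulting generators using openness of $\psi_V$; you instead compare prime factorizations at $v'$ with factorizations at $\eta_Z$. The overall plan is sound, but your justification of the ``geometric input'' contains a genuine error. You assert that since $f^{-1}(Y_i)\to Y_i$ is smooth, $f^{-1}(Y_i)$ is regular and hence has a unique irreducible component through each point. This is false: $Y_i=V(p_i)$ for a prime $p_i\in R$ need not be regular (think of $p_i=y^2-x^3$ with $R\supseteq k[x,y]^h$), and smoothness over a singular base does not make the total space regular. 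The same objection applies verbatim to your treatment of the components of $h^{-1}(T)$. As written, nothing in the proof rules out a prime factor $\pi$ of $\rho'$ or of $f^*(r)$ in $\mathcal O_{V,v'}$ lying outside $\mathfrak p_{\eta_Z}$, which would break the multiplicity comparison at $\eta_Z$.

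The claim you need is in fact true, but the correct argument is a dimension count rather than a regularity assertion. If $\pi$ is a height-one prime of $\mathcal O_{V,v'}$ containing $f^*(p_i)$, then $\pi$ contracts to $\langle p_i\rangle$ in $\mathcal O_{U,z}$ (by flatness and going-down), and the dimension inequality for the local homomorphism $\mathcal O_{U,z}/p_i\to\mathcal O_{V,v'}/\pi$, together with catenarity of these rings, forces $\dim\bigl(\mathcal O_{V,v'}/(\pi+\mathfrak q)\bigr)=\dim\bigl(\mathcal O_{V,v'}/\mathfrak q\bigr)$, where $\mathfrak q=\mathfrak p_z\mathcal O_{V,v'}=\mathfrak p_{\eta_Z}$; since $\mathcal O_{V,v'}/\mathfrak q$ is a regular local domain, this yields $\pi\subseteq\mathfrak q$. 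With this repair your argument closes, but the regularity claim you actually wrote does not hold and leaves a gap.
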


\begin{proof}
In the local ring $\@O_{V,v}$, we have the equality of ideals $h^*(\@I_T) = \<f^*(r)\>$ which must hold in a neighbourhood of $v$. Thus, there exists an open subscheme $W$ of $V$ with $v \in W$ such that the ideal sheaves $h^*(\@I_T)|_W$ and $\<f^*(r)\>$ of $\@O_V|_W$ are equal. Let $v_1 \in Z \backslash W$. We wish to show that the ideal $h^*(\@I_T)_{v_1}$ in $\@O_{V,v_1}$ is also generated by $f^*(r)$. 

We use Lemma \ref{lemma henselian ring coefficient field} to view $R_z^h$ as a $\kappa(z)$-algebra.  Using the ring homomorphism $f^*: \kappa(z) \to \kappa(v_1)$, we define the ring $R' := (\kappa(v_1) \otimes_{\kappa(z)} R_z^h)^h$.  Let $\psi_{U}$ denote the morphism $\Spec R' \to \Spec R$. Let $\~V = V \times_{U} \Spec R^{\prime}$. Let $\psi_V$ and $\~f$ denote the projections $\~V \to V$ and $\~V \to \Spec R^{\prime}$ respectively. 

We have a commutative square
\[
\xymatrix{
\Spec \kappa(v_1) \ar[r]^{v_1} \ar[d] & V \ar[d] \\
\Spec R' \ar[r] & \Spec R
}
\]
which gives a morphism $\~v_1: \Spec\kappa(v_1) \to \~V$. Then by the argument in the proof of Lemma \ref{lemma local generator}, we see that the ideal $(h \circ \psi_V)^*(\@I_T)_{\~v_1}$ in $\@O_{\~V, \~v_1}$ is generated by an element of the form $\~f^*(\rho)$ for some $\rho \in R^{\prime}$. Thus there exists an open subscheme $\~W$ of $\~V$ containing $\~v_1$ such that $(h \circ \psi_V)^*(\@I_T)(\~W)$ is generated by $\~f^*(\rho)$. 

By Lemma \ref{lemma openness}, $\psi_V$ is open. Thus, $\psi_V(\~W)$ is an open subscheme containing $v_1$. Thus, there exists a point $\~v_2$ in $\~W$ such that the point $v_2 := \psi_V(\~v_2)$ is in $W$. Thus 
\[(h \circ \psi_V)^*(\@I_T)_{\~v_2} = \psi_V^*(h^*(\@I_T)_{v_2}) = \psi_V^*(\<f^*(r)\>) = \<\~f^*(\psi_U^*(r))\> \text{.}
\] 
However, we also have $(h \circ \psi_V)^*(\@I_T)_{\~v_2} = \<\~f^*(\rho)\>$ as $\~v_2 \in \~W$. As $\~f$ is faithfully flat, we see that the ideals $\<\psi_U^*(r)\>$ and $\<\rho\>$ of $R'$ are equal. Thus $\rho$ is a unit multiple of $\psi_U^*(r)$ in $R'$. Thus the ideal $\psi_V^*(h^*(\@I_T)_{v_1})$ in $\@O_{\~V, \~v_1}$ is equal to $\psi_V^*(\<f^*(r)\>)$. Since $\psi_V^*: \@O_{V,v_1} \to \@O_{\~V,\~v_1}$ is faithfully flat, this implies that the ideal $h^*(\@I_T)_{v_1}$ in $\@O_{V,v_1}$ is equal to $\<f^*(r)\>$.  
\end{proof}

\begin{lemma}
\label{lemma extending local generator from closed subscheme}
For $i = 1,2$, let $f_i: V_i \to U$ be smooth morphisms and let $h_i: V_i \to X$ be morphisms over $U$ which lift to $\~X$. Let $f: V_1 \to V_2$ and $g: V_2 \to V_1$ be morphisms over $U$ such that $g$ is a closed embedding, $h_2 = h_1 \circ g$ and $f \circ g = Id_{V_2}$. Let $v_i$ be a point of $V_i$ for $i = 1,2$ such that $g(v_1) = v_2$. Suppose $h_2^*(\@I_T)_{v_2} = \<f_2^*(r)\>$ for some $r \in R$. Then $h_1^*(\@I_T)_{v_1} = \<f_1^*(r)\>$. 
\end{lemma}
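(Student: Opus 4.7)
The plan is to mimic the argument of Lemma \ref{lemma local generator}, constructing a section of $f_1 : V_1 \to U$ through $v_1$ as the composite $\gamma := g \circ \beta$ of a henselian section $\beta$ of $f_2$ through $v_2$ with the closed embedding $g$, and then pulling back a chosen generator of $h_1^*(\@I_T)_{v_1}$ along $\gamma$ to identify it (up to a unit in $R$) with the given element $r$.

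First I would reduce to the case where the common residue field $\kappa(v_1) = \kappa(v_2)$ (the equality is automatic, since $g$ is a closed embedding with $g(v_2) = v_1$, so $g^*$ induces an isomorphism on residue fields) agrees with the residue field $K$ of the closed point $u$ of $U$, and $f_1(v_1) = f_2(v_2) = u$.  This is carried out exactly as in the proof of Lemma \ref{lemma local generator along closed fibre}: set $z := f_2(v_2)$, form the henselian local ring $R' := (\kappa(v_1) \otimes_{\kappa(z)} R_z^h)^h$, base-change $V_1, V_2$ along $\Spec R' \to \Spec R$, and pick compatible lifts $v_1', v_2'$ whose residue fields coincide with that of the closed point of $R'$.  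Since the induced local homomorphism $\@O_{V_1, v_1} \to \@O_{V_1', v_1'}$ is faithfully flat, an equality of principal ideals in the target descends to the source, so this reduction is harmless.

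Assuming the reduction, henselianness of $R$ yields a section $\beta : U \to V_2$ of $f_2$ with $\beta(u) = v_2$; then $\gamma := g \circ \beta$ satisfies $f_1 \circ \gamma = (f_2 \circ f) \circ (g \circ \beta) = f_2 \circ \beta = Id_U$ and $\gamma(u) = v_1$, making $\gamma$ a section of $f_1$ through $v_1$.  Let $\rho \in \@O_{V_1, v_1}$ generate the invertible ideal $h_1^*(\@I_T)_{v_1}$, which exists because $h_1$ lifts to $\~X$.  From $h_2 = h_1 \circ g$ we obtain $\<g^*(\rho)\> = h_2^*(\@I_T)_{v_2} = \<f_2^*(r)\>$, so $g^*(\rho) = u \cdot f_2^*(r)$ for some unit $u \in \@O_{V_2, v_2}$.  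Using $f_2 \circ \beta = Id_U$, a direct computation gives
\[
\gamma^*(\rho) \;=\; \beta^*(g^*(\rho)) \;=\; \beta^*(u) \cdot \beta^*(f_2^*(r)) \;=\; \beta^*(u) \cdot r,
\]
a unit multiple of $r$ in $R$; in particular $\<\gamma^*(\rho)\> = \<r\>$.

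Finally I would invoke the completion argument from the second half of the proof of Lemma \ref{lemma local generator}, with $\gamma$ playing the role of the section $\alpha$ there.  Since $h_1$ is a morphism over $U$ and $T \subset \pi^{-1}(V(I_0))$, one has $f_1^*(I_0) \subset h_1^*(\@I_T) = \<\rho\>$, so $\rho$ divides $f_1^*(r_0)$ in $\@O_{V_1, v_1}$ for every nonzero $r_0 \in I_0$.  Passing to the completion $\^{\@O}_{V_1, v_1}$, which via $\gamma$ decomposes as $\^R[[\tau_1, \ldots, \tau_d]]$ for a suitable transverse system of parameters, the unique-factorization argument of that proof shows that $\rho$ is a unit multiple of $f_1^*(\gamma^*(\rho))$ in the completion, and hence in $\@O_{V_1, v_1}$ by faithfully flat descent.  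Combined with the earlier equality $\<\gamma^*(\rho)\> = \<r\>$, this yields $\<\rho\> = \<f_1^*(r)\>$ in $\@O_{V_1, v_1}$, as required.  The main obstacle I expect is the careful bookkeeping of the base-change reduction in the first step --- verifying that smoothness of the $f_i$, liftability of the $h_i$ to $\~X$, and the relations $h_2 = h_1 \circ g$ and $f \circ g = Id_{V_2}$ are all preserved after passing from $R$ to $R'$ --- after which the henselian section construction applies cleanly in the reduced setting.
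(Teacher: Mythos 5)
Your proposal is correct and follows essentially the same route as the paper's proof. Both arguments perform the same base change to $R' = (\kappa(v_1)\otimes_{\kappa(z)} R^h_z)^h$ to reduce to a $K$-rational point over the closed point, and both ultimately combine Lemma \ref{lemma local generator} for $(V_1,h_1,v_1)$ with the relation $h_2 = h_1\circ g$; the only cosmetic difference is the order in which these are applied. The paper first invokes Lemma \ref{lemma local generator} abstractly to produce a $\rho\in R'$ with $(h_1\circ\psi_{V_1})^*(\@I_T)_{\~v_1} = \<\~f_1^*(\rho)\>$ and then pushes forward along $\~g^*$ to match $\rho$ against $\psi_U^*(r)$; you instead first push along $g^*$ via the explicit section $\gamma = g\circ\beta$ to identify $\gamma^*(\rho)$ with a unit multiple of $r$, and only then run the completion argument of Lemma \ref{lemma local generator} with $\gamma$ in the role of $\alpha$. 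Both are valid; the ingredients and the faithfully-flat descent at the end are the same.
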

\begin{proof}
This proof is along the same lines as the proof of Lemma \ref{lemma local generator along closed fibre}. Let $z = f_1(v_1) = f_2(v_2)$. Let $R_z$ be the localization of $R$ at the prime ideal corresponding to the point $z$ and let $R'$ be as in the proof of Lemma \ref{lemma local generator along closed fibre}. Let $\psi_U$ be the morphism $\Spec R' \to \Spec R$.  For $i=1,2$, we define $\~V_i = V_i \times_{U} \Spec R'$ and denote by $\~f_i$ and $\psi_{V_i}$ the projections $\~V_i \to \Spec R'$ and $\~V_i \to V_i$, respectively. Also, the morphisms $\~f$ and $\~g$ are the pullbacks of $f$ and $g$. We also have points $\~v_i$ on $\~V_i$ for $i = 1,2$ such that $\psi_{V_i}(\~v_i) = v_i$ and $\~g(\~v_2) = \~v_1$. 

Then by the argument in the proof of Lemma \ref{lemma local generator along closed fibre}, there exists an element $\rho \in R^{\prime}$ such that the ideal $(h_1 \circ \psi_{V_1})^*(\@I_T)_{\~v_1}$ in $\@O_{\~V, \~v_1}$ is generated by $\~f_1^*(\rho)$. Thus, we have 
\begin{eqnarray*}
(h_2 \circ \psi_{V_2})^*(\@I_T)_{\~v_2} & = & (h_1 \circ g \circ \psi_{V_2})^*(\@I_T)_{\~v_2} \\
& = & (h_1 \circ\psi_{V_1}\circ \~g)^*(\@I_T)_{\~v_2} \\
& = & \~g^*(\<\~f_1^*(\rho)\>) \\
& = & \<\~f_2^*(\rho)\> \text{.}
\end{eqnarray*}
However, by assumption, $(h_2 \circ \psi_{V_2})^*(\@I_T)_{\~v_2} = \psi_{V_2}^*(\<f_2^*(r)\>) = \~f_2^*(\psi_U^*(r))$. This proves that $\rho$ is a unit multiple of $\psi_{U}^*(r)$ in $R^{\prime}$. As in the proof of Lemma \ref{lemma local generator along closed fibre}, this shows that $h_1^*(\@I_T)_{v_1} = \<f_1^*(r)\>$.
\end{proof}

\begin{proposition}
\label{proposition local generator ghost homotopy}
Let $n \geq 0$ be an integer. Let $\alpha$ and $\alpha^{\prime}$ be sections of $X \to U$ which are connected by an $n$-ghost homotopy $\@H$. Suppose that $\@H$ (and hence $\alpha$, $\alpha'$) lifts to $\~X$. Then there exists $r \in R$ such that $\<r\> \supset I_0$ and $r$ generates $\alpha^*(\@I_T)=(\alpha')^*(\@I_T)$. Also, in this case $h_{\@H}^*(\@I_T)$ is generated by $f^*_{\@H}(r)$. 
\end{proposition}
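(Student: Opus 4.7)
The plan is to induct on $n$. The base case uses Lemmas \ref{lemma local generator} and \ref{lemma local generator along closed fibre} at the point $\sigma_0(u)$, and the inductive step additionally exploits the chain homotopy $h^W$ in $\@H$ together with the inductive hypothesis to compare ideal generators arising at different points of $V$.

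For $n = 0$, $\@H$ is a single $\A^1$-homotopy $h: U \times \A^1 \to X$ lifting to $\~X$. Lemma \ref{lemma local generator} at $v = \sigma_0(u) = (u,0)$ produces $r \in R$ with $\<r\> \supset I_0$ and $h^*(\@I_T)_v = \<pr_1^*(r)\>$; Lemma \ref{lemma local generator along closed fibre} extends this to an open $W_0 \subset U \times \A^1$ containing $\{u\} \times \A^1$. Since $R$ is henselian local, $\sigma_i^{-1}(W_0)$ is an open neighbourhood of $u$ in $U$ and hence equals $U$, so $\sigma_i(U) \subset W_0$ for $i = 0, 1$, and pullback gives $\alpha^*(\@I_T) = (\alpha')^*(\@I_T) = \<r\>$. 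For the global identification $h^*(\@I_T) = \<pr_1^*(r)\>$ on $\Sp(\@H) = \Spec R[t]$: since $R$ is regular local, $R[t]$ is a UFD, so the invertible ideal $h^*(\@I_T)$ is principal, generated by some $g \in R[t]$; the inclusion $V(g) \subset V(I_0 R[t])$ forces every irreducible factor of $g$ to lie in $R$ (a positive-$t$-degree irreducible factor is primitive by Gauss's lemma, hence contracts trivially to $R$ and cannot contain $I_0 \neq 0$), so $g \in R$; comparison of $g$ with $r$ on $W_0$ via the section $\sigma_0$ yields $gR = rR$, whence $\<g\> = \<pr_1^*(r)\>$ globally in $R[t]$.

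For the inductive step, write $\@H = (V, W, \~\sigma_0, \~\sigma_1, h, h^W)$ with $h^W$ an $(n-1)$-ghost chain homotopy. Running the above analysis on $h: V \to X$ at $v = \~\sigma_0(u)$ (which has residue field $K$ since $\~\sigma_0$ lifts $\sigma_0$) yields $r \in R$ and an open $W_0^V \subset V$ around the irreducible component $Z_0$ of $f_V^{-1}(u)$ through $v$, with $h^*(\@I_T)|_{W_0^V} = \<f_V^*(r)\>$ and, by henselian locality, $\alpha^*(\@I_T) = \<r\>$. The analogous analysis at $v' = \~\sigma_1(u)$, which may lie in a different component of $V_u := V \times_U \Spec K$, produces $r' \in R$ with $(\alpha')^*(\@I_T) = \<r'\>$. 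To identify $\<r\>$ with $\<r'\>$, we exploit $h^W$: by the Nisnevich covering property of $V \to \A^1_U$ together with the connectedness of $\A^1_K$, any two components of $V_u$ are linked by a finite chain of \emph{crossings} -- points of $W$ whose two projections to $V$ hit distinct components. At each such crossing $w$, $h^W$ restricts to an $(n-1)$-ghost chain homotopy between $h \circ pr_1$ and $h \circ pr_2$; applying the inductive hypothesis at the henselization of $\@O_{W,w}$ (which is faithfully flat over $R$ whenever $w$ lies above $u$, so that equalities of principal ideals descend by normality of $R$) gives equality of the pullback ideals, and chaining descends to $\<r\> = \<r'\>$ in $R$. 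The global identification of $h_{\@H}^*(\@I_T)$ with $\<f_{\@H}^*(r)\>$ on $\Sp(\@H) = V \amalg \coprod_i \Sp(\@H_i)$ then follows by combining the above on each component of $V$ with the inductive hypothesis on each $\Sp(\@H_i)$, where Lemma \ref{lemma extending local generator from closed subscheme} provides the compatibility needed to propagate the generator from the retraction sections to the ambient pieces.

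The main obstacle is the chain-of-crossings argument in the inductive step: identifying the right sequence of points in $W$ that links $v$ to $v'$ through the fibre structure of $V \to \A^1_U$, and verifying that the equalities produced by the inductive hypothesis at the henselizations of these points descend faithfully to $R$. A secondary subtlety is globalizing the ideal identification from a neighbourhood of $V_u$ to all of $V$, which is handled by content arguments analogous to those of the base case, applied \'etale-locally along $V \to \A^1_U$.
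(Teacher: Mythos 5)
Your proof follows the same overall strategy as the paper: induction on $n$, using Lemmas \ref{lemma local generator}--\ref{lemma extending local generator from closed subscheme} for the local analysis at $\~\sigma_0(u)$ and $\~\sigma_1(u)$, finding a point of $W$ over a common image in $\A^1_K$, applying the inductive hypothesis there and descending by faithful flatness of $W \to U$. As a minor point, the chain of crossings you highlight as the main obstacle is unnecessary: $\A^1_K$ is irreducible and the morphisms $Z_i \to \A^1_K$ are \'etale, hence open with nonempty image, so the images of $Z_0$ and $Z_1$ already intersect, giving a single crossing directly (and the descent of ideal equalities to $R$ is by faithful flatness of $R \to \@O_{W,w}$, not by normality of $R$).

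There is, however, a genuine gap in the globalization step for $n \geq 1$. You propose to "apply content arguments \'etale-locally along $V \to \A^1_U$," but the Gauss's-lemma argument you use in the base case depends on $R[t]$ being a UFD, and this property does not pass to a general \'etale extension, so the argument does not go through on $V$. The paper instead exploits locality of $R$: the local analysis produces an open $V_0 \subset V$ containing $f_{\@H}^{-1}(u)$ on which $h_{\@H}^*(\@I_T)$ is generated by $f_{\@H}^*(r)$; the locus $\{z \in U : f_{\@H}^{-1}(z) \cap V_0 \neq \emptyset\}$ is open (since $f_{\@H}|_V$ is smooth) and contains $u$, hence equals all of $U$; so for each $z \in U$ one re-runs the entire argument over $U_z = \Spec \@O_{U,z}^h$ to get $\~r \in \@O_{U,z}^h$ generating $h_{\@H}^*(\@I_T)$ in a neighbourhood of $f_{\@H}^{-1}(z)$, and the nonempty intersection $f_{\@H}^{-1}(z) \cap V_0$ forces $\<\~r\> = r\@O_{U,z}^h$, giving the generator $f_{\@H}^*(r)$ on all of $V$. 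Your UFD argument is fine on $\Spec R[t]$ in the base case, but the inductive step needs this henselization-at-$z$ argument rather than an \'etale-local content argument.
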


\begin{proof}
This will be proved by induction on $n$. We begin with the case $n = 0$. Thus, suppose $\alpha$ and $\alpha^{\prime}$ are connected by a single $\#A^1$-homotopy $h: \#A^1_U \to X$, which lifts to $\~X$.  In other words, $h \circ \sigma_0 = \alpha$ and $h \circ \sigma_1 = \alpha'$, where $\sigma_i: U \to \A^1_U$ denotes the $i$-section, for $i=0,1$.  Let $r$ be such that 
$\alpha^*(\@I_T) = \<r\>$. Then by Lemma \ref{lemma extending local generator from closed subscheme} (applied to $V_1 = \A^1_U$, $V_2=U$, $f_1 = f = pr_2$, where $pr_2$ is the projection $\#A^1_U \to U$, $f_2 = id_U$ and $g= \sigma_0$), we see that $h^*(\@I_T)|_{\sigma_0(u)}$ is generated by $pr_2^*(r)$. By Lemma \ref{lemma local generator along closed fibre}, there exists an open subscheme $W_0 \subset \#A^1_U$ containing the closed fiber $\#A^1_k \subset \#A^1_U$ such that $h^*(\@I_T)|_{W_0}$ is generated by $pr_2^*(r)$. In particular, $h^*(\@I_T)_{(\sigma_1(u))}$ is generated by $pr_2^*(r)$. Thus $(\alpha^{\prime})^*(\@I_T) = \sigma_1^*(h(\@I_T))$ is generated by $\sigma_1^*(pr_2^*(r)) = r$.  This completes the proof in the case $n = 0$. 

Now suppose the result has been proved for $m$-ghost homotopies where $m < n$. Suppose $\alpha$ and $\alpha^{\prime}$ are connected by an $n$-ghost homotopy 
\[
\@H:=(V \to \#A^1_U, W \to V \times_{\#A^1_U} V, \~\sigma_0, \~\sigma_1, h, \@H^W ) \text{.}
\]
Let $r, r^{\prime} \in R$ be such that $\alpha^*(\@I_T) = \<r\>$ and $(\alpha^{\prime})^*(\@I_T) = \<r^{\prime}\>$. Then, by Lemma \ref{lemma extending local generator from closed subscheme}, we see that $h_{\@H}^*(\@I_T)_{\~{\sigma_0}(u)} = \<f_{\@H}(r)\>$ and $h_{\@H}^*(\@I_T)_{\~{\sigma_1}(u)} = \<f_{\@H}(r^{\prime})\>$. Let $Z_0$ and $Z_1$ be the irreducible components of the closed fiber of $f_{\@H}^{-1}(u)$ of $V$ containing $\~{\sigma_0}(u)$ and $\~{\sigma_1}(u)$ respectively. By Lemma \ref{lemma local generator along closed fibre}, there exist open subschemes $W_i \supset Z_i$ of $V$ for $i = 0,1$ such that $h_{\@H}^*(\@I_T)|_{W_0}$ is generated by $f_{\@H}^*(r)$ and $h_{\@H}^*(\@I_T)|_{W_1}$ is generated by $f_{\@H}^*(r^{\prime})$. The morphisms $Z_i \to \#A^1_k$ are \'etale and thus there exists a point $z \in \#A^1_k \subset \#A^1_U$ lying in the image of both $Z_i \to \#A^1_k$ for $i = 0,1$. Thus, there exist points $z_i \in Z_i$ which map to $z$ under the morphism $V \to \#A^1_U$. Thus, we obtain the point $(z_1, z_2) \in V \times_{\#A^1_U} V$ and there exists a point $z_3 \in W$ which maps to $(z_1,z_2)$ under the morphism $W \to V \times_{A^1_U} V$. Let $g_1, g_2$ denote the two compositions
\[
W \to V \times_{\#A^1_U} V \stackrel{pr_i}{\to} V 
\]
where $pr_i: V \times_{\#A^1_U} V \to V$ is the projection on the $i$-th factor for $i = 1,2$. Thus $g_1^*(h_{\@H}^*(\@I_T))_{z_3}$ is generated by $g_1^*(f_{\@H}^*(r))$ and $g_2^*(h_{\@H}^*(\@I_T))_{z_3}$ is generated by $g_2^*(f_{\@H}^*(r^{\prime}))$. Since the morphisms $h_{\@H} \circ g_1$ and $h_{\@H} \circ g_2$ are $(n-1)$-ghost chain homotopic, the induction hypothesis implies that the ideals $\<g_1^*f_{\@H}^*(r)\>$ and $\<g_2^*f_{\@H}^*(r^{\prime})\>$ of $\@O_{W,z_3}$ are equal. It is easily seen that the morphisms $f_{\@H} \circ g_1$ and $f_{\@H} \circ g_2$ from $W \to U$ are identical. Since this is a smooth morphism, it is faithful and thus the equality $\<g_1^*f_{\@H}^*(r)\> = \<g_2^*f_{\@H}^*(r^{\prime})\>$ implies that $\<r\> = \<r^{\prime}\>$ in $R$ as desired. 

Now we need to prove that the ideal sheaf $h^*_{\@H}(\@I_T)$ is generated by $f^*_{\@H}(r)$. The above arguments show that there exists an open subscheme $V_0 \subset V$ containing the closed fiber $f^{-1}_{\@H}(u)$ such that $h_{\@H}^*(\@I_T)|_{V_0}$ is generated by $f^*_{\@H}(r)$. Of course, it is possible that $V \backslash V_0$ is non-empty. Suppose $v \in V \backslash V_0$ and let $f_{\@H}(v) = z \in U$. We define $U_z = \Spec \@O_{U,z}^h$. Applying the above arguments for $U_z$ instead of $U$, we see that there exists an element $\~r \in \@O_{U,z}^h$ such that $h_{\@H}(\@I_T)_{v^{\prime}}$ is generated by $f_{\@H}^*(\~r)$ for every $v^{\prime}$ in $f_{\@H}^{-1}(z)$ (in particular for $v^{\prime} = v$). But since $f_{\@H}^{-1}(z) \cap V_0$ is non-empty, we see that $\<\~r\> = r\@O_{U,z}^h$. Thus we see that $h^*_{\@H}(\@I_T)|_V$ is generated by $f_{\@H}^*(r)$. Now, by Lemma \ref{lemma extending local generator from closed subscheme}, it easily follows that $h^*_{\@H}(\@I_T)|_{\Sp(\@H_i)}$ is also generated by $f_{\@H}^*(r)$ for every $(n-1)$-ghost homotopy appearing in $\@H_W$. This completes the proof. 
\end{proof}

\begin{remark}
\label{remark local generator ghost homotopy}
To clarify the geometric intuition behind Proposition \ref{proposition local generator ghost homotopy}, we note a simple consequence. Observe that $\alpha^*(\@I_T)$ is the unit ideal if and only if $\alpha$ maps the closed point of $U$ into $T$. Thus, we see that $\alpha$ maps the closed point of $U$ into $T$ if and only if $\alpha'$ does so too. Now, if $u'$ is any other point of $U$, we may apply this argument to $\Spec(\@O_{U,u'})$ and conclude that $\alpha$ maps $u'$ into $T$ if and only if $\alpha'$ does so too. As far as $\~X$ is concerned, the above proposition says that an $n$-ghost homotopy of $U$ on $\~X$ can connect the lifts of $\alpha_1$ and $\alpha_2$ only if the schemes $\alpha_1^{-1}(T)$ and $\alpha_2^{-1}(T)$ are the same. Of course, this is only a necessary condition and may not be sufficient for $\alpha_1$ and $\alpha_2$ to be $n$-ghost homotopic. 
\end{remark}

\subsection{Applications to birationally ruled surfaces}
\label{subsection homotopies surfaces}

In this subsection, we will apply the results of Section \ref{subsection general result} to the context of birationally ruled smooth surfaces. 

\begin{lemma}
\label{lemma topological ruled surfaces}
Let $C$ be an $\#A^1$-rigid, essentially smooth, irreducible scheme of dimension $1$ over $k$. Let $\pi_0: E \to C$ be a $\#P^1$-bundle and let $\phi: X \to E$ be a birational morphism. Let $U$ be any essentially smooth, irreducible scheme over $k$ and let $\gamma: U \to C$ be a morphism which maps the generic point of $U$ to the generic point of $c$. Let $\alpha_1, \alpha_2: U \to X$ be two $\gamma$-morphisms which are connected by an $n$-ghost homotopy $\@H$ for some $n \geq 0$. Let $c$ be a closed point of $C$ and let $F$ be a component of $\pi^{-1}(c)$. Then, for any point $u \in \gamma^{-1}(c)$, we have the following:
\begin{itemize}
\item[(1)] $\alpha_1(u) \in F$ if and only if $f_{\@H}$ maps the fiber $\Sp(\@H)_u:= \Sp(\@H) \times_{U,u} \Spec(\kappa(u))$ (viewed as a closed subscheme of $\Sp(\@H)$) into $F$. 
\item[(2)] $\alpha_1(u) \notin F$ if and only if $f_{\@H}(\Sp(\@H)_u) \cap F = \emptyset$.
\end{itemize}
\end{lemma}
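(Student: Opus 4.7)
The plan is to induct on the number $N$ of point blowups in a factorisation of the birational morphism $\phi : X \to E$, with Proposition~\ref{proposition local generator ghost homotopy} as the primary tool at each stage. First I will reduce to the case where $U$ is a henselian local scheme with closed point $u$, by replacing $U$ with $\Spec \@O_{U,u}^h$ and pulling $\alpha_1, \alpha_2$, and $\@H$ back along the canonical morphism; the assertions (1) and (2) are local at $u$, and ghost homotopies restrict naturally to Nisnevich covers of the localised base. Since $X$ is a smooth projective surface birationally dominating $E$, $\phi$ factors as $X = X_N \overset{\phi_N}{\to} X_{N-1} \to \cdots \overset{\phi_1}{\to} X_0 = E$, where $\phi_i$ is the blowup of a closed point $p_i \in X_{i-1}$ with exceptional divisor $E_i \subset X_i$; write $\sigma_i := \phi_{i+1} \circ \cdots \circ \phi_N$.

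The central step handles the last blowup $\phi_N : X \to X_{N-1}$. Working over the $\A^1$-rigid base $C$ and invoking the reduction of Section~\ref{subsection general result} (which lets us treat $X \to C$ as a scheme over $U$ via $\gamma$), I apply Proposition~\ref{proposition local generator ghost homotopy} to $\phi_N$ with $T = \{p_N\}$, to the $\gamma$-morphisms $\phi_N \alpha_1, \phi_N \alpha_2 : U \to X_{N-1}$ (which lift to $X$ through $\alpha_j$), and to the ghost homotopy $\phi_N \@H$ (lifted by $\@H$). This produces $r \in R := \@O_{U,u}^h$ such that
\[
(\phi_N \alpha_j)^*(\@I_{p_N}) = \<r\> \text{ for } j = 1, 2, \qquad (\phi_N h_{\@H})^*(\@I_{p_N}) = \<f_{\@H}^*(r)\>.
\]
Because $f_{\@H}$ sends every point of $\Sp(\@H)_u$ to $u$, the function $f_{\@H}^*(r)$ is constant of value $r(u)$ on that fibre, whence
\[
\alpha_1(u) \in E_N \iff r \in \fr{m}_u \iff h_{\@H}(\Sp(\@H)_u) \subseteq E_N,
\]
together with its negation; this already settles both (1) and (2) when $F = E_N$.

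When $F \neq E_N$, the component $F$ is the strict transform of a unique component $F'$ of $\pi_{X_{N-1}}^{-1}(c)$, and the inductive hypothesis applied in $X_{N-1}$ to $\phi_N \alpha_j$ and $\phi_N \@H$ yields (1) and (2) for $F'$. I transfer these to $F$ by a case split: if $p_N \notin F'$, then $\phi_N^{-1}(F') = F$ set-theoretically and the conclusion passes through directly from the inductive hypothesis; if $p_N \in F'$, then $\phi_N^{-1}(F') = F \cup E_N$ meeting at the single point $q := F \cap E_N$, and I combine the inductive statement for $F'$ with the dichotomy for $E_N$ above, splitting further according to whether $\alpha_1(u)$ lies in $E_N$. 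The base case $N = 0$ is immediate: $\pi^{-1}(c)$ is irreducible, so $F = \pi^{-1}(c)$, and Lemma~\ref{lemma strategy ruled} forces the ghost homotopy to respect the fibres of $\pi$, giving $h_{\@H}(\Sp(\@H)_u) \subseteq \pi^{-1}(c) = F$ unconditionally. The principal obstacle I anticipate is the sub-case of the inductive step where $\alpha_1(u)$ coincides with the intersection point $q$: there the dichotomy above produces only $h_{\@H}(\Sp(\@H)_u) \subseteq E_N$, whereas (1) demands confinement of that image to the single point $q$, and disentangling this will likely require either a further iteration of Proposition~\ref{proposition local generator ghost homotopy} along the chain of strict transforms isolating $F$ from the other components, or a finer use of the behaviour of $\gamma$ at $u$ to rule out this configuration.
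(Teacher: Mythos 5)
Your proposal correctly identifies the shape of the argument (pass to the henselization, factor $\phi$ as a chain of point blowups, apply Proposition~\ref{proposition local generator ghost homotopy}, run a case analysis), and you have honestly flagged the sticking point yourself. The difficulty you locate is a genuine gap, and it is not a cosmetic one: when $\alpha_1(u)$ is the node $q = F \cap E_N$, the dichotomy for $E_N$ gives $h_{\@H}(\Sp(\@H)_u) \subseteq E_N$ and the inductive hypothesis for $F' = \phi_N(F)$ gives $h_{\@H}(\Sp(\@H)_u) \subseteq \phi_N^{-1}(F') = F \cup E_N$, which is already implied by the first inclusion. No choice of order of blowups rescues this, since the last exceptional divisor must always be a $(-1)$-curve, while $F$ typically has $F \cdot F < -1$ (e.g.\ a line that later had a point on it blown up). Applying the dichotomy at earlier stages $X_i$ where an ancestor of $F$ first appears as an exceptional divisor also fails, because the preimage of that ancestor in $X$ again contains $E_N$ and other components, and the intersections collapse back to $E_N$. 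Your suggested fixes (``iterating along the chain of strict transforms,'' ``finer use of $\gamma$'') do not get traction: the configuration is not ruled out, and the rigidity you need — that $\Sp(\@H)_u$ is crushed to the single point $q$ — is a genuinely stronger statement than what the pointwise blowup induction provides.

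The paper's route sidesteps the problem entirely. Since every component $F$ of the singular fiber over $c$ satisfies $F \cdot F < 0$ as soon as there is more than one component, Artin's contractibility criterion (allowing a possibly \emph{singular} contraction) produces a surface $X'$ and a morphism $p : X \to X'$ contracting $F$ itself to a closed point, so that $X \to X'$ is the blowup of a closed subscheme supported at a single point whose exceptional divisor is exactly $F$. Proposition~\ref{proposition local generator ghost homotopy} is stated with no smoothness hypothesis on the ambient scheme, precisely so that it can be applied to this singular $X'$; it then yields a single $r \in R$ controlling $\alpha_i^*(\@I_T)$ and $h_{\@H}^*(\@I_T)$ simultaneously, and whether $r$ is a unit decides both (1) and (2) for $F$ in one stroke. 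The key idea you are missing, then, is not a more delicate bookkeeping within the point-blowup factorization, but the freedom to contract $F$ directly (possibly onto a singular point) and invoke Proposition~\ref{proposition local generator ghost homotopy} in that greater generality.
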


\begin{proof} Standard results about the birational geometry of smooth surfaces imply that $\phi$ can be written as a composition 
\[
X = X_m \stackrel{\pi_{m}}{\to} X_{m-1} \stackrel{\pi_{m-1}}{\to} \cdots \stackrel{\pi_1}{\to} X_0 = E
\]
where each $\pi_i: X_i \to X_{i-1}$ is a blowup at a smooth closed point of $X_{i-1}$. Let $\pi:X \to C$ denote the composition $\pi_0 \circ \phi$. If $c$ is any closed point of $C$, then the following statements are easily proved by induction on $m$:
\begin{itemize}
\item[(1)] $\pi^{-1}(c)$ is a connected scheme, each component of which is isomorphic to $\#P^1_k$. 
\item[(2)] If $\pi^{-1}(c)$ has more than one component, then for any component $F$ of $\pi^{-1}(c)$, the intersection number $F \cdot F$ is negative. On the other hand, if $\pi^{-1}(c)$ is irreducible, then $[\pi^{-1}(c)] \cdot [\pi^{-1}(c)] = 0$, where $[\pi^{-1}(C)]$ is the cycle associated to the subscheme $\pi^{-1}(c)$.  
\end{itemize}

If $m = 0$, there is nothing to prove. Assume that $m \geq 1$. As we noted above, the intersection number $F \cdot F$ is negative.  Hence, by Artin's contractibility criterion \cite[Theorem  2.3]{Artin}, there exists a (possibly singular) surface $X'$ and a morphism $p: X \to X'$ which maps $F$ to a single closed point of $X'$. Thus, $X$ can be obtained from $X'$ by blowing up closed subscheme supported at a closed point. 

Without any loss of generality, the scheme $U$ can be replaced by its henselization at the point $u$. Thus, we may assume that $U = \Spec(R)$ where $R$ is a henselian local ring and that $u$ is the closed point of $U$. 

Let $X_{\gamma} = X \times_{\pi,C,\gamma} U$ and let $E_{\gamma} = E \times_{\pi_0, C, \gamma} U$. Let $S$ denote the closed subscheme of $X'$ such that $X$ is obtained from $X'$ by blowing up $S$. Then $X_{\gamma}$ is obtained from $X'_{\gamma}$ by blowing up the scheme $T = S \times_{C,\gamma} U$. For $i = 1,2$, let $\beta_i: U \to X'_{\gamma}$ denote the morphism induced by the map $p \circ \alpha_i: U \to X'$. The homotopy $\@H$ induces an $n$-ghost homotopy $\~{\@H}$ connecting $\beta_1$ and $\beta_2$. By Proposition \ref{proposition local generator ghost homotopy}, there exists an element $r \in R$ such that the ideal sheaves $\beta_i^*(\@I_{T}) = (p \circ \alpha_i)^*(\@I_{S})$ and $f_{\~{\@H}}^*(\@I_{T}) = f_{\@H}^*(\@I_S)$ are generated by $r$.  If $\alpha_1(u) \in F$, then $r$ is a non-unit. Thus, the restriction of the ideal sheaf $f_{\@H}^*(\@I_T)$ to any point $u'$ of $\Sp(\@H)_u$ is not a unit ideal, which implies that $f_{\@H}$ maps $u'$ into $F$. 

On the other hand, if $\alpha_1(u) \notin F$, then $r$ is a unit. Thus, the restriction of the ideal sheaf $f_{\@H}^*(\@I_T)$ to any point $u'$ of $\Sp(\@H)_u$ is not a unit ideal, which implies that $f_{\@H}(u') \notin F$.
\end{proof}

\begin{remark}
\label{remark homotopies on ruled surfaces}
Suppose that $Z$ is the union of all lines which do not contain $\alpha_1(u)$. Then it follows from the above result that the $n$-ghost homotopy $\@H$ factors through $X \backslash Z$. 

In fact, we can say a little more. First, we observe that any point of $\pi^{-1}(c)$ lies on at most two components of $\pi^{-1}(c)$. Thus, $\alpha_1(u)$ may lie on one or two components. If a point $\alpha_1(u)$ is the intersection of two components $F_1$ and $F_2$, then the entire fiber $\@Sp(\@H)_u$ is mapped to the point $\alpha_1(u)$. 

Suppose that $\alpha_1(u)$ only lies on one component of $\pi^{-1}(c)$, which we denote by $F$. As above, let $Z$ be the union of all the other components. Then $\Sp(\@H)_u$ is mapped into $F \backslash Z$. If $F \cap Z$ consists of at least two points, then $F \backslash Z$ is $\#A^1$-rigid. Since the restriction of $f_{\@H}$ to $\Sp(\@H)_u$ defines an $n$-ghost homotopy of $\Spec(\kappa(u))$, it follows that this $n$-ghost homotopy must be constant. In other words, in this special case too $\Sp(\@H)_u$ is mapped to the point $\alpha_1(u)$. However, we note that the situation is quite different if $F \cap Z$ consists of only one point.   
\end{remark}

\section{Geometry of ruled surfaces}
\label{section geometry of ruled surfaces}

Our proof of the main theorem (Theorem \ref{theorem ruled iterations of S}) is based on the classification of smooth projective surfaces.  We will therefore need a formalism to handle blowups of points on a minimal ruled surface.  We will briefly review some elementary constructions on schemes in Section \ref{subsection constructions}, all of which basically follow from resolution of indeterminacy and universal property of blowups.  The material in Section \ref{subsection constructions} should be quite obvious to the expert, but we present the proofs for the sake of completeness and since we use nonstandard notation to facilitate the book-keeping needed for our purposes.  Section \ref{subsection nodal blowups of ruled surfaces} gives a systematic way of handling a special class of blowups of a minimal ruled surface and Section \ref{subsection etale cover} contains a key result about local geometry of such blowups (Theorem \ref{theorem etale cover}), which plays a crucial role in our proof of Theorem \ref{theorem ruled iterations of S}. 

\subsection{Review of some elementary constructions on schemes}
\label{subsection constructions}

We fix a base scheme $\Spec R$, where $R$ is a noetherian domain.  All the $R$-schemes considered in this subsection will be separated, integral and of finite type over $\Spec R$.  We comment that some of the restrictions we have placed on the base ring $R$ as well as the $R$-schemes under consideration may not be necessary, but they allow us to write simpler proofs and are sufficient for our purposes.

Let $X$ be an integral, separated $R$-scheme and let $f: X \dashrightarrow \#P^1$ be a rational function.  We will construct three kinds of $R$-schemes $Z$ along with structure morphisms $Z \to X$ such that the pullback of $f$ to $Z$ has certain special properties.

\subsubsection{Resolving indeterminacies of a rational function}

Let $T_0, T_1$ denote homogeneous coordinates on $\#P^1_R$, so that $\#P^1_R = \Proj R[T_0, T_1]$. Recall that for any $R$-scheme $Z$ and a morphism $\pi: Z \to \#P^1_R$, we obtain an invertible sheaf $\@L := \pi^*(\@O(1))$ on $Z$ and a pair of global sections $s:= \pi^*(T_0)$ and $t:= \pi^*(T_1)$ which generate $\@L$.  Conversely, given a line bundle $\@L$ on an $R$-scheme $Z$ and an ordered pair of global sections $s,t$ generating $\@L$, one can construct a morphism $\pi: Z \to \#P^1_R$ such that there exists an isomorphism $\pi^*(\@O(1)) \to \@L$ which maps $\pi^*(T_0)$ to $s$ and $\pi^*(T_1)$ to $t$. 

Suppose we are given a line bundle $\@L$ on an $R$-scheme $X$ and an ordered pair of global sections $(s,t)$ such that $s$ and $t$ do not generate $\@L$. Then the set of all points $x \in X$ where $s_x$ and $t_x$ generate $\@L_x$ is an open subset $U$ of $X$.  If $U$ is nonempty (and hence, dense in $X$), we obtain a rational function on $X$.  We would like to know when this rational function can be extended to a morphism from $X$ to $\#P^1_R$. 

Recall that on an integral scheme $X$, any invertible sheaf is isomorphic to a subsheaf of the constant sheaf $\@K$ of rational functions on $X$.  Thus, for any invertible sheaf $\@L$ on $X$, every global section of $\@L$ may be considered as an element of $\@K(X)$. In particular, given two global sections $s,t$ of $\@L$, we will speak of the ratio $s/t$ as being an element of $\@K(X)$.

\begin{lemma}
\label{lemma extending rational functions}
Let $X$ be a separated, integral $R$-scheme. Let $\@L$ be an invertible sheaf on $X$ and let $s,t$ be global sections defining a rational function on $X$. Then, $f$ can be extended to a morphism $X \to \#P^1_R$ if and only if the sections $s$ and $t$ generate an invertible subsheaf of $\@L$. 
\end{lemma}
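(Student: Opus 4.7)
The plan is to exploit the standard correspondence between $R$-morphisms $Y \to \#P^1_R = \Proj R[T_0, T_1]$ and isomorphism classes of triples $(\@N, u_0, u_1)$, where $\@N$ is an invertible sheaf on $Y$ and $u_0, u_1 \in \Gamma(Y, \@N)$ generate $\@N$, which is the content of the paragraph immediately preceding the lemma.

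For the ``if'' direction, suppose that $s$ and $t$ generate an invertible subsheaf $\@M \subseteq \@L$. The triple $(\@M, s, t)$ then determines a morphism $\bar f \colon X \to \#P^1_R$ by the recalled correspondence. On the dense open $U \subseteq X$ on which $s, t$ generate $\@L$ itself, one has $\@M|_U = \@L|_U$ as subsheaves of $\@L|_U$, so $\bar f|_U$ coincides with the morphism determined by $(\@L|_U, s|_U, t|_U)$, which is precisely the given rational function $f$. Hence $\bar f$ is the desired extension.

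For the converse, I would assume that $f$ extends to an $R$-morphism $\bar f \colon X \to \#P^1_R$. Setting $\@N := \bar f^*\@O(1)$ together with $s' := \bar f^*T_0$ and $t' := \bar f^*T_1$ produces an invertible sheaf on $X$ with two global sections that generate it. The task is then to show that the coherent subsheaf $\@M \subseteq \@L$ generated by $s$ and $t$ is invertible. I would check this on affine opens $V = \Spec A$ chosen small enough that both $\@L|_V$ and $\@N|_V$ are trivial, with generators $e_\@L$ and $e_\@N$ respectively. Writing $s|_V = a\, e_\@L$, $t|_V = b\, e_\@L$, $s'|_V = a'\, e_\@N$ and $t'|_V = b'\, e_\@N$ with $a, b, a', b' \in A$, the fact that $(s', t')$ generate $\@N$ forces $(a', b') = A$, while the equality of the two descriptions of the rational function, together with integrality of $X$, forces $ab' = a'b$ in $A$.

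The key technical step, which I view as the only nontrivial point, is to deduce that $(a, b)$ is a principal ideal of $A$. Writing $1 = x a' + y b'$ with $x, y \in A$ and setting $c := x a + y b$, the relation $ab' = a'b$ yields $a'c = x a a' + y a b' = x a a' + y a' b = a'(xa + yb)$ and similarly $b'c = b'(xa + yb)$, while directly $a(xa' + yb') = a$ and $b(xa' + yb') = b$, so that $a = a'c$ and $b = b'c$. Consequently $(a, b) \subseteq (c) \subseteq (a, b)$, hence $(a, b) = (c)$ is principal. Therefore $\@M|_V$ is free of rank one, and gluing over a covering of $X$ by such affine opens shows that $\@M$ is an invertible subsheaf of $\@L$, as required.
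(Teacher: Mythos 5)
Your proof is correct, and both directions follow the same overall structure as the paper's argument (appeal to the standard correspondence between morphisms to $\P^1_R$ and pairs of generating sections, then show the subsheaf $\@M = \<s,t\>$ is locally principal). The one place you genuinely diverge is in the converse direction. The paper argues stalk-by-stalk: at any $x \in X$, one of $u/v$ or $v/u$ lies in $\@O_{X,x}$ because $u,v$ generate an invertible sheaf (Nakayama), so the equality $s/t = u/v$ in $\@K(X)$ forces one of $s/t$, $t/s$ to be regular at $x$, whence $\@M_x$ is principal. You instead work on affine opens trivializing both $\@L$ and $\bar f^*\@O(1)$, translate everything into ring elements $a,b,a',b'$, and use the Bezout relation $1 = xa'+yb'$ together with $ab'=a'b$ to produce an explicit generator $c = xa+yb$ of $(a,b)$; the computation $a = a'c$, $b = b'c$ (and conversely $c \in (a,b)$) is clean and avoids the case split in the paper's stalk argument. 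Your version is a little more computational but more self-contained; the paper's version is shorter once one takes for granted the local dichotomy for generating pairs of an invertible sheaf. A very minor omission in your writeup, handled explicitly in the paper, is the degenerate case where one of $s$ or $t$ is identically zero; in that case the rational function is constant and $\@M$ is already principal, so the lemma is trivial. This does not affect the correctness of the substance of your argument.
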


\begin{proof}
Suppose we are given the rational function $f$ as above. Clearly, $s$ and $t$ are not both equal to the zero section. Suppose that $t \neq 0$. Then $f$ is clearly defined on the open subset $U$, consisting of points $x \in X$ such that $s_x$ and $t_x$ generate $\@L_x$. Let $\@M$ be the subsheaf of $\@L$ generated by $s$ and $t$. Note that $\@M|_{U} = \@L|_{U}$. 

Suppose that the sheaf $\@M$ is locally principal. Then, as the sections $s,t$ generate $\@M$, by the above comments we obtain a morphism $\~f: X \to \#P^1_R$ such that $\~f^*(\@O(1))$ is isomorphic to $\@M$ via a morphism that maps $\~f^*(T_0)$ to $s$ and $\~f^*(T_1)$ to $t$. Clearly, $\~f|_{U} = f|_{U}$.  

Conversely, suppose that $f$ can be extended to a morphism $\~f: X \to \#P^1$. Thus, there exists an invertible sheaf $\@N$ on $X$ with global sections $u,v$ which generate $\@N$ and an isomorphism $\~f^*(\@O(1)) \to \@N$ which maps $f^*(T_0)$ to $u$ and $f^*(T_1)$ to $v$. On the set $U$ defined above, this morphism agrees with $f$ and so we must have $s/t = u/v$ as elements of $\@K$. As $u$ and $v$ generate an invertible sheaf on $X$, at any point $x \in X$, either $u/v \in \@O_{X,x}$ or $v/u \in \@O_{X,x}$. Thus we also have $s/t \in \@O_{X,x}$ or $t/s \in \@O_{X,x}$ for every $x \in X$. This shows that the sheaf $\@M$ is locally principal. 
\end{proof}

\begin{proposition}
\label{proposition resolving indeterminacies}
Let $X$ be an integral scheme over $R$. Let $f$ be a rational function on $X$. There exists a pair $(X[f], \pi_f)$ consisting of a scheme $X[f]$ and a morphism $\pi_f: X[f] \to X$ satisfying the following properties:
\begin{enumerate}[label=$(\alph*)$]
\item The pullback of $f$ to $X[f]$ can be extended to a morphism $\~f: X[f] \to \#P^1_R$. 
\item Given any $R$-scheme $Y$ and a morphism $\phi: Y \to X$ such that $\phi^*(f)$ extends to a morphism $Y \to \#P^1_R$, there exists a unique map $\~{\phi}: Y \to X[f]$ such that $\phi = \pi_f \circ \~{\phi}$. 
\end{enumerate}
\end{proposition}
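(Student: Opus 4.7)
The plan is to construct $X[f]$ as the scheme-theoretic closure of the graph of $f$ inside $X \times_R \mathbb P^1_R$. Let $U \subseteq X$ be the dense open on which $f$ is defined as a morphism, and let $\Gamma_f \subseteq U \times_R \mathbb P^1_R$ be the image of the closed immersion $(\mathrm{id}_U, f|_U)$; since $\mathbb P^1_R$ is separated over $R$, the subscheme $\Gamma_f$ is closed in $U \times_R \mathbb P^1_R$, hence locally closed in $X \times_R \mathbb P^1_R$. I would take $X[f]$ to be the scheme-theoretic closure of $\Gamma_f$ in $X \times_R \mathbb P^1_R$, and define $\pi_f$ and $\widetilde f$ to be the two projections. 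By construction $\pi_f$ is proper and restricts to an isomorphism over $U$, while $\widetilde f$ restricts on $\pi_f^{-1}(U)$ to $f \circ \pi_f$; this immediately gives $(a)$.

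For the universal property $(b)$, given $\phi : Y \to X$ together with an extension $g : Y \to \mathbb P^1_R$ of the rational map $\phi^{*}(f)$, I would form the morphism $\widetilde{\phi} := (\phi, g) : Y \to X \times_R \mathbb P^1_R$. The hypothesis that $\phi^{*}(f)$ is defined as a rational map forces the open subscheme $V := \phi^{-1}(U)$ to be nonempty, and integrality of $Y$ (part of the standing conventions of this subsection) then makes $V$ scheme-theoretically dense in $Y$. On $V$ one has $g|_V = f \circ \phi|_V$, so $\widetilde{\phi}|_V$ factors through $\Gamma_f \subseteq X[f]$; since $V$ is dense in the reduced scheme $Y$ and $X[f]$ is a closed subscheme of $X \times_R \mathbb P^1_R$, the morphism $\widetilde{\phi}$ itself factors through $X[f]$. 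Composing this factorization with $\pi_f$ recovers $\phi$, and any two lifts of $\phi$ which are compatible with $g$ must agree on the dense open $V$ (where $\pi_f$ is an isomorphism) and hence everywhere, giving uniqueness.

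The only subtle point, which I expect to be the main bookkeeping obstacle, is the precise interpretation of the hypothesis in $(b)$: the clause ``$\phi^{*}(f)$ extends to a morphism $Y \to \mathbb P^1_R$'' tacitly requires $\phi^{-1}(U)$ to be nonempty, after which integrality of $Y$ upgrades this to the scheme-theoretic density needed for the closure-of-graph argument. An entirely equivalent alternative is to define $X[f]$ as the blowup of $X$ along the ideal sheaf of indeterminacy of $f$: by Lemma~\ref{lemma extending rational functions}, a morphism $Y \to X$ admits a lift along which $f$ becomes a morphism to $\mathbb P^1_R$ precisely when the pullback of this ideal sheaf is invertible on $Y$, and the universal property in $(b)$ then follows directly from the universal property of the blowup.
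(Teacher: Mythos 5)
Your proposal is correct, but it takes a genuinely different route from the paper. You construct $X[f]$ as the scheme-theoretic closure of the graph of $f$ inside $X \times_R \mathbb{P}^1_R$ and verify the universal property by a density argument: lifts exist because the graph is scheme-theoretically dense in its closure and $V = \phi^{-1}(U)$ is scheme-theoretically dense in the integral scheme $Y$, and lifts are unique because $X[f]$ is separated. The paper instead reduces to the case when $X$ is affine (so that the line bundle associated with $f$ can be trivialized), sets $\mathcal{I} = \langle s, t\rangle \subseteq \mathcal{O}_X$, defines $X[f]$ as the blowup of $X$ at $\mathcal{I}$, and then invokes Lemma~\ref{lemma extending rational functions} to translate ``$\phi^*(f)$ extends'' into ``$\phi^{-1}(\mathcal{I})\cdot \mathcal{O}_Y$ is invertible,'' after which the universal property of blowups gives (b) immediately. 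The two constructions produce isomorphic schemes. Your graph-closure route is global from the outset and avoids the affine reduction, but it leans on the standing integrality/separatedness convention for the density and uniqueness steps (which you correctly flag as the main bookkeeping point); the paper's route is more tightly coupled to Lemma~\ref{lemma extending rational functions}, which it needs in the subsequent development anyway, so everything follows from a single characterization. You also note the blowup alternative at the end, so you have essentially identified the paper's argument as well.

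Two minor points worth recording. First, be a bit careful with the claim that $\pi_f$ restricts to an isomorphism over $U$: this uses that $\Gamma_f$ is \emph{closed} in $U \times_R \mathbb{P}^1_R$ (true since $\mathbb{P}^1_R \to \Spec R$ is separated), so that the closure of $\Gamma_f$ in $X\times_R \mathbb{P}^1_R$ meets $U\times_R \mathbb{P}^1_R$ exactly in $\Gamma_f$; you do state the key separatedness fact, so this is fine. Second, for uniqueness in (b) the lift $\widetilde\phi$ should not depend on the choice of extension $g$ of $\phi^*(f)$; this is automatic here because $Y$ is integral and $\mathbb{P}^1_R$ is $R$-separated, so the extension $g$, if it exists, is itself unique.
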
 

\begin{proof}
The statement of the theorem is clearly local with respect to $X$ and so it will suffice to prove the result when $X$ is affine. Thus, we may assume that $f$ is given by an ordered pair $(s,t)$ of global sections of the trivial bundle (so that $f = s/t$ as an element of the function field of $X$). Let $\@I$ be the ideal subsheaf of $\@O_X$ generated by $s,t$. Let $\pi_f: X[f] \to X$ be the blowup of $X$ at the ideal sheaf $\@I$. Then, the ideal sheaf $\pi_f^{-1}(\@I) \cdot \@O_{X[f]}$ is invertible. Thus, by Lemma \ref{lemma extending rational functions}, the rational function $\pi_f^*(f)$ extends to a morphism $\~f: X[f] \to \#P^1_R$. (This construction is given in \cite[Chapter II, Example 7.17.3]{Hartshorne}.)

Given any pair $(Y,\phi)$ as in property (b) in the statement of the theorem, we see by Lemma \ref{lemma extending rational functions} that the ideal sheaf $\phi^{-1}(\@I) \cdot \@O_{X[f]}$ is invertible. Thus, we obtain the result  by applying the universal property of blowups (see \cite[Chapter II, Prop. 7.14]{Hartshorne}). 
\end{proof}

Note that even if $f$ and $g$ both induce morphisms from $X \to \#P^1_R$, the product $fg$ may not do so. However, the following lemma is easy to prove:
\begin{lemma}
Let $X$ be a separated, integral $R$-scheme. Let $f \in \@O^{\times}_X(X)$ and let $g$ be a rational function on $X$. Then:
\begin{enumerate}[label=$(\alph*)$]
\item $g$ induces a morphism from $X$ to $\#P^1_R$ if and only if $fg$ induces a morphism from $X$ to $\#P^1_R$. 
\item $X[g] \simeq X[fg]$ as $X$-schemes. 
\end{enumerate}
\end{lemma}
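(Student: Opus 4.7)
The proof is essentially bookkeeping on top of Lemma \ref{lemma extending rational functions} and Proposition \ref{proposition resolving indeterminacies}: the key point is that multiplying one of the two generating sections by a global unit changes neither the locally principal subsheaf they generate nor the ideal sheaf one would blow up. I would organize the argument as follows.

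First I would reduce to the local/affine picture used inside the proof of Proposition \ref{proposition resolving indeterminacies}. Working on an affine open $\Spec A \subset X$ where some chosen line bundle representing $g$ is trivial, write $g = s/t$ with $s,t \in A$. Since $f \in \@O_X^{\times}(X)$, the restriction $f|_{\Spec A} \in A^{\times}$, and we may represent $fg$ by the pair $(fs,t)$ using the same trivial line bundle. The crucial observation, which I would state once and then invoke repeatedly, is
\[
\langle s,t\rangle = \langle fs,t\rangle \quad\text{as ideal subsheaves of } \@O_X,
\]
because $f$ is an invertible section, so $fs = f\cdot s$ and $s = f^{-1}\cdot (fs)$ each sit in the other ideal.

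For part $(a)$, I would invoke Lemma \ref{lemma extending rational functions}: a rational function presented by $(s,t)$ on a line bundle $\@L$ extends to a morphism $X \to \#P^1_R$ if and only if the subsheaf of $\@L$ generated by $s$ and $t$ is invertible. Applying this to $(s,t)$ and $(fs,t)$, and using the equality of subsheaves displayed above (which holds in any local trivialisation, and hence globally after passing to the appropriate line bundle), the two extension criteria coincide. Hence $g$ extends if and only if $fg$ does.

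For part $(b)$, I would appeal to the explicit construction in Proposition \ref{proposition resolving indeterminacies}: locally, $X[g]$ is the blowup of $X$ along the ideal sheaf $\@I_g$ generated by $s$ and $t$, and $X[fg]$ is the blowup along $\@I_{fg}$ generated by $fs$ and $t$. By the boxed equality, $\@I_g = \@I_{fg}$ on each affine chart of a trivialisation, and hence globally. Since the blowup depends only on the ideal sheaf, we obtain a canonical isomorphism $X[g] \simeq X[fg]$ of $X$-schemes; alternatively, this can be read off directly from the universal property $(b)$ of Proposition \ref{proposition resolving indeterminacies} combined with part $(a)$, since for any $\phi: Y \to X$ the pullback $\phi^*(g)$ extends to $\#P^1_R$ precisely when $\phi^*(fg)$ does, so $X[g]$ and $X[fg]$ represent the same functor on $X$-schemes. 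There is no real obstacle here: the only thing to be careful about is making the identification of ideal sheaves (or of extension data) once and for all, rather than letting the presentation $g = s/t$ obscure the unit-invariance.
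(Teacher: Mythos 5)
The paper states this lemma without proof, remarking only that it ``is easy to prove,'' so there is no official argument to compare against. Your proposal is correct and is exactly the natural one: noting that $f\in\@O_X^\times(X)$ implies $\<s,t\> = \<fs,t\>$ as subsheaves of $\@L$ (so the invertibility criterion of the preceding extension lemma for $(s,t)$ and $(fs,t)$ coincides, giving (a)), and then deducing (b) either by observing the blowup ideals agree or, most cleanly, from the universal property in Proposition~\ref{proposition resolving indeterminacies}: since $\phi^*(f)$ is a unit for every $\phi\colon Y\to X$, part (a) applied on $Y$ shows $\phi^*(g)$ extends to $\#P^1_R$ if and only if $\phi^*(fg)$ does, so $X[g]$ and $X[fg]$ represent the same functor on $X$-schemes and are therefore canonically $X$-isomorphic. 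Your caution about unit-invariance of the presentation is the right thing to flag, and the universal-property route sidesteps any worry about whether the constructed blowup depends on the choice of $(s,t)$.
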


\begin{notation} Let $f_1, \ldots, f_n$ be rational functions on $X$. We will write $X[f_1, \ldots f_n]$ instead of $X[f_1][f_2] \cdots [f_n]$. 
\end{notation}

\subsubsection{Attaching an $n$-th root of a rational function}

Given a regular function $f: X \to \#P^1_R$ and an integer $n>1$, we would like to construct the final object in the category of $X$-schemes on which $f$ has an $n$-th root. We need to be careful about the fact that the $n$-th root may not be unique. So in order to conveniently phrase the universal property of our construction, we also pick out a specific $n$-th root of (the pullback of) $f$. 

\begin{proposition}
\label{proposition attaching roots}
Let $X$ be a separated, integral $R$-scheme and let $f: X \to \#P^1_R$ be a morphism. Let $n>0$ be an integer. Then there exists a triple  $$(X[f^{1/n}],\pi_{f,n}, f^{1/n})$$ consisting of a scheme $X[f^{1/n}]$, a morphism $\pi_{f,n}: X[f^{1/n}] \to X$ and a morphism $f^{1/n}: X[f^{1/n}] \to \#P^1_R$ such that the following conditions hold: 
\begin{enumerate}[label=$(\alph*)$]
\item The morphism $f^{1/n}$ is an $n$-th root of  $\pi^*_{f,n}(f)$. 
We will call $f^{1/n}$ the \emph{structural $n$-th root} of the triple $(X[f^{1/n}],\pi_{f,n}, f^{1/n})$. 
\item Given any triple $(Y,\phi, g)$ where $Y$ is an $R$-scheme, $\phi: Y \to X$ is a morphism and $g: Y \to \#P^1_R$ is a morphism which is an $n$-th root of $\phi^*(f)$, then there exists a unique morphism $\~{\phi}: Y \to X[f^{1/n}]$ such that $\phi = \pi_{f,n} \circ \~{\phi}$ and $\~{\phi}^*(f^{1/n}) = g$.
\end{enumerate}
\end{proposition}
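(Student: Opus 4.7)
The plan is to realize $X[f^{1/n}]$ as a fiber product against the Kummer ($n$-th power) map on $\mathbb{P}^1_R$. Let $g_n \colon \mathbb{P}^1_R \to \mathbb{P}^1_R$ denote the morphism $[T_0:T_1] \mapsto [T_0^n:T_1^n]$, equivalently the morphism attached to the pair of generating sections $(T_0^n, T_1^n)$ of $\mathcal{O}(n)$; on the affine chart $\{T_1 \neq 0\}$ with coordinate $x = T_0/T_1$, it is just $x \mapsto x^n$. The natural reading of ``$g \colon Y \to \mathbb{P}^1_R$ is an $n$-th root of $\phi^*(f)$'' is then the identity $g_n \circ g = f \circ \phi$, which specialises to the familiar relation $g^n = \phi^*(f)$ on the locus avoiding the preimage of $\{0, \infty\}$.

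With this setup in place, define
\begin{equation*}
X[f^{1/n}] := X \times_{\mathbb{P}^1_R,\,f,\,g_n} \mathbb{P}^1_R,
\end{equation*}
and take $\pi_{f,n}$ and $f^{1/n}$ to be the two projections to $X$ and $\mathbb{P}^1_R$ respectively. Property (a) is then immediate from the defining Cartesian square, which gives $g_n \circ f^{1/n} = f \circ \pi_{f,n}$, i.e., $f^{1/n}$ is an $n$-th root of $\pi_{f,n}^*(f)$. Property (b) is exactly the universal property of the fiber product: a triple $(Y, \phi, g)$ with $g_n \circ g = f \circ \phi$ produces a unique $\widetilde{\phi} \colon Y \to X[f^{1/n}]$ satisfying $\pi_{f,n} \circ \widetilde{\phi} = \phi$ and $f^{1/n} \circ \widetilde{\phi} = g$.

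The only subtlety I foresee is reconciling this construction with the running convention that all $R$-schemes in this subsection are integral. Locally on an affine open $U$ of $X$ over which $f$ is represented by an honest regular function, $\pi_{f,n}^{-1}(U)$ has the form $\Spec \mathcal{O}_U[y]/(y^n - f|_U)$, which is integral precisely when $y^n - f$ is irreducible over the function field $\mathcal{K}(X)$; otherwise the fiber product splits as a disjoint union indexed by the Galois orbits of the roots. I would handle this either by relaxing integrality for this particular construction, or by observing that for any integral $Y$ the pair $(\phi,g)$ automatically factors through a single irreducible component, so that the universal property remains valid when $X[f^{1/n}]$ is replaced by that component together with its associated structural root. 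Beyond this mild book-keeping, the proof is nothing more than an application of the universal property of fiber products.
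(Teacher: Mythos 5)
Your construction is exactly the paper's: $X[f^{1/n}]$ is defined as the fiber product $X \times_{f,\,\mathbb{P}^1_R,\,\phi_n} \mathbb{P}^1_R$ along the $n$-th power map, with both projections supplying $\pi_{f,n}$ and $f^{1/n}$, and the universal property read off directly from that of the fiber product. Your aside about integrality is a legitimate point that the paper leaves implicit, but it does not affect the substance of the argument.
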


\begin{proof}
As above, let $T_0, T_1$ be the homogeneous coordinates on $\#P^1_R$. There exists a line bundle $\@L$ on $X$ and global sections $s,t$ of $\@L$ such that $f^*(\@O(1)) = \@L$ via an isomorphism that maps $f^*(T_0)$ to $s$ and $f^*(T_1)$ to $t$. Then, as elements of $\@K(X)$, we have the equality $f = s/t$. 

Let $\phi_n: \#P^1_R \to \#P^1_R$ be the $n$-th power map given by the graded ring homomorphism 
\[
R[T_0, T_1] \to R[T_0, T_1]; \quad T_0 \mapsto T_0^n, T_1 \mapsto T_1^n. 
\]
We then define $X[f^{1/n}]$ to be the fiber product $X \times_{f, \#P^1_R, \phi_n} \#P^1_R$ with 
\[
p_1: X[f^{1/n}] = X \times_{f, \#P^1_R, \phi_n} \#P^1_R \to X, \hspace{1cm} p_2: X[f^{1/n}] = X \times_{f, \#P^1_R, \phi_n} \#P^1_R \to \#P^1_R
\]
being the projections on the first and second factors respectively. We define $\pi_{f,n}$ to be equal to $p_1$ and we define $g = p_2^*(T_0/T_1)$. It is easy to see that $g^n = f$. 

Now suppose that $(Y, \phi, g)$ is another triple as in property (2) in the statement of the theorem. Then, the universal property of the fiber product immediately gives required morphism $\~{\phi}$ from the morphisms $\phi: Y \to X$ and $g: Y \to \#P^1_R$. 
\end{proof}

We will simply write $X[f^{1/n}]$ for the triple $(X[f^{1/n}], \pi_{f,n}, f^{1/n})$, when the rest of the data is clear from the context.  Thus, for instance, suppose $p: Z \to X$ and $g: Z \to \#P^1_R$ are morphisms, we may say that $\phi: X[f^{1/n}] \to (Z, p, g)$ is an isomorphism to mean that $\phi$ is an isomorphism from the scheme $X[f^{1/n}]$ to the scheme $Z$ such that $\pi_{f,n}\circ \phi = p$ and $\phi^*(g) = f^{1/n}$.  The following lemma is obvious and so we omit its proof. 

\begin{lemma}
Let $X$ be a separated, integral $R$-scheme and let $f \in \@O^\times(X)$, viewed as a morphism from $X$ to $\#P^1_R$.  If $n \in R^{\times}$, then the morphism $X[f^{1/n}] \to X$ is \'etale. 
\end{lemma}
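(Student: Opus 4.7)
The plan is to exploit the hypothesis that $f \in \@O^{\times}(X)$ in order to replace $\#P^1_R$ by the open subscheme $\G_m = \#P^1_R \setminus \{0, \infty\} = \Spec R[t, t^{-1}]$, and then reduce the claim to the classical fact that the $n$-th power map on $\G_m$ is \'etale whenever $n$ is invertible.

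First I would observe that, since $f$ is a unit, the morphism $f: X \to \#P^1_R$ factors through the open subscheme $\G_m \subset \#P^1_R$. Moreover, the $n$-th power map $\phi_n: \#P^1_R \to \#P^1_R$ (defined by $T_0 \mapsto T_0^n$, $T_1 \mapsto T_1^n$) restricts to a morphism $\psi_n: \G_m \to \G_m$ given on rings by $R[t, t^{-1}] \to R[t, t^{-1}]$, $t \mapsto t^n$, and satisfies $\phi_n^{-1}(\G_m) = \G_m$ (if $t^n$ is a unit, so is $t$). Combining these observations with the construction in Proposition \ref{proposition attaching roots}, the fiber product $X[f^{1/n}] = X \times_{f, \#P^1_R, \phi_n} \#P^1_R$ is canonically isomorphic (as a scheme over $X$) to the fiber product $X \times_{f, \G_m, \psi_n} \G_m$.

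Next I would check directly that $\psi_n: \G_m \to \G_m$ is \'etale when $n \in R^{\times}$. Flatness is automatic since $R[t, t^{-1}]$ is free of rank $n$ over the subring $R[t^n, t^{-n}]$ with basis $1, t, \ldots, t^{n-1}$. For unramifiedness, one computes the relative differentials: the relation $d(t^n) = n t^{n-1} dt$ holds in $\Omega^1_{R[t,t^{-1}]/R}$, and since $n t^{n-1}$ is a unit in $R[t, t^{-1}]$ by hypothesis, it follows that $\Omega^1_{R[t,t^{-1}]/R[t^n,t^{-n}]} = 0$. Hence $\psi_n$ is \'etale.

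Finally, since \'etaleness is stable under arbitrary base change, the morphism $\pi_{f,n}: X[f^{1/n}] \to X$, being the pullback of the \'etale morphism $\psi_n$ along $f: X \to \G_m$, is itself \'etale. There is no serious obstacle in this argument; the only point requiring care is the identification of $X[f^{1/n}]$ with the pullback of the cover $\G_m \xrightarrow{\psi_n} \G_m$, which ultimately rests on the elementary observation that the $n$-th power map preserves the locus where the homogeneous coordinates are invertible.
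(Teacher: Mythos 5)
The paper declares this lemma ``obvious'' and omits the proof, so there is no argument in the text to compare against. Your proof is correct and is the natural way to fill in that gap: you observe that $f$ factors through $\mathbb{G}_m \subset \mathbb{P}^1_R$, that $\phi_n^{-1}(\mathbb{G}_m) = \mathbb{G}_m$ so the fiber product in Proposition~\ref{proposition attaching roots} can be computed over $\mathbb{G}_m$, verify directly that $t \mapsto t^n$ on $\mathbb{G}_m$ is \'etale when $n \in R^\times$ (free of rank $n$, hence flat, and $\Omega^1 = 0$ since $nt^{n-1}$ is a unit), and conclude by stability of \'etaleness under base change. All the steps check out.
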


\subsubsection{Turning a regular function into a unit}

Suppose that $f$ defines a morphism from $X$ to $\#P^1_R$. Then, $X\{f\}$ will denote a scheme with a given morphism $\eta_{f}: X\{f\} \to X$ such that the following conditions hold:
\begin{enumerate}[label=$(\alph*)$]
\item $\eta_f^*(f)$ is in $\@O_{X\{f\}}^\times(X\{f\})$.
\item Given any scheme $Y$ and a morphism $\phi: Y \to X$ such that $\phi^*(f)$ is in $\@O_Y^\times(Y)$, there exists a unique map $\~{\phi}: Y \to X\{f\}$ such that $\phi = \pi_f \circ \~{\phi}$. 
\end{enumerate}

Clearly, $X\{f\}$ is just the open subscheme of $X[f]$ which is the complement of the support of $div(f)$. 

\begin{remark}
We observe that due to the universal properties of the three constructions 4.1.1-4.1.3, they may be permuted. In other words, if $f$ is a rational function and $g,h$ are regular functions on $X$ and $n>1$ is an integer, then we have isomorphisms $X[f]\{g\} \simeq X\{g\}[f]$, $X[f][g^{1/n}] \simeq X[g^{1/n}][f]$ and $X\{g\}[h^{1/n}] \simeq X[h^{1/n}]\{g\}$. However, for the second and third isomorphisms to make sense, we should also keep track of the structural $n$-th roots.
\end{remark}

\subsection{Nodal blowups of ruled surfaces}
\label{subsection nodal blowups of ruled surfaces}

Let $C$ be a smooth, $1$-dimensional scheme over $k$. In this section, we will work with schemes that are obtained by successive blowups of $\#P^1_C$ at smooth, closed points.  Thus, all such schemes will come equipped with canonical birational maps between them.  We will use the following conventions involving rational functions, points and curves on such schemes for the sake of brevity. 

\begin{conventions}
\hspace{1cm}
\begin{enumerate}
\item \emph{Rational functions:} Given any scheme $X$ obtained from $\#P^1_C$ by successive blowups, its function field will be canonically isomorphic to that of $\#P^1_C$. We will use this canonical isomorphism to identify the two function fields. Thus, the same symbol will be used to denote a rational function on $\#P^1_C$ and its pullback to $X$. In particular, a rational function in $x$ and $y$ with coefficients in $k$ can be interpreted as a rational function on any smooth scheme that is birational to $\#P^1_C$. 

\item \emph{Points:} Suppose $X$ and $Y$ are both birational to $\#P^1_C$ and let $\phi: X \dashrightarrow Y$ be the unique birational map such that the diagram
\[
\xymatrix{
X \ar@{-->}[rr]^{\phi} \ar@{-->}[rd] & & Y \ar@{-->}[ld] \\
 & \#P^1_C & 
}
\]
commutes. Let $P$ be a point on $X$ such that $\phi$ is an isomorphism on an open neighbourhood of $P$. Then the image $\phi(P)$ in $Y$ will also be denoted by the symbol $P$.

\item \emph{Curves:} Suppose $X$, $Y$ and $\phi$ are as in (b). If $B$ is a curve on $X$, the map $\phi$ is defined on an open subset $B'$ of $B$. The closure of $\phi(B')$, which is called the \emph{proper transform of $B$ under $\phi$}, will also be denoted by the symbol $B$.  
\end{enumerate}
\end{conventions}

We fix the following setting for the rest of the article.

\begin{notation}
\label{notation nodal blowups}
Fix an algebraically closed field $k$ of characteristic $0$.  

\begin{enumerate}
\item Let $x$ be a variable and let $A$ be the Henselization of the polynomial ring $k[x]$ at the maximal ideal $\<x\>$. Let $C = \Spec A$ and let $c_0$ denote the closed point of $C$.  Every $1$-dimensional regular henselian local ring containing $k$ is isomorphic to $A$.  

\item Let $Y$ and $Z$ be variables which denote the homogeneous coordinates on $\#P^1_C$. Thus, $\#P^1_C = \Proj A[Y,Z]$. 

\item Let $y$ denote the rational function $Y/Z$ on $\#P^1_C$ which is defined on the open subscheme of $\#P^1_C$ defined by the condition $Z \neq 0$. This open subscheme is simply $\Spec A[y] \simeq \#A^1_C$ and the point $(c_0, [0:1])$ is defined by the ideal $\<x,y\>$ of $A[y]$.  

\item Let $\ell_{\infty}$ denote the closed subscheme $C \times \{[0:1]\}$ of $\#P^1_C$, which is the divisor of zeros of $y$. The closed subscheme $C \times \{[1:0]\}$ is the divisor of poles of $y$ and will be denoted by $\ell_{-\infty}$. 
\end{enumerate}
\end{notation}

\begin{definition}
\label{definition nodal blowups}
Let $X$ be any scheme that is obtained from $\#P^1_C$ by a finite number (possibly zero) of successive blowups at smooth, closed points.
\begin{enumerate}
\item The fibre of $X \to C$  over $c_0$ is a connected scheme, the irreducible components of which are isomorphic to $\#P^1_k$. We refer to these as \emph{lines} on $X$.

\item We define a \emph{pseudo-line} to be any curve that is either a line on $X$ or the proper transform of $\ell_{\infty}$.

\item A \emph{node} on $X$ is defined to be the intersection point of two pseudo-lines. It is easy to see that any node is the point of intersection of exactly two pseudo-lines.  Thus, for instance, the point $(c_0, [0:1])$ is the only node on the scheme $\#P^1_C$. 
\end{enumerate}

We will denote by $\@N$ the collection of schemes $X$ admitting a morphism $X \to \#P^1_C$, which factors as  
\[
X = X_r \xrightarrow{\pi_r} X_{r-1} \xrightarrow{\pi_{r-1}} \cdots \xrightarrow{\pi_1} X_0 = \#P^1_C
\]
with $r \geq 0$, where for all $i\geq 1$, $\pi_i: X_i \to X_{i-1}$ is the blowup of $X_{i-1}$ at some \emph{node} of $X_{i-1}$.  Note that the definition of $\@N$ does not merely depend on the scheme $C$. It depends on the choices of the parameter $x$ on $C$ as well as the homogeneous coordinates $Y$ and $C$ on $\#P^1_C$.
\end{definition}

Since all the nodes in all the $X \in \@N$ lie over the point $(c_0, [0:1])$, we see that these ideals defining these nodes are generated by rational functions in $x$ and $y$. We will now describe all such ideals. 

\begin{example}
To begin with, we examine the blowup of $\#P^1_C$ at the point $(c_0, [0:1])$. As we noted above, this point is locally defined by the ideal $\<x,y\>$. It is the intersection of the unique line in $\#P^1_C$ and the pseudo-line $\ell_{\infty}$. The line in $\#P^1_C$ has $y$ as a parameter while the pseudo-line $\ell_{\infty}$ has $x$ as a parameter. When we blow up the point, the exceptional divisor is a line having parameter $x/y$. It meets the proper transform of the line on $\#P^1_C$ in a point defined by the ideal $\<x/y,y\>$. It meets the proper transform of $\ell_{\infty}$ in a point defined by the ideal $\<x,y/x\>$. 
\end{example}

More generally, suppose that a node, which is the intersection of pseudo-lines $C_1$ and $C_2$ is defined by the ideal $\<\alpha, \beta\>$. We also assume that $\alpha$ is a parameter on $C_1$ and $\beta$ is a parameter on $C_2$. When this point is blown up, the exceptional divisor is a line having parameter $\alpha/\beta$. The proper transforms of $C_1$ and $C_2$, which we continue to denote by the symbols $C_1$ and $C_2$ respectively, have parameters $\alpha$ and $\beta$ respectively. The exceptional divisor meets $C_1$ in a point defined by the ideal $\<\alpha, \beta/\alpha\>$ and it meets $C_2$ in a point defined by the ideal $\<\alpha/\beta, \beta\>$. Since we start with the ideal $\<x,y\>$ it is easy to see that all the nodes will be locally defined by ideals of the form $\<x^a/y^b, y^d/x^c\>$ where $a,b,c,d$ are non-negative integers. Also, at such a node, the rational function $x^a/y^b$ is a parameter on one of the pseudo-lines through the node, while $y^d/x^c$ is a parameter on the other pseudo-line. 

Thus, we see that for any $X \in \@N$, any line on $X$ has a parameter of the form $x^a/y^b$ for non-negative integers $a$ and $b$. Observe that the integers $a$ and $b$ are uniquely determined by this line. Indeed, if $x^a/y^b$ and $x^c/y^d$ are parameters on the same line, they are related by a fractional linear transformation. In other words, there exist $\alpha, \beta, \gamma, \delta$ in $k$ such that $\alpha \delta - \beta \gamma \neq 0$ such that 
\[
x^a/y^b = \frac{\alpha (x^c/y^d) + \beta}{\gamma(x^c/y^d) + \delta}. 
\]
As $x$ and $y$ are algebraically independent, it is easy to prove that this can only happen if $(a,b) = (c,d)$. 

\begin{lemma}
\label{lemma blowups description}
Let $X \in \@N$. Then:
\begin{itemize}
\item[(a)] If a line $l$ of $X$ has a parameter of the form $x^a/y^b$, then $a$ and $b$ are coprime non-negative integers.  
\item[(b)] If $C_1$ and $C_2$ are pseudo-lines on $X$ with parameters $x^a/y^b$ and $x^c/y^d$ which intersect at a node defined by the ideal $\<x^a/y^b, y^d/x^c\>$, then $ad-bc = 1$ (and so, in particular, $a/b > c/d$). 
\end{itemize}
\end{lemma}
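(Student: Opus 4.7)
The plan is to prove parts (a) and (b) simultaneously by induction on the number $r \geq 0$ of successive blowups required to obtain $X$ from $\#P^1_C$, tracking the parameter of each pseudo-line together with the ideal of each node.

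For the base case $r = 0$, the scheme $X_0 = \#P^1_C$ has a unique line, namely the fibre over $c_0$, with parameter $1/y = x^0/y^1$ so $(a,b) = (0,1)$; and it has a unique node $(c_0,[0:1])$, cut out by $\<x, y\> = \<x^1/y^0, y^1/x^0\>$, which is the intersection of $\ell_\infty$ (parameter $x = x^1/y^0$) and the fibre line (parameter $1/y = x^0/y^1$). With these assignments $(a,b,c,d) = (1,0,0,1)$, and $ad - bc = 1$ as required.

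For the inductive step, suppose (a) and (b) hold on $X_{r-1}$, and let $X_r$ be the blowup of $X_{r-1}$ at a node $P$. By the inductive hypothesis, $P$ is the intersection of pseudo-lines $C_1$, $C_2$ of parameters $x^a/y^b$ and $x^c/y^d$ with $ad - bc = 1$, cut out by $\<u, v\>$ where $u = x^a/y^b$ and $v = y^d/x^c$, and locally $C_1 = \{v = 0\}$ and $C_2 = \{u = 0\}$. Since every node of a scheme in $\@N$ lies in the fibre over $c_0$, the exceptional divisor $E$ is a line. Working in the two standard affine charts of the blowup, with coordinates $(u, v/u)$ and $(v, u/v)$, one reads off that $E$ inherits the $\#P^1$-coordinate $u/v = x^{a+c}/y^{b+d}$. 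The identity $ad - bc = 1$ forces $\gcd(a+c, b+d) = 1$, since any common divisor must divide $(a+c)d - (b+d)c = ad - bc = 1$; combined with the evident non-negativity, this establishes (a) for $E$. The remaining lines of $X_r$ are inherited from $X_{r-1}$ without change, so (a) holds for them by induction.

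For (b), the nodes of $X_r$ disjoint from $P$ are unchanged and satisfy the claim by the inductive hypothesis. There are exactly two new nodes. In the first chart, the strict transform of $C_1$ is $\{v/u = 0\}$, so $E \cap C_1$ is cut out by $\<u, v/u\> = \<x^a/y^b, y^{b+d}/x^{a+c}\>$, matching the required form with parameters $x^a/y^b$ on $C_1$ and $x^{a+c}/y^{b+d}$ on $E$, and $a(b+d) - b(a+c) = ad - bc = 1$. Symmetrically, $E \cap C_2$ is cut out by $\<u/v, v\> = \<x^{a+c}/y^{b+d}, y^d/x^c\>$ with $(a+c)d - (b+d)c = ad - bc = 1$. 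The main obstacle in the proof is purely bookkeeping: at each new node one must ensure that the labelling of the two pseudo-lines is consistent with the orientation convention implicit in writing the defining ideal in the form $\<x^a/y^b, y^d/x^c\>$ (rather than $\<y^b/x^a, x^c/y^d\>$); once the conventions are fixed, both parts reduce to the two determinant computations above.
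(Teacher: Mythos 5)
Your proof is correct and takes essentially the same approach as the paper: induction on the number $r$ of successive nodal blowups, with the blowup-at-a-node chart computation producing the two new nodes and the determinant identities $a(b+d)-b(a+c)=(a+c)d-(b+d)c=ad-bc=1$ doing all the work. Your write-up is actually slightly more careful than the paper's (you spell out the base case and the chart coordinates explicitly, and avoid the paper's typo where it concludes ``$a+b$ and $c+d$ are coprime'' instead of ``$a+c$ and $b+d$ are coprime'').
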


\begin{proof}
We choose a sequence of blowups 
\[
X  = X_r \stackrel{\pi_{r}}{\to} X_{r-1} \stackrel{\pi_{r-1}}{\to} \cdots \stackrel{\pi_1}{\to} X_0 =\#P^1_C
\] 
where $\pi_i$ is the blowup of $X_{i-1}$ at a node.  

We prove these statements on all the $X_i$ by induction on $i$. They are clearly both true for $i = 0$. Suppose we know these statements to be true for all the nodes on $X_i$, where $i\geq 0$ and that $X_{i+1}$ is obtained from $X_i$ by blowing up a node given by the maximal ideal $\<x^a/y^b, y^d/x^c\>$. By the induction hypothesis, $a/b>c/d$ and $ad-bc = 1$. Then, on $X_{i+1}$, we have two new nodes given by the maximal ideals $\<x^{a+c}/y^{b+d}, y^d/x^c\>$ and $\<x^a/y^b, y^{b+d}/x^{a+c}\>$.  Since $ad-bc = 1$, we also get $(a+c)d - (b+d)c = ad-bc = 1$ and $a(b+d) - b(a+c) = ad-bc = 1$. Clearly, this implies that $a+b$ and $c+d$ are coprime. This completes the proof. 
\end{proof}

This shows that if two intersecting lines on $X$ have parameters $x^a/y^b$ and $x^c/y^d$, then the fractions $a/b$ and $c/d$ are in reduced form and that they are adjacent to each other on the Stern-Brocot tree (see \cite[Section 4.5]{GKP}). Indeed, we see that if $X$ in $\@N$ has $n$ lines, we can associate to it the sequence sequence of rational numbers $r_0 = -\infty < r_1 = 0 < \ldots < r_n = 1 < r_{n+1} = \infty$ such that:
\begin{itemize}
\item[(1)] For $1 \leq i \leq n$, if $r_i$ is written in reduced fractional form as $a_i/b_i$ for some non-negative integers $a_i$, $b_i$, then $X$ contains a line parameterized by $x^{a_i}/y^{b_i}$. We will label this line as $\ell_{r_i}$. For $i = 0$ and $i=n+1$, the symbol $\ell_{r_i}$ will denote the pseudo-lines $\ell_{-\infty}$ and $\ell_{\infty}$ respectively.   
\item[(2)] For $1 \leq i \leq n$, the line $\ell_{r_i}$ only meets the pseudo-lines $\ell_{r_{i-1}}$ and $\ell_{r_{i+1}}$. If $r_i$ and $r_{i+1}$ are written in reduced fractional form as $r_i = a_i/b_i$ and $r_{i+1} = a_{i+1}/b_{i+1}$ then they meet in a node defined by the maximal ideal $\<x^{a_{i+1}}/y^{b_{i+1}}, y^{b_i}/x^{a_i}\>$. (Here, if $r_{i+1} = \infty$, we choose $a_{i+1} = 1$ and $b_{i+1} = 0$.)
\end{itemize}
Note that this also shows that for pair of coprime integers $(a,b)$ with $a<b$, there is at most one line on $X$ with parameter $x^a/y^b$. Thus, labelling such a line as $\ell_{r}$ does not cause any conflicts. It is also easy to see if such lines exist on two schemes $X_1$, $X_2$ in $\@N$, then the canonical birational maps from $X_1$ to $X_2$ will take the lines into each other. Thus, our labelling respects the convention mentioned above, regarding using the same symbols for lines that are proper transforms of each other. 

\begin{lemma}
\label{lemma zeros and poles}
Let $0 < r \leq 1$ be a rational number, represented in reduced rational form as $a/b$. Let $X \in \@N$ be such that it contains a line labelled $\ell_r$. Then the rational function $x^a/y^b$ defines a morphism from $X$ to $\#P^1_R$. The support of the divisor of zeros of $x^a/y^b$ is given by 
\[
\Supp(div_0(x^a/y^b)) = \ell_{-\infty} \cup \left( \bigcup_{s<r} \ell_s \right)
\]
and the support of the divisor of poles of $x^a/y^b$ is given by
\[
\Supp(div_{\infty}(x^a/y^b)) = \ell_{\infty} \cup \left(\bigcup_{s>r} \ell_s \right).  
\]
(Here the unions are over all $s$ such that $X$ contains a line labelled $\ell_s$.) 
\end{lemma}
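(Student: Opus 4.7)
The plan is threefold: establish a formula for the orders of $x$ and $y$ along every pseudo-line of $X$; use this to read off the supports of the divisors of zeros and poles of $x^a/y^b$; and verify that $x^a/y^b$ has no indeterminacy on $X$, so that Lemma \ref{lemma extending rational functions} delivers the required morphism to $\#P^1_R$.

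For the first step, I would prove by induction on the number of blowups required to build $X$ from $\#P^1_C$ that for each pseudo-line $\ell_s \subset X$, writing $s = a_s/b_s$ in reduced form (with conventions $(a_\infty, b_\infty) = (1, 0)$ and $(a_{-\infty}, b_{-\infty}) = (-1, 0)$), one has $\mathrm{ord}_{\ell_s}(x) = b_s$ and $\mathrm{ord}_{\ell_s}(y) = a_s$. The base case $X = \#P^1_C$ is immediate from Notation \ref{notation nodal blowups}. For the inductive step, when a node $P = \ell_{c/d} \cap \ell_{a/b}$ (with $a/b > c/d$) is blown up, Lemma \ref{lemma blowups description} identifies the exceptional divisor as $E = \ell_{(a+c)/(b+d)}$; the local equations $x^a/y^b$ and $y^d/x^c$ of the two branches at $P$ form a regular system of parameters, and the identity $ad - bc = 1$ expresses $x$ and $y$ as pure monomials in these parameters, from which the order along $E$ is simply the sum of the orders along the two branches, yielding $\mathrm{ord}_E(x) = b+d$ and $\mathrm{ord}_E(y) = a+c$.

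The divisor computation is then straightforward: $\mathrm{ord}_{\ell_s}(x^a/y^b) = a b_s - b a_s$, which for finite $s$ equals $b\, b_s (r - s)$ and hence is positive exactly when $s < r$, zero when $s = r$, and negative when $s > r$; direct evaluation on $\ell_{\pm \infty}$ yields $\mp b$. This matches the asserted supports of the divisors of zeros and poles.

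Finally, to verify that $x^a/y^b$ extends to a morphism, by Lemma \ref{lemma extending rational functions} it suffices to check that at every point $P \in X$, one of $x^a/y^b$ or $y^b/x^a$ lies in $\@O_{X,P}$. At generic points of pseudo-lines this is automatic from the order formula. At a node $P = \ell_{r_i} \cap \ell_{r_{i+1}}$, the hypothesis that $\ell_r$ lies on $X$ forces $r$ to equal some $r_k$ in the Stern-Brocot sequence of $X$, so that the adjacent labels $r_i$ and $r_{i+1}$ cannot strictly straddle $r$; consequently the local orders of $x^a/y^b$ at $P$ have the same sign, and the regular $2$-dimensional local ring $\@O_{X,P}$ (with local equations of $\ell_{r_i}$ and $\ell_{r_{i+1}}$ as a regular system of parameters) accommodates either $x^a/y^b$ or $y^b/x^a$ as a regular element. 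This last step is the heart of the argument: if the hypothesis $\ell_r \subset X$ were dropped, a node with $r_i < r < r_{i+1}$ could occur, giving the two local orders opposite signs and producing genuine indeterminacy.
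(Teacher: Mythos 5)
Your proof is correct and fills in the inductive argument the paper omits (the paper asserts the lemma follows by an induction in the style of Lemma \ref{lemma blowups description} but gives no details). Proving $\mathrm{ord}_{\ell_s}(x) = b_s$ and $\mathrm{ord}_{\ell_s}(y) = a_s$ by induction on the blowup sequence, reading off the divisor supports from $ab_s - ba_s$, and invoking the hypothesis $\ell_r \subseteq X$ together with the Stern--Brocot adjacency of nodes to rule out indeterminacy via Lemma \ref{lemma extending rational functions} is precisely the natural way to carry out that induction.
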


\begin{proof}
We omit the proof of this lemma, but it can be easily proved using an inductive argument as in the proof of Lemma \ref{lemma blowups description}. 
\end{proof}

The following lemma, which says that any ruled surface can be obtained from a suitable blowup of a nodal blowup, will be used in Section \ref{section general case}. 

\begin{lemma}
\label{lemma single point support}
Let $\pi:X \to \#P^1_C$ be a composition of successive blowups at smooth closed points. Then, there exist morphisms
\[
X \stackrel{\phi}{\to} X' \stackrel{\psi}{\to} \#P^1_C
\]
such that the following conditions hold:
\begin{enumerate}[label=$(\alph*)$]
\item $\phi$ and $\psi$ are compositions of successive blowups at smooth closed points. 
\item $X'$ is an element of $\@N$.
\item $\phi:X \to X'$ is a blowup whose support does not contain any of the nodes of $X'$.
\end{enumerate}
\end{lemma}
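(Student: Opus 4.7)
My plan is to argue by induction on the number $r$ of blowups in the given factorization $X = Y_r \to Y_{r-1} \to \cdots \to Y_0 = \mathbb{P}^1_C$. The base case $r = 0$ is trivial: take $X' = \mathbb{P}^1_C$ and let $\phi$ and $\psi$ both be the identity. For the inductive step, I will apply the induction hypothesis to $Y_{r-1} \to \mathbb{P}^1_C$, obtaining a factorization $Y_{r-1} \xrightarrow{\phi_0} X'_0 \xrightarrow{\psi_0} \mathbb{P}^1_C$ with $X'_0 \in \mathcal{N}$, where $\phi_0$ is a composition of blowups at smooth closed points whose support contains no nodes of $X'_0$. Let $p \in Y_{r-1}$ denote the center of the final blowup $\pi_r: Y_r \to Y_{r-1}$, and set $\bar{p} := \phi_0(p) \in X'_0$.

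The argument then splits into two cases. If $\bar p$ is not a node of $X'_0$, I simply take $X' := X'_0$ and $\phi := \phi_0 \circ \pi_r$; this is evidently a composition of blowups at smooth closed points, and its support is $\{\bar p\} \cup \mathrm{supp}(\phi_0)$, which avoids all nodes of $X'_0$ by the inductive hypothesis together with the choice of $\bar p$. The genuinely interesting case is when $\bar p$ is a node of $X'_0$. The key observation is that since $\bar p$ is a node but $\mathrm{supp}(\phi_0)$ contains no nodes, the morphism $\phi_0$ is an isomorphism over some open neighbourhood of $\bar p$; in particular, $p$ is the unique preimage of $\bar p$ in $Y_{r-1}$. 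Setting $X' := \mathrm{Bl}_{\bar p}(X'_0)$, which lies in $\mathcal{N}$ by construction, a standard application of the universal property of blowups will yield a canonical morphism $\phi: Y_r \to X'$; in fact $Y_r \simeq Y_{r-1} \times_{X'_0} X'$. The morphism $\psi$ is then simply the composition $X' \to X'_0 \to \mathbb{P}^1_C$, which is manifestly a composition of blowups at smooth closed points.

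The main remaining work, which I expect to be the principal technical obstacle, is to verify in this nodal case that $\phi$ can itself be expressed as a composition of blowups at smooth closed points and that its support avoids every node of $X'$. The plan is to start from a factorization $Y_{r-1} = V_s \to \cdots \to V_0 = X'_0$ of $\phi_0$ into blowups at smooth closed points $q_1, \ldots, q_s$ and to pull each step back along $X' \to X'_0$ to form a sequence $Y_r = W_s \to \cdots \to W_0 = X'$. The crucial input is that none of the $q_i$ can coincide with the unique preimage of $\bar p$ in $V_{i-1}$, for otherwise $\bar p$ would lie in $\mathrm{supp}(\phi_0)$, contradicting the inductive hypothesis. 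This will guarantee that each step $W_i \to W_{i-1}$ is indeed the blowup of $W_{i-1}$ at a single smooth closed point, located away from the exceptional divisor $E$ of $X' \to X'_0$. It then follows that the support of $\phi$ lies entirely in $X' \setminus E$, and under the natural identification $X' \setminus E = X'_0 \setminus \{\bar p\}$ it coincides with $\mathrm{supp}(\phi_0)$, which contains no nodes of $X'_0$; the two new nodes of $X'$ lying on $E$ are automatically avoided as well. The geometric content is transparent, but carrying out the pullback-of-blowups construction and simultaneously checking that $\phi$ remains a composition of blowups at smooth closed points and that its support misses every node of $X'$ is where the argument will demand the most care.
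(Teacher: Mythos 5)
Your proof is correct and takes a genuinely different route from the paper's. The paper first \emph{modifies} the given morphism $\pi$ to reduce to the case where the ideal sheaf $\@J$ of the blowup $X \to \P^1_C$ is supported at the single point $(c_0,[0:1])$ (accomplished via repeated elementary transformations), and then argues by \emph{reordering} a fixed chain of blowups: it introduces a notion of ``pure node'' at each intermediate level and shows the chain can be rearranged so that all pure-node blowups come first; $X'$ is then the scheme at the cut-off index. You instead run an induction on the number of blowups, applying the inductive hypothesis to $Y_{r-1}$ to get $Y_{r-1} \to X'_0 \to \P^1_C$, and then splitting into cases according to whether the image $\bar p := \phi_0(p)$ of the last blowup center is a node of $X'_0$; in the interesting (nodal) case you set $X' := \mathrm{Bl}_{\bar p}(X'_0)$ and reconstruct $\phi$ by base-changing $\phi_0$ along $X' \to X'_0$. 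Your approach is arguably cleaner in that it avoids both the elementary-transformation reduction and the explicit reordering. The one technical point you correctly flag as needing care—showing that in the nodal case $\phi$ remains a chain of blowups at smooth closed points with support away from all nodes of $X'$—does go through: since $\Supp(\phi_0)$ misses $\bar p$, each $q_i$ avoids the unique preimage of $\bar p$ in $V_{i-1}$, so the maps $W_{i-1} \to V_{i-1}$ are local isomorphisms near the respective centers, and the blowups commute with this base change by a gluing argument (identity away from the exceptional locus, isomorphism near it). Your bookkeeping of the nodes of $X'$ (those of $X'_0$ minus $\bar p$, together with the two new ones on the exceptional curve, all avoided) is also accurate.
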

\begin{proof}
The morphism $\pi$ can be viewed as the blowup of a closed subscheme of $\#P^1_C$ corresponding to an ideal sheaf $\@J$ which is supported on a codimension $2$ subscheme of $\#P^1_C$. We will first reduce to the situation where the $\Supp(\@J) = \{(c_0,[0:1])\}$. 

If $\Supp(\@J)$ consists of a single point, by a suitable linear change of coordinates, we may assume that this point is actually $(c_0, [0:1])$. Let $N$ denote the number of components of $\pi^{-1}(\ell_0 \backslash \{(c_0, [0:1])\}$. If $N =1$, then $\Supp(\@J) = \{(c_0, [0:1])\}$ as desired. If $N>1$, then there exists at least one other point of $\ell_0$ which is in $\Supp(\@J)$. By a linear change of coordinates which leaves $(c_0, [0:1])$ fixed, we may ensure that the point $(c_0, [1:0])$ is in $\Supp(\@J)$. Let $\~X \to \#P^1_C$ denote the blowup at $(c_0, [1:0])$. Clearly, $\pi: X \to \#P^1_C$ factors uniquely through $\~X$. Let $\~{\pi}: \~X \to \#P^1_C$ be the morphism which contracts the line $\ell_0$ on $\~X$. Let $\pi': X \to \#P^1_C$ be the composition $X \to \~X \stackrel{\~{\pi}}{\to} \#P^1_C$. It is easy to see that the number of components of $(\pi')^{-1}(\ell_0 \backslash \{(c_0, [0:1])\})$ is strictly less than $1$. Repeating this process with $\pi'$ in place of $\pi$ if necessary, we eventually obtain a morphism $X \to \#P^1_C$ which is a blowup at a subscheme supported at the point $(c_0, [0:1])$. So, from this point onward, we assume that $\Supp(\@J) \cap \ell_0 = \{(c_0, [0:1])\}$. 

The morphism $X \to \#P^1_C$ can be written as a composition 
\begin{equation}
\label{equation blowup factorization}
X := X_m \to X_{m-1} \to \cdots \to X_1 \to X_0:= \#P^1_C
\end{equation}
where $\pi_{i,i-1}: X_i \to X_{i-1}$ is the blowup at a smooth closed point $Q_{i-1}$ of $X_{i-1}$. For any $i > j$, let $\pi_{i,j}$ denote the composition $\pi_{j+1,j} \circ \cdots \circ \pi_{i,i-1}$ and for $i = j$, let $\pi_{i,i}$ denote the identity morphism. 

For any integer $i$, $0 \leq i \leq m$, we call a point $Q \in X_i$ a \emph{pure node} if for any $1 \leq j \leq i$, the point $\pi_{i,j}(Q)$ is a node (see Notation \ref{notation nodal blowups}). We observe that $Q_0$ is a pure node.  We first claim that the above factorization of the morphism $X \to \#P^1_C$ can be chosen so that there exists an integer $0 < m' \leq m$ such that $Q_r \in X_r$ is a pure node if and only if $r < m'$. We will do this by modifying the above sequence of blowups repeatedly. 

If $Q_r$ is not a pure node for any $r \geq 1$, we may take $m' = 1$. So, we assume the contrary and let $p$ be the largest non-negative integer such that $Q_r$ is a pure node for every $r < p$, but $Q_p$ is not a pure node. If $Q_r$ is not a pure node for any $r>p$, we may simply take $m' = p$. If not, let $q$ be the smallest integer greater than $p$ such that $Q_q$ is a pure node. Thus, $\pi_{q,q-1}(Q_p) \neq Q_{q-1}$. Let $X'_q$ be the blowup of $X_{q-1}$ at $Q'_{q-1}:= \pi_{q,q-1}(Q_q)$. Let $Q'_{q} \in X'_q$ be the unique point of $X'_q$ which lies over $Q_{q-1}$. If $X'_{q+1}$ is the blowup of $X'_q$ at $Q'_{q}$, it is clear that we have a canonical isomorphism $X_{q+1} \to X'_{q+1}$ such that the diagram
\[
\xymatrix{
X'_{q+1} \ar[r]^{\cong}\ar[d] & X_{q+1} \ar[d] \\
X'_q \ar[r] \ar[dr] & X_{q} \ar[d] \\
& X_{q-1}
}
\]
commutes. Thus, we may now relabel $Q'_{q-1}$ as $Q_{q-1}$, $Q'_q$ by $Q_q$ and $X'_q$ by $X_{q}$ to get a different factorization of the morphism $X \to \#P^1_C$ as a sequence of blowups. However, now the smallest integer $r>p$ such that $Q_r$ is a pure node is $q-1$. If $q-1 > p$, we may repeat this process. Continuing in this manner, we modify the sequence of blowups until we come to a situation where $Q_r$ is a pure node for all $r < p+1$. 
We perform this entire procedure repeatedly until we obtain a sequence with the required property.  

We then set $X' := X_{m'}$ 
and obtain the desired factorization 
\[
X \stackrel{\phi}{\to} X' \stackrel{\psi}{\to} \#P^1_C
\]
of $\pi$, clearly satisfying the conditions (a), (b) and (c). 
\end{proof}

\subsection{Local geometry of ruled surfaces}
\label{subsection etale cover}

Our proof of the main theorem (Theorem \ref{theorem ruled iterations of S}(b)) will be achieved by induction on the number of blow-ups required to obtain a given non-minimal ruled surface.  In this subsection, we prove a result regarding the local geometry of nodal blowups of $\P^1_C$, which will play a crucial role in the induction argument.  We keep the notation and conventions from Section \ref{subsection nodal blowups of ruled surfaces}.

\begin{definition}

\label{definition U_r}
Let $X \in \@N$ (see Definition \ref{definition nodal blowups}) and let $X_{c_0}$ denote the fiber of $X \to C$ over $c_0$. 
\begin{enumerate}
\item Let $r$ be a non-negative rational number, written in reduced form as $r = a/b$, $a,b \in \#Z$.  Choose a scheme $X \in \@N$ which has a line labelled $\ell_r$. We define the open subset $U_r$ of $X$ by 
\[
U_r := X \backslash (\Supp(div(x^a/y^b)) \cap X_{c_0}).
\]
If $r$ happens to be a positive integer, choose a scheme $X \in \@N$ which has a line labelled $\ell_r$ and no lines labelled $\ell_s$ for any $s>r$. We define the open subset $\~U_r$ of $X$ by 
\[
\~U_r := X \backslash (\Supp(div_0(x^1/y^r) \cap X_{c_0})
\]
We also define 
\[
\~U_0:= \#P^1_C \backslash \{(c_0, [0:1])\}.
\]
\item Let $r$ and $s$ be two non-negative rational numbers such if they are written in reduced form as $r = a/b$ and $s = c/d$, then $r-s = 1/(bd)$. Then, one can show that there exists an element $X$ of $\@N$ on which there exist lines $\ell_r$ and $\ell_s$, which meet in a point. We define the open subset $U_{r,s}$ of $X$ by
\[
U_{r,s} := X \backslash \left[\left(\Supp(div_{\infty}(x^a/y^b)) \cup \Supp(div_0(x^c/y^d)) \right) \cap X_{c_0} \right].
\]
If $r$ happens to be a positive integer, one can show that there exists an element $X$ of $\@N$ on which there exist lines $\ell_r$ and $\ell_s$ which meet in a point, and such that there does not exist any line labelled $\ell_t$ for $t>r$.  We define the open subset $\~U_{r,s}$ of $X$ by 
\[
\~U_{r,s} := X \backslash (\Supp(div_0(x^c/y^d))  \cap X_{c_0}).
\]
If $r = 0$ and $s$ is of the form $1/d$ for some integer $d$, we define 
\[
\~U_{0,s} := X \backslash (\Supp(y^1/x^d) \cap X_{c_0} ) \text{.}
\]
\end{enumerate}
It can be easily verified that the isomorphism classes of $U_r$, $\~U_r$, $U_{r,s}$, $\~U_{r,s}$ are independent of the choice of $X$. 
\end{definition}

\begin{remark} The significance of the open subschemes of the form $U_r$ and $U_{r,s}$ may be understood by recalling the observations in Remark \ref{remark homotopies on ruled surfaces}.
\end{remark} 

\begin{lemma}
\label{lemma elementary transformation}
Let $r,s \geq 1$ be rational numbers.
\begin{enumerate}[label=$(\alph*)$]
\item The schemes $U_{r}$ and $U_{r-1}$ are isomorphic over $C$. If $r$ is an integer, then $\~U_r$ and $\~U_{r-1}$ are isomorphic over $C$. 
\item Let $r$ and $s$ be such that if they are written in reduced form as $r = a/b$ and $s = c/d$, then $r-s = 1/(bd)$. Then the schemes $U_{r,s}$ and $U_{r-1,s-1}$ are isomorphic over $C$. If $r$ is an integer, then $\~U_{r,s}$ and $\~U_{r-1,s-1}$ are isomorphic over $C$. 
\end{enumerate}
\end{lemma}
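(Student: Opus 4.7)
My plan is to construct the required isomorphisms using the classical \emph{elementary transformation} of the trivial $\#P^1$-bundle $\#P^1_C \to C$. Specifically, I will use the birational automorphism
\[
\tau \colon \#P^1_C \dashrightarrow \#P^1_C, \qquad (x, [Y:Z]) \mapsto (x, [Y:xZ]),
\]
which at the level of function fields corresponds to the substitution $y' = y/x$. The unique indeterminacy of $\tau$ is at $(c_0, [0:1])$; blowing up this point resolves $\tau$, yielding a morphism that contracts the proper transform of $X_{c_0}$ to $(c_0, [1:0])$. This is the familiar picture of an elementary transformation of a ruled surface.

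The key algebraic observation is the identity $x^a/y^b = x^{a-b}/(y')^b$, from which it follows that $\tau$ sends the parameter of the pseudo-line $\ell_{(a-b)/b}$ on the target to the parameter of $\ell_{a/b}$ on the source. Hence $\tau$ shifts labels $\ell_r$ by $r \mapsto r-1$. For part (a), given $X \in \@N$ containing $\ell_r$, I will construct $X' \in \@N$ containing $\ell_{r-1}$ by applying $\tau$ to $X$: this amounts to blowing up an appropriate node involving $\ell_\infty$ in $X$ and contracting a resulting $(-1)$-curve, and yields a birational map $X \dashrightarrow X'$ which is a morphism away from the contracted locus.

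To verify that this restricts to an isomorphism $U_r \xrightarrow{\sim} U_{r-1}$, I will combine Lemma~\ref{lemma zeros and poles} with the algebraic identity above to check that $\Supp(div(x^a/y^b))$ on $X$ and $\Supp(div(x^{a-b}/(y')^b))$ on $X'$ correspond bijectively under the birational map, and similarly for their intersections with the respective closed fibers. I will then verify that the blown-up and contracted loci lie inside these removed divisors, so the map restricts to an isomorphism of the open subsets. The statement $\~U_r \cong \~U_{r-1}$ for integer $r$ follows from the same argument using instead the identity $x/y^r = x^{1-r}/(y')^r$. For part (b), the same $\tau$ shifts both $r$ and $s$ by $-1$ simultaneously while preserving the relation $r - s = 1/(bd)$; an analogous divisor computation yields the isomorphisms $U_{r,s} \cong U_{r-1,s-1}$ and $\~U_{r,s} \cong \~U_{r-1,s-1}$.

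The main obstacle will be the combinatorial bookkeeping across the four variants. One must carefully track how $\tau$ acts on the chain of pseudo-lines in the closed fiber, particularly the intersection points of the horizontal sections $\ell_\infty$ and $\ell_{-\infty}$ with the closed fiber, which interact non-trivially with the contracted and blown-up loci of the elementary transformation. Edge cases where $r$ or $s$ is at the boundary of the range of pseudo-lines present in $X$ (including the $r = 0$ sub-case of part (b) involving $\~U_{0,s}$) will require separate, though routine, verification.
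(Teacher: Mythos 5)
Your proposal takes essentially the same approach as the paper: both invoke the elementary transformation of $\#P^1_C$ at the node $(c_0,[0:1])$ (blow up the node, contract the proper transform of the fibre $\ell_0$ over $c_0$, which is now a $(-1)$-curve) and observe that it shifts line labels by one unit, so that $U_r$ on the source nodal blowup is carried isomorphically to $U_{r-1}$ on the target, with the remaining variants handled by the same device. Your explicit coordinate change $\tau^*(y) = y/x$, together with the identity $x^a/y^b = x^{a-b}/(y')^b$, is the relevant computation and is the correct one; note that the paper's displayed formulas $\theta^*(y) = x/y$ and $\theta^*(x^a/y^b) = x^{a-b}/y^b$ are not mutually consistent (the first gives $x^{a-b}y^b$, not $x^{a-b}/y^b$), so your version $y \mapsto y/x$ appears to be what is actually intended, and it does yield the correct direction of the label shift.
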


\begin{proof}
We will sketch the argument for the first part of (a); the proofs of the remaining statements are essentially the same and are left to the reader. Let $W = X_1 \backslash \ell_0$. As $r>1$, the image of $U_r \to X_1$ factors through $W$. However, observe that the line $\ell_0$ in $X_1$ has self-intersection $-1$ and can thus be contracted due to Castelnuovo's contractibility criterion. Indeed, there exists a morphism $X_1 \to \#P^1_U$ for which $\ell_0$ is the exceptional divisor. (This process of blowing up a point on a ruled surface and contracting the strict transform of the fiber through the point is called an \emph{elementary transformation}; see \cite[Chapter V, Example 5.7.1]{Hartshorne}.) Thus, we see that there is an isomorphism $\theta: W \to \#P^1_U \backslash \{(c_0, [0:1])\}$.  It is easy to see that 
\[
U_{r-1} \cong U_r \times_{W, \theta} (\#P^1_U \backslash \{(c_0, [0:1])\}).
\]
Indeed, an explicit computation shows that $\theta^*(y) = x/y$. Thus, one can see that for any positive integers $a$ and $b$, $\theta^*(x^a/y^b) = x^{a-b}/y^b$.  
\end{proof}

Lemma \ref{lemma elementary transformation} tells us that the local geometry of $U_r$, where $r$ is a non-negative integer is the same as that of $U_0$.  However, an exact analogue is not true for $U_r$, where $r$ is not an integer.   The next result shows that the pullback of $U_r \to C$ by an appropriate finite morphism (induced by the taking the power of $x$ by the denominator of $r$ in the reduced form) has the same local geometry as that of $U_0$,  after performing some universal constructions described in Section \ref{subsection constructions}.

\begin{theorem}
\label{theorem etale cover} 
Let $r$ be a non-negative rational number, written in reduced rational form as $a/b$. Let $\phi_b: C \to C$ be the morphism corresponding to the $k$-homomorphism $A \to A$ defined by $x \mapsto x^b$. Let $p_1: U_r \times_{C, \phi_b} C \to U_r$ and $p_2: U_r \times_{C, \phi_b} C \to C$ be the two projection morphisms. We view $U_r \times_{C, \phi_b} C$ as a $C$-scheme via $p_2$ and denote $p_2^*(x)$ by $x$. Then, there exists an isomorphism of $C$-schemes 
\[
U_{a} \stackrel{\sim}{\to} (U_{r} \times_{C,\phi_b} C)[x^a/y], 
\]
where $(U_{r} \times_{C,\phi_b} C)[x^a/y]$ is considered a $C$-scheme via the morphism
\[
(U_{r} \times_{C,\phi_b} C)[x^a/y] \to U_{r} \times_{C,\phi_b} C \stackrel{p_2}{\to} U \text{.}
\]
Moreover, the composition morphism 
\[
(U_r \times_{C, \phi_b} C)[x^a/y]  \to (U_r \times_{C, \phi_b} C) \to U_r
\]
is a $b$-sheeted finite \'etale cover. 
\end{theorem}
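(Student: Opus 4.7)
The plan is to produce an explicit isomorphism $\Theta: U_a \xrightarrow{\sim} (U_r \times_{C, \phi_b} C)[x^a/y]$ and deduce the étale cover assertion from the construction. The key step is to lift the rational map $\P^1_C \dashrightarrow \P^1_C$ induced by the ring homomorphism $x \mapsto x^b,\, y \mapsto y$ to a morphism $\psi_{a, r}: U_a \to U_r$ over $\phi_b : C \to C$. I would choose $X_a, X_r \in \@N$ containing $\ell_a$ and $\ell_r$ respectively, and proceed chart-by-chart on the nodal affine opens of $X_a$, each of which has monomial coordinates of the form $(x^p/y^q, x^{p'}/y^{q'})$. The assignment $x \mapsto x^b,\, y \mapsto y$ produces the pair $(x^{bp}/y^q, x^{bp'}/y^{q'})$, which by Lemma \ref{lemma blowups description} and the Stern--Brocot description of nodes corresponds (possibly after passing to an adjacent chart) to a valid nodal chart of $X_r$. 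That $\psi_{a,r}$ sends $U_a$ into $U_r$ follows from the divisor identity $\psi_{a,r}^* \mathrm{div}(x^a/y^b) = \mathrm{div}((x^a/y)^b) = b \cdot \mathrm{div}(x^a/y)$, which shows $\psi_{a,r}^{-1}(\Supp \mathrm{div}(x^a/y^b) \cap X_{r, c_0}) = \Supp \mathrm{div}(x^a/y) \cap X_{a, c_0}$, and this is precisely the locus removed to form $U_a$.

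The pair $(\psi_{a, r}, U_a \to C)$ assembles into a morphism $\tilde\psi: U_a \to U_r \times_{C, \phi_b} C$ by the universal property of the fibre product. Since $x^a/y: U_a \to \P^1_k$ is a morphism (by the obvious extension of Lemma \ref{lemma zeros and poles} to arbitrary reduced fractions $r$) satisfying $(x^a/y)^b = \tilde\psi^*(p_1^*(x^a/y^b))$, Proposition \ref{proposition resolving indeterminacies} yields a unique morphism $\Theta: U_a \to (U_r \times_{C, \phi_b} C)[x^a/y]$ whose composition with the projection to $U_r$ is $\psi_{a, r}$. To see that $\Theta$ is an isomorphism I would verify directly that $\psi_{a,r}$ is finite étale of degree $b$: away from the closed fibre $\ell_a$, it is a base change of $\phi_b$, which is étale outside $c_0 \in C$; along $\ell_a \cap U_a$, picking an adjacent reduced fraction $c/d$ with $ad - bc = 1$, the function $y^d/x^c$ is a uniformizer along $\ell_r$, and its pull-back $\psi_{a,r}^*(y^d/x^c) = y^d/x^{bc}$ has valuation $da - bc = 1$ along $\ell_a$, so $\psi_{a, r}$ is unramified there. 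The induced map on the closed fibres is $s \mapsto s^b$ in the coordinate $s = x^a/y$, whose ramification points $s = 0, \infty$ are precisely the boundary points of $\ell_a$ removed in $U_a$. Since both $(U_r \times_{C, \phi_b} C)[x^a/y] \to U_r$ and $\psi_{a, r}$ are finite étale of degree $b$ and related by $\Theta$, the morphism $\Theta$ must be an isomorphism, and the $b$-sheeted étale cover assertion follows immediately.

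The main obstacle is the chart-by-chart construction of $\psi_{a, r}$ in the first step, where one must track how raising $x$ to the $b$-th power transports nodal charts of $X_a$ to nodal charts of $X_r$ via the adjacency relations of Lemma \ref{lemma blowups description}. Reducing to the case $r \in [0, 1)$ via Lemma \ref{lemma elementary transformation} and then inducting on $b$ using the Stern--Brocot recursion should make the bookkeeping manageable.
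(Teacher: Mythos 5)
Your approach is genuinely different from the paper's: where the paper covers $U_r$ by two explicit Zariski opens $W_1, W_2$, computes each $(W_i \times_{C,\phi_b} C)[x^a/y]$ in closed form using the formal properties of the constructions $[\cdot]$, $\{\cdot\}$, $[\cdot^{1/n}]$, and glues via universal properties, you try to build a geometric morphism $\psi_{a,r}: U_a \to U_r$ over $\phi_b$ directly and pass through its universal-property factorization. The geometric viewpoint is appealing, but as written there are two real gaps.

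The most serious one is in the final step. Having produced $\Theta: U_a \to Z := (U_r \times_{C,\phi_b} C)[x^a/y]$ by the universal property, you conclude $\Theta$ is an isomorphism ``since both $Z \to U_r$ and $\psi_{a,r}$ are finite \'etale of degree $b$.'' But the assertion that $Z \to U_r$ is finite \'etale of degree $b$ is precisely the ``Moreover'' part of the theorem; you have not established it independently, so the argument is circular. To make it work without circularity one would need to show \emph{a priori} that $Z \to U_r$ is finite (not just proper and generically finite --- the blowup could in principle contract something) and that $Z$ is normal, so that the finite birational map $\Theta$ from the normal scheme $U_a$ is an isomorphism. Neither is clear without the kind of explicit local computation that the paper carries out. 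This is exactly why the paper opts to construct the isomorphism directly on charts and glue.

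The second gap concerns the construction of $\psi_{a,r}$ itself. The claim that $x \mapsto x^b$ carries a nodal chart of $X_a$ (with coordinates $(x^p/y^q, x^{p'}/y^{q'})$) into a nodal chart of $X_r$ is false: already for $a=1$, $b=2$, $r=1/2$, the node $\langle x/y, y\rangle$ of $X_1$ pulls back to $\langle x^2/y, y\rangle$, which is not a node of $X_{1/2}$. More substantively, the divisor identity you invoke, $\psi_{a,r}^* \mathrm{div}(x^a/y^b) = b\cdot \mathrm{div}(x^a/y)$, only shows that \emph{if} the rational map extends to a morphism on $U_a$, then its image lies in $U_r$; it does not show that the rational map is defined at every point of $U_a$. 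Indeed, the rational map $X_a \dashrightarrow X_r$ does \emph{not} extend across $X_a$ in general (in the above example it has indeterminacy at the node $\ell_1 \cap \ell_\infty$, because $\ell_1 \mapsto \ell_{1/2}$ and $\ell_\infty \mapsto \ell_\infty$, yet these two pseudo-lines no longer meet in $X_{1/2}$); one must check separately that all points of indeterminacy have been removed in passing to $U_a$. Finally, your verification that $\psi_{a,r}$ is \'etale is only at the generic point of $\ell_a$ and away from the closed fibre; upgrading this to \'etale requires knowing $\psi_{a,r}$ is finite flat (so that Zariski--Nagata purity applies), and finiteness is not addressed.
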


\begin{proof}
The idea of the proof is to consider a particular Zariski open cover of $U_r$ and then demonstrate the isomorphism given in the statement of the theorem over every piece of the cover using universal properties from Section \ref{subsection constructions}.  It will be convenient to embed $U_r$ into an appropriate nodal blowup $X$.  We begin by choosing a specific $X \in \@N$ containing the line $\ell_r$ for an explicit description of $U_r$.  

We first set $s_0 = 0$, $s_1 = 1$ and $s_2 = 1/2$. For $i > 2$ we choose $s_i$ as follows, with $a_i/b_i$ denoting the reduced rational representation of $s_i$ for each $i$:
\begin{itemize}
\item If $r$ lies between $s_{i-2}$ and $s_{i-1}$, we define $s_{i} := \frac{a_{i-2} + a_{i-1}}{b_{i-2} + b_{i-1}}$. 
\item If $r$ does not lie between $s_{i-1}$ and $s_i$, then we define $s_{i} := \frac{a_{i-3} + a_{i-1}}{b_{i-3} + b_{i-1}}$.  
\end{itemize}
It can be proved that this sequence $\{s_i\}_i$ is finite and terminates in $r$ (see \cite[Section 4.5]{GKP}). Suppose $s_n = a/b = r$. Also, if the numbers $s_i$ are arranged in increasing order and if $s_i$ and $s_j$ are adjacent to each other in this arrangement with $s_i < s_j$, then we have $a_j b_i - a_i b_j = 1$. 

Thus, with the notation established in Section \ref{subsection constructions}, we have $$X = \#P^1_C[x^{a_1}/y^{b_1}, \ldots, x^{a}/y^{b}].$$  Using Lemma \ref{lemma zeros and poles}, it is easy to see that 
\[
 \#P^1_C[x^{a_1}/y^{b_1}, \ldots, x^{a}/y^{b}] \{x^a/y^b\} = U_{r} \backslash (\ell_{\infty} \cup \ell_{-\infty}). 
\]
We denote this open subscheme of $X$ by $W_1$. Let $W_2$ denote the open subscheme $X \times_{C} (C \backslash \{c_0\}) = X\{x\}$. It is clear that $\{W_1, W_2\}$ is a Zariski open cover of $U_r$. 

Note that on $W_1 \times_{U, \phi_b} U$, we have $p_1^*(x) = p_2^*(\phi_b^*(x))  = p_2^*(x)^b$. Since we are writing $p_2^*(x)$ as just $x$, we may write $p_1^*(x) = x^b$. Thus, we compute
\begin{align*}
(W_1 \times_{C, \phi_b} C) & \simeq \#P^1_C[x^{a_1}/y^{b_1}, \ldots, x^{a}/y^{b}] \{x^a/y^b\}) \times_{C, \phi_b} C \\
& \simeq \#P^1_C[p_1^*(x)^{a_1}/y^{b_1}, \ldots, p_1^*(x)^a/y^b] \{p_1^*(x)^a/y^b\}\\
& \simeq \#P^1_C[x^{a_1b}/y^{b_1}, \ldots,x^{ab}/y^b] \{x^{ab}/y^b\}.
\end{align*}
Hence,
\begin{align*}
(W_1 \times_{C, \phi_b} C)[x^a/y] & \simeq  \#P^1_C[x^{a_1b}/y^{b_1}, \ldots, (x^{a}/y)^b]\{(x^{a}/y)^b\}[x^a/y] \\
& \simeq \#P^1_C[(x^a/y)]\{(x^a/y)^b\}[x^{a_1b}/y^{b_1}, \ldots, x^{a_{n-1}b}/y^{b_{n-1}}]\\
& \simeq \#P^1_C[(x^a/y)]\{(x^a/y)\}[x^{a_1b}/y^{b_1}, \ldots, x^{a_{n-1}b}/y^{b_{n-1}}].
\end{align*}
However, for $1 \leq i \leq n-1$, we have
\[
x^{a_ib}/y^{b_i} = x^{(a_ib-ab_i)}(x^a/y)^{b_i}. 
\]
As the rational functions $x^a/y$ and $x$ extend to morphisms from $\#P^1_C[(x^a/y)]\{x^a/y\}$ to $\#P^1_C$ and since $x^a/y$ is a unit on $\#P^1_C[(x^a/y)]\{x^a/y\}$, we obtain isomorphisms
\begin{equation}
\label{eqnW1}
W_1 \times_{C, \phi_b} C [x^a/y] \xrightarrow{\sim} \#P^1_C[(x^a/y)]\{x^a/y\} \xrightarrow{\sim} U_a \backslash \left(\ell_{-\infty} \cup \ell_{\infty}\right).
\end{equation}

We have a similar computation for $(W_2 \times_{C, \phi_b} C)[x^a/y]$. 
\begin{align*}
(W_2 \times_{C, \phi_b} C)[x^a/y] & \simeq \#P^1_C[x^{a_1}/y^{b_1}, \ldots, x^{a}/y^{b}] \{x\}) \times_{U, \phi_b} U \\
& \simeq \#P^1_C[p_1^*(x)^{a_1}/y^{b_1}, \ldots, p_1^*(x)^a/y^b] \{p_1^*(x)\}\\
& \simeq \#P^1_C[x^{a_1b}/y^{b_1}, \ldots,x^{ab}/y^b] \{x^{b}\} \\
& \simeq \#P^1_C\{x^b\}[x^{a_1b}/y^{b_1}, \ldots,x^{ab}/y^b] \\
& \simeq \#P^1_C\{x\}[1/y^{b_1}, \ldots, 1/y^b] \\
& \simeq \#P^1_C \{x\}. 
\end{align*}
A similar computation shows that $U_a\{x\} \simeq \#P^1_{C \backslash \{c_0\}}$. Thus, we have
\begin{equation}
\label{eqnW2}
(W_2 \times_{C, \phi_b} C)[x^a/y] \simeq U_a \{x\}. 
\end{equation}
The open subschemes $U_a \backslash \left(\ell_{-\infty} \cup \ell_{\infty}\right)$ and $U_a \{x\}$ form a Zariski open cover of $U_a$. Thus, we have proved the required isomorphism over the pieces of a Zariski open cover. It remains to be shown that these isomorphisms agree over the intersection. The intersection of these two open subschemes of $U_a$ is 
\[
U_a \{x\} \backslash \left(\ell_{-\infty} \cup \ell_{\infty}\right) = U_a\{x\}\{y\}. 
\]
Let $\phi_1$ denote the composition
\[
U_a \{x\}\{y\} \hookrightarrow U_a\{x\} \xrightarrow{\sim} (W_2 \times_{C, \phi_b} C)[x^a/y] \hookrightarrow (U_r \times_{C, \phi_b} C)[x^a/y]
\] 
and let $\phi_2$ denote the composition
\[
U_a \{x\}\{y\} \hookrightarrow U_a\{x^a/y\} \xrightarrow{\sim} (W_1 \times_{C, \phi_b} U_r)[x^a/y] \hookrightarrow (U_r \times_{C, \phi_b} C)[x^a/y].
\]
These are open immersions with the same image.  For both $i = 1$ and $2$, the morphisms 
\[
U_a \{x\}\{y\} \stackrel{\phi_i}{\to} U_r \times_{U, \phi_b} U \to U_r
\]
express $U_a \{x\}\{y\}$ as the universal solution of the problem of
\begin{itemize}
\item attaching a $b$-th root of $x$ to $U_r$, the distinguished root being $x$ (that is, the pullback of $x$ via the obvious morphism $U_a \to U$),
\item turning $x$ into a unit, and
\item turning $y$ into a unit.
\end{itemize}
It follows from the universality of the solution that the isomorphisms \eqref{eqnW1} and \eqref{eqnW2} glue to give an isomorphism $U_{a} \stackrel{\sim}{\to} (U_{r} \times_{U,\phi_b} U)[x^a/y])$. 

It remains to prove that the projection $U_r \times_{C, \phi_b} C \to U_r$ is \'etale. It is clearly \'etale away from the zero divisor of $x$ and thus it is enough to prove that the map $(W_1 \times_{C, \phi_b} C)[x^a/y] \to W_1$ is \'etale. 

First observe that $W_1 \times_{C, \phi_b} C \simeq W_1[x^{1/b}]$ (however, note that under this isomorphism, the rational function $x^{1/b}$ on the scheme on the right corresponds to the rational function $x$ on the scheme on the left!).  Hence, we have an isomorphism $(W_1 \times_{C, \phi_b} C)[x^a/y] \simeq W_1[x^{1/b}, (x^{1/b})^a/y]$.  So, we will now show that the morphism
$W_1[x^{1/b}, (x^{1/b})^a/y] \to W_1$ is \'etale. 

Our construction of $X$ at the beginning of the proof shows that there exists a pair of integers $(c,d)$ such that $ad-bc = 1$ and such that $x^c/y^d$ is a morphism on $X$, and hence on $W_1$. Thus,
\[
(y^d/x^c)^b = (y^{bd}/x^{ad}) \cdot x. 
\]
We will now show that
\[
W_1[x^{1/b}, (x^{1/b})^a/y] \simeq W_1[(y^{bd}/x^{ad})^{1/b}]. 
\] 
As $(y^{bd}/x^{ad}) = (y^b/x^a)^d$ is a unit on $W_1$, the map $W_1[((y^b/x^a)^d)^{1/b}] \to W_1$ is \'etale and this will complete the proof of the theorem. 

On the scheme $W_1[x^{1/b}, (x^{1/b})^a/y]$, the rational function $y/(x^{1/b})^a$ induces a morphism from $W_1[x^{1/b}, (x^{1/b})^a/y]$ to $\#P^1_C$ and  
\[
((y/(x^{1/b})^a)^d)^b = (y^{bd}/x^{ad}).
\]
Thus, there is a unique morphism $\phi: W_1[x^{1/b}, (x^{1/b})^a/y] \to W_1[((y^{bd}/x^{ad}))^{1/b}]$ such that $\phi^*((y^{bd}/x^{ad})^{1/b}) = y^{d}/(x^{1/b})^{ad}$.  On the scheme $W_1[(y^{bd}/x^{ad})^{1/b}]$, we have a rational function denoted by $(y^{bd}/x^{ad})^{1/b}$ which induces a morphism from $W_1[(y^{bd}/x^{ad})^{1/b}]$ to $\#P^1_C$, and which has the property that $$((y^{bd}/x^{ad})^{1/b})^b = y^{bd}/x^{ad} = (y^b/x^a)^d.$$ As the rational function $y^b/x^a$ is a unit, so is $(y^{bd}/x^{ad})^{1/b}$. We will denote its multiplicative inverse by $(x^{ad}/y^{bd})^{1/b}$. Observe that
\[
\left[(x^{ad}/y^{bd})^{1/b} \cdot (y^d/x^c)\right]^b = (x^{ad}/y^{bd}) \cdot (y^{bd}/x^{bc}) = x. 
\]
Thus, $(x^{ad}/y^{bd})^{1/b} \cdot (y^d/x^c)$ is a $b$-th root of $x$. Thus, we obtain a $W_1$-morphism 
\[
\~{\psi}: W_1[(y^{bd}/x^{ad})^{1/b}] \to W_1[x^{1/b}]
\]
such that $\~{\psi}^*(x^{1/b}) = (x^{ad}/y^{bd})^{1/b} \cdot (y^d/x^c)$. Observe that
\[
\left[(x^{ad}/y^{bd})^{1/b} \cdot (y^d/x^c) \right]^a \cdot (1/y) = \left[(x^{ad}/y^{bd})^{1/b}\right]^a \cdot (y^b/x^a)^c.
\]
Since $(x^{ad}/y^{bd})^{1/b}$ defines a morphism from $W_1[(y^{bd}/x^{ad})^{1/b}]$ to $\#P^1_C$ and since $(y^b/x^a)$ is a unit, it follows that the rational function $(\~{\psi}^*(x^{1/b}))^a/y$ defines a $W_1[x^{1/b}]$-morphism from $W_1[(y^{bd}/x^{ad})^{1/b}]$ to $\#P^1_C$. Thus, $\~{\psi}$ lifts uniquely to a morphism 
\[
\psi: W_1[(y^{bd}/x^{ad})^{1/b}] \to W_1[x^{1/b}, (x^{1/b})^a/y].
\] 
Observe that
\begin{align*}
\psi^*(\phi^*((y^{bd}/x^{ad})^{1/b})) & = \psi^*(y^d/(x^{1/b})^{ad}) \\
& = \frac{y^d}{\left[(x^{ad}/y^{bd})^{1/b} \cdot (y^d/x^c)\right]^{ad}}\\
& = y^d  \left(\frac{x^{acd}}{y^{ad^2}}\right) \left[\left(\frac{y^{bd}}{x^{ad}} \right)^{1/b}\right]^{ad}\\
& = \left(\frac{x^{acd}}{y^{d(ad-1)}}\right)  \left[\left(\frac{y^{bd}}{x^{ad}} \right)^{1/b}\right]^{(ad-1)} \left(\frac{y^{bd}}{x^{ad}} \right)^{1/b}\\
& = \left(\frac{y^{bd}}{x^{ad}} \right)^{1/b},
\end{align*}
since $ad - bc =1$.  Thus, by the universal property of the scheme $W_1[(y^{bd}/x^{ad})^{1/b}]$, we see that $\phi \circ \psi$ is the identity morphism of $W_1[(y^{bd}/x^{ad})^{1/b}]$.  A similar computation yields
\[
\phi^*(\psi^*(x^{1/b}))  = \phi^*((x^{ad}/y^{bd})^{1/b} \cdot (y^d/x^c))  = \left(\frac{(x^{1/b})^{ad}}{y^d}\right) \left(\frac{y^d}{x^c} \right) = x^{1/b}. 
\]
Thus, by the universal property of the scheme $W_1[x^{1/b}, (x^{1/b})^a/y]$, we see that $\psi \circ \phi$ is the identity morphism of $W_1[x^{1/b}, (x^{1/b})^a/y]$. Thus, $\phi$ and $\psi$ are isomorphisms, which concludes the proof of the theorem. 
\end{proof}

\section{The special case of nodal blowups}
\label{section nodal blowups}

We will keep the conventions and notation from Section \ref{subsection nodal blowups of ruled surfaces}.  The aim of this section is to show that Theorem \ref{theorem ruled iterations of S} holds for nodal blowups, that is, elements of $\@N$.

\begin{theorem}
\label{theorem nodal case}
Let $X \in \@N$ and let $U$ be a henselian local scheme over $k$. Then $$\pi_0^{\#A^1}(X)(U) = \mathcal S^2(X)(U) = \mathcal S^n(X)(U),$$ for all $n \geq 2$. 
\end{theorem}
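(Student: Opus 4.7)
The plan is to prove both claimed equalities simultaneously by showing that any two $n$-ghost homotopic sections $\alpha_1, \alpha_2: U \to X$ are already $2$-ghost homotopic. For $U$ henselian local, $\pi_0^{\#A^1}(X)(U)$ is precisely $X(U)$ modulo the union of all $n$-ghost homotopy relations, so this single statement will yield both $\@S^n(X)(U) = \@S^2(X)(U)$ for $n \geq 2$ and $\@S^2(X)(U) = \pi_0^{\#A^1}(X)(U)$. Concretely, a $2$-ghost homotopy in $X$ is a $1$-ghost homotopy in $\@S(X)$, so I will produce the data required by Lemma \ref{lemma S^2 agrees with pi0}.

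I would first reduce to a canonical situation. Since $\pi: X \to \#P^1_C \to C$ has $\#A^1$-rigid target $C$, Lemma \ref{lemma strategy ruled} forces every $n$-ghost homotopy to lie over a common morphism $\gamma: U \to C$, and Lemma \ref{lemma trivial case} handles all subcases except when $\gamma(u) = c_0$ and $\gamma$ maps the generic point of $U$ to the generic point of $C$ (Remark \ref{remark trivial case}). Within this remaining scenario, Lemma \ref{lemma topological ruled surfaces} and Remark \ref{remark homotopies on ruled surfaces} dictate that $\Sp(\@H)$ maps into an open subscheme of $X$ determined by $\alpha_1(u) \in X_{c_0}$. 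If $\alpha_1(u)$ is a node of $X_{c_0} \cup \ell_{\infty}$, or a smooth point on a pseudo-line $F$ that meets at least two other pseudo-lines, then $F \setminus Z$ is $\#A^1$-rigid, forcing $\alpha_2(u) = \alpha_1(u)$ and trapping the homotopy inside some $U_{r,s}$ or $U_r$ of Definition \ref{definition U_r}. Iterating Lemma \ref{lemma elementary transformation} identifies these with explicit models inside $\#P^1_C$, on which a direct $\#A^1$-chain homotopy between $\alpha_1$ and $\alpha_2$ can be written down, so in these subcases $\alpha_1 = \alpha_2$ already in $\@S(X)(U)$.

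The remaining \emph{hard} case is when $\alpha_1(u)$ lies as a smooth point on the end-of-chain line $\ell_{r_1}$ (the unique line meeting only one other pseudo-line); then $F \setminus Z \cong \#A^1$ is not $\#A^1$-rigid and $\alpha_2(u)$ can genuinely differ from $\alpha_1(u)$. This is the technical heart of the proof, and it is where I would invoke Theorem \ref{theorem etale cover}. Writing $r_1 = a/b$ in reduced form, the theorem provides a $b$-sheeted finite \'etale cover $U_a \to U_{r_1}$ realized as $(U_{r_1} \times_{C,\phi_b} C)[x^a/y]$; repeated application of Lemma \ref{lemma elementary transformation} then $C$-identifies $U_a$ with $U_0 = \#P^1_C \setminus \{(c_0,[0:1])\}$, an affine-line bundle over $C$. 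On this cover an honest $\#A^1$-homotopy between lifts of $\alpha_1$ and $\alpha_2$ can be written down explicitly in the coordinates $x$ and $y$. I would then build a $2$-ghost homotopy of $U$ in $X$ by assembling an elementary Nisnevich cover of $\#A^1_U$ from this \'etale cover and the complement of the closed fibre $\{u\} \times \#A^1$, equipping each piece with an $\#A^1$-homotopy and matching them on the intersection by a chain homotopy, producing precisely the data required by Lemma \ref{lemma S^2 agrees with pi0}. The gluing step relies on the universal properties of the constructions $X[f]$, $X[f^{1/n}]$ and $X\{f\}$ from Section \ref{subsection constructions} to ensure that the candidate homotopies compose correctly, and it is here that the main technical difficulties will concentrate. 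Finally, Proposition \ref{proposition local generator ghost homotopy} prevents any additional identifications at levels $n > 2$, since it constrains the pullback ideals $\alpha_i^*(\@I_T)$ and $h_{\@H}^*(\@I_T)$ to be compatible along any ghost chain homotopy.
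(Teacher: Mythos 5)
There is a genuine gap, and it sits exactly where you declare the problem easy. You claim that when $\alpha_1(u)$ is a node, or a smooth point of a line meeting two other pseudo-lines, the $\A^1$-rigidity of $F\setminus Z$ lets you write down a \emph{direct} $\A^1$-chain homotopy after identifying $U_{r,s}$ or $U_r$ with a model via Lemma \ref{lemma elementary transformation}, so that $\alpha_1=\alpha_2$ already in $\@S(X)(U)$. This is false in general, and it is precisely the hard case of the paper: rigidity only forces the image of the closed fibre $\Sp(\@H)_u$ to be constant, i.e.\ $\alpha_1(u)=\alpha_2(u)$; it says nothing about the sections away from $u$. In the paper's coordinates (Proposition \ref{proposition restricted nodal}), a section through a node corresponds to $\beta_r$ with $r\mid r_0$, $r_0\notdivides r$ (case (B)), and two such sections $\beta_{r_1},\beta_{r_2}$ with $r_1,r_2$ unit multiples of $r$ are in general \emph{not} naively chain homotopic in $X$ --- this is exactly why $\@S(X)\neq\@S^2(X)$ for non-minimal $X$ (Remark \ref{remark naive ruled}). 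What the paper proves is that they are $n$-ghost homotopic iff $r_2/r_1-1\in\sqrt{\<r,r_0/r\>}$ (Claim 1, via Proposition \ref{proposition local generator ghost homotopy} plus a careful analysis of the total transform and $\G_m$-rigidity), and that this condition suffices to build an explicit $1$-ghost homotopy over a Zariski cover of $\A^1_U$ avoiding the blowup centres (Claim 2), so that Lemma \ref{lemma S^2 agrees with pi0} applies. Your proposal contains no analogue of this necessary-and-sufficient condition; your closing appeal to Proposition \ref{proposition local generator ghost homotopy} only yields that $\alpha_1^*(\@I_T)=\alpha_2^*(\@I_T)$, which is far too weak to show that no new identifications appear for $n>2$. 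Without such an invariant that is preserved by all $n$-ghost homotopies and realized by a $1$-ghost homotopy, the stabilization $\@S^2(X)(U)=\@S^n(X)(U)=\pi_0^{\A^1}(X)(U)$ does not follow.

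Secondly, your designated ``hard case'' is misplaced. The lines with $F\setminus Z\cong\A^1$ are the two end lines of the chain, whose slopes are integers ($0$ and $s_p$), so in Theorem \ref{theorem etale cover} one has $b=1$ and the $b$-sheeted \'etale cover is trivial; that theorem buys you nothing here. In the paper it is used only in Section \ref{section general case}, in the induction on the number of non-nodal blowups, not in the nodal case. For nodal blowups the end-line situation is handled by elementary transformations (Lemma \ref{lemma elementary transformation}) reducing to $\@N'$ and then running the same case (A)/(B) analysis of Proposition \ref{proposition restricted nodal}; even there the straight-line homotopy need not avoid the deleted node, so some ghost construction is still required. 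So both halves of your case division need to be redone: the ``rigid'' cases require the full Claim 1/Claim 2 mechanism, and the ``hard'' case does not need the \'etale cover.
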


Theorem \ref{theorem nodal case} is a direct consequence of Proposition \ref{proposition restricted nodal} and Proposition \ref{proposition nodal reduction} below.  It will be convenient to first prove Theorem \ref{theorem nodal case} in the case when $X$ belongs to a special subset of $\@N$, which we describe in Notation \ref{notation N'}. 

Let $X \in \@N$. This means that there is a sequence of morphisms 
\begin{equation}
\label{eqn blowup}
X = X_p \to \cdots \to X_1 \to X_0 = \#P^1_C, 
\end{equation}
where for all $i \geq 1$, $X_{i} \to X_{i-1}$ is a blowup at some node. The discussion in Section \ref{subsection nodal blowups of ruled surfaces} shows that there exist ordered pairs of non-negative integers $(m_0, n_0), (m_1,n_1), \ldots, (m_p, n_p)$, $(m_{p+1}, n_{p+1})$ with the following properties:
\begin{itemize}
\item[(a)] $m_i$ and $n_i$ are coprime for each $i$; 
\item[(b)] $(m_0, n_0) = (0,1)$, $(m_1, n_1) = (1,0)$;
\item[(c)] For any $i$, there exist integers $j$ and $k$ such that $1 \leq j < k< i$ such that $m_k n_j - m_j n_k = 1$ and $m_i = m_j + m_k$, $n_i = n_j + n_k$;
\item[(d)] $X_i = X_{i-1}[x^{m_{i+1}}/y^{n_{i+1}}]$ (in the sense of Proposition \ref{proposition resolving indeterminacies}).
\end{itemize}
Note that $n_i \neq 0$ for any $i \neq 1$.  From this description, it is clear that $X$ can be obtained from $\#P^1_C$ by blowing up the ideal $\@I = \prod_{i=1}^p \<x^{m_i}, y^{n_i}\>$.  The lines on $X$ in the sense of Notation \ref{notation nodal blowups} are the $\ell_{m_i/n_i}$.  

We will need to keep track of pairs of integers $(i,j)$ such that the pseudo-lines $\ell_{m_i/n_i}$ and $\ell_{m_j/n_j}$ intersect (this will be important in Proposition \ref{proposition nodal reduction}).  Thus, it will be convenient to relabel these ordered pairs as $(a_0, b_0)$, $(a_1, b_1), \ldots$, $(a_{p+1},b_{p+1})$, in such a way that $a_0/b_0 = 0 < a_1/b_1 < \ldots <a_p/b_p$ and $(a_{p+1}, b_{p+1}) = (1,0)$.  Set $s_i = a_i/b_i$ for $0 \leq i \leq p$ and $s_{p+1} = \infty$.   Note that $s_p = a_p/b_p$ is always an integer.   There are exactly $p+2$ pseudo-lines on $X$, namely $\ell_{0} = \ell_{s_0}$, $\ell_{s_1}$, $\cdots$, $\ell_{s_p}$ and $\ell_{s_{p+1}} = \ell_{\infty}$. For $i < j$, the pseudo-lines $\ell_{s_i}$ and $\ell_{s_{j}}$ meet if and only if $j = i+1$. The node in which $\ell_{s_i}$ and $\ell_{s_{i+1}}$ meet is locally given by the ideal $\<x^{a_{i+1}}/y^{b_{i+1}}, y^{b_{i}}/x^{a_{i}}\>$.  

\begin{notation}
\label{notation N'}
We let $\@N'$ denote the subset of $\@N$ consisting of blowups $X$ of $\P^1_C$ such that there exists a sequence of ordered pairs of non-negative integers $(a_0,b_0), \ldots, (a_{p+1}, b_{p+1})$ such that:
\begin{itemize}
\item[(1)] $(a_0,b_0) = (0,1)$ and $(a_{p+1}, b_{p+1}) = (1,0)$, 
\item[(2)] $a_i/b_i \leq 1$ for all $1 \leq i \leq p$,
\item[(3)] if $s_i = a_i/b_i$ for $1 \leq i \leq p$, then $\ell_0, \ell_{s_1}, \ldots, \ell_{s_p}, \ell_{\infty}$ are all the pseudo-lines on $X$.  
\end{itemize}  
\end{notation}

We first prove Theorem \ref{theorem nodal case} by restricting to this special subset $\@N'$ of nodal blowups in Proposition \ref{proposition restricted nodal} below and then reduce the general case to this one by means of elementary transformations (in the form of Lemma \ref{lemma elementary transformation}) in Proposition \ref{proposition nodal reduction}.  

\begin{proposition}
\label{proposition restricted nodal}
Let $X \in \@N'$ and let $U$ be a henselian local scheme over $k$. Then $$\pi_0^{\#A^1}(X)(U) = \mathcal S^2(X)(U) = \mathcal S^n(X)(U),$$ for all $n \geq 2$.
\end{proposition}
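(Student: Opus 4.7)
The plan is to show that any two sections $\alpha_1, \alpha_2 : U \to X$ representing the same element of $\mathcal S^n(X)(U)$ for some $n \geq 2$ already represent the same element of $\mathcal S^2(X)(U)$; combined with Lemma \ref{lemma S^2 agrees with pi0}, this will yield both equalities in the statement. Standard reductions come first: since $X \to \P^1_C \to C$ and $C$ is $\A^1$-rigid, Lemma \ref{lemma strategy ruled} provides a morphism $\gamma : U \to C$ through which every ghost homotopy between $\alpha_1$ and $\alpha_2$ factors, and by Lemma \ref{lemma trivial case} together with Remark \ref{remark trivial case} we may assume $\gamma$ sends the generic point of $U$ to the generic point of $C$ and the closed point $u$ to $c_0$. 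Applying Proposition \ref{proposition local generator ghost homotopy} to each blowup $X_i \to X_{i-1}$ in the factorization of $X \to \P^1_C$ yields $\alpha_1^*(\mathcal I_{T_i}) = \alpha_2^*(\mathcal I_{T_i})$ for every nodal centre $T_i$, and Lemma \ref{lemma topological ruled surfaces} then forces $\alpha_1(u)$ and $\alpha_2(u)$ to lie on the same pseudo-line of $X$.

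The argument then proceeds by case analysis on the location of $\alpha_j(u)$ in the closed fibre of $X \to C$. When $\alpha_j(u)$ lies at a node, or on a pseudo-line $\ell_{s_i}$ with $1 \leq i \leq p-1$, Remark \ref{remark homotopies on ruled surfaces} tells us that the fibre of the ghost homotopy over $u$ is constant (the middle-line sub-case uses that $\ell_{s_i}$ minus its two adjacent nodes is $\A^1$-rigid, being isomorphic to $\G_m$), and in particular $\alpha_1(u) = \alpha_2(u)$. In this situation I would write down an explicit $1$-ghost homotopy by working on an appropriate open subscheme $U_{s_i, s_{i+1}}$, or in a Zariski neighbourhood of the node in question; after pulling back via the \'etale covering $\phi_b$ from Theorem \ref{theorem etale cover}, such a neighbourhood becomes a trivial $\P^1$-bundle over an open in $C$ on which a straight-line homotopy on the fibre coordinate is readily available, and since $U$ is henselian local the \'etale cover splits, so the resulting homotopy descends to the required $1$-ghost homotopy.

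The genuinely delicate case is when $\alpha_j(u)$ lies on one of the endpoint pseudo-lines $\ell_0$ or $\ell_{s_p}$, whose complement of the adjacent node in the closed fibre is $\A^1$ rather than $\G_m$: the fibre of the ghost homotopy over $u$ can then be non-constant and $\alpha_1(u) \neq \alpha_2(u)$ is possible. I would build the required $1$-ghost chain homotopy in two pieces. First, a $1$-ghost homotopy moving $\alpha_1$ along the $\A^1$-bundle structure of $U_0$ (or of $U_1$, which is isomorphic to $U_0$ over $C$ by Lemma \ref{lemma elementary transformation}) to a section whose value at $u$ matches $\alpha_2(u)$; second, a $1$-ghost homotopy connecting this adjusted section to $\alpha_2$ via the previous paragraph. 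The principal obstacle is the explicit construction of the first piece: one must choose an elementary Nisnevich cover of $\A^1_U$ adapted to the blowup structure of $X$ and invoke Theorem \ref{theorem etale cover} together with Proposition \ref{proposition local generator ghost homotopy} to verify that the resulting morphism genuinely lifts all the way to $X$ rather than only extending generically, that is, that the pullbacks of the ideal sheaves of every blowup centre along the homotopy are generated by the same elements of $R$ as their pullbacks via $\alpha_j$.
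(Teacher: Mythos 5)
Your opening reductions are sound: fixing $\gamma$ via Lemma \ref{lemma strategy ruled}, restricting to $\gamma$ mapping the closed and generic points correctly, and invoking Proposition \ref{proposition local generator ghost homotopy} together with Lemma \ref{lemma topological ruled surfaces} to pin down the common location of $\alpha_1(u)$ and $\alpha_2(u)$. However, the case analysis that follows does not line up with what the argument actually needs, and the essential technical content is missing. Writing $r_0 = \gamma^*(x)$ and $r_i = \alpha_i^*(y)$, the paper's dichotomy is algebraic: Case (A), where $r_0 \mid r_1, r_2$, and Case (B), where $r_1, r_2$ are unit multiples of a common $r$ with $r \mid r_0$, $r_0 \nmid r$. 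Geometrically, Case (A) corresponds only to $\tilde\alpha_j(u)$ lying on $\ell_{s_p}$ away from its node with $\ell_{s_{p-1}}$ --- which is where the naive straight-line homotopy $y \mapsto r_1T + r_2(1-T)$ lifts to $X$, and it does so \emph{precisely because} of the defining $\@N'$-condition $a_i \leq b_i$. Everything else, including the nodes and the middle lines that you classify as the easy case, falls into Case (B), where a straight-line homotopy fails to lift: with $\delta = r_2/r_1 - 1$, the pullback ideal $\<r_0^{a_i}, r^{b_i}(1 + \delta(1-T))^{b_i}\>$ is not locally principal at a point of $\A^1_U$ where both $r_0^{a_i}/r^{b_i}$ and $1+\delta(1-T)$ lie in the maximal ideal, and the constancy of the fibre over $u$ (which you correctly observe via Remark \ref{remark homotopies on ruled surfaces}) says nothing about other points of $U$.

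Two steps you leave aside are in fact the heart of the proposition. First, a \emph{necessary} condition must be derived for Case (B): the paper shows that if lifts of $\beta_{r_1}, \beta_{r_2}$ are $n$-ghost homotopic on $X_\gamma$, then $\delta \in \sqrt{\<r, r_0/r\>}$, by blowing up along two further centres and applying Proposition \ref{proposition local generator ghost homotopy} once more to force the ghost homotopy through a $\G_m$-fibre. Without this condition one cannot even set up the Zariski cover needed next. Second, and this is flagged as the trickiest step of the whole paper, one must construct an explicit $1$-ghost homotopy satisfying the hypotheses of Lemma \ref{lemma S^2 agrees with pi0}: the paper does this by covering $\A^1_U$ by $V_1 = \A^1_U \setminus \@Z(\<r_0/r, 1+\delta S\>)$ and $V_2 = \A^1_{U \setminus U(r)}$, and by writing down morphisms into $\Proj R[Y_0,Y_1]$ on each piece ($Y_0/Y_1 \mapsto (1+\delta S)(r_1/r)$ and the constant section at $1$, respectively) that lift to $X_{\gamma,r}$ by construction. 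Your proposal gestures at ``choosing an elementary Nisnevich cover adapted to the blowup structure'' but does not produce it, and a straight-line homotopy on a trivialised $\P^1$-bundle will not do the job. Finally, Theorem \ref{theorem etale cover} is not the right tool here: the paper reserves it for the inductive descent on $N_X$ in the proof of Theorem \ref{theorem ruled iterations of S}, while the passage from $\@N$ to $\@N'$ is handled by elementary transformations in Proposition \ref{proposition nodal reduction}. As written, your proposal correctly assembles the preliminary lemmas but omits both the rigidity statement and the concrete ghost-homotopy construction that make the result true.
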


\begin{proof}
We use the notation that has been set up above. Thus, there exists a sequence of ordered pairs of non-negative integers $(a_0,b_0), \ldots, (a_{p+1}, b_{p+1})$ and morphisms
\[
X = X_p \to \cdots \to X_1 \to X_0 = \#P^1_C
\]
such that $X_i = X_{i-1}[x^{a_i}/y^{b_i}]$ and $a_i/b_i \leq 1$ for all $1 \leq i \leq p$ with $(a_p, b_p)=(1,1)$.  By Remark \ref{remark trivial case}, we are reduced to determining when two $\gamma$-morphisms $U \to \P^1_C$ that lift to $X$ and map the closed point of $U$ to $(c_0, [0:1])$ are $n$-ghost homotopic for $n \geq 1$. 

Write $U = \Spec R$, where $R$ is a smooth henselian local ring with maximal ideal $\mathfrak{m}$ and write $r_0 = \gamma^*(x) \in R$ for the pullback of the uniformizing parameter $x$ on the coordinate ring of $C$. We work on $X_{\gamma}$ defined by the pullback
\[
\xymatrix{
X_{\gamma} \ar[r] \ar[d] & X \ar[d] \\
U \ar[r]^-{\gamma}        & C
}
\]
and show that any two sections $U \to X_{\gamma}$ which are $n$-ghost homotopic for some $n$ are actually $1$-ghost homotopic and also map to the same section of $\pi_0^{\#A^1}(X_{\gamma})(U)$. Any $\gamma$-morphism $\alpha: U \to \#P^1_C$ induces a section $\beta: U \to \#P^1_U$. If the $\gamma$-morphism $\alpha$ maps the closed point $u$ of $U$ to $(c_0, [0:1])$, the corresponding section $\beta: U \to \#P^1_U$ maps $u$ to $(u, [0:1])$. Hence it factors through the open subscheme $\#A^1_U = \Spec R[y] = \Spec R[Y/Z] \subset \#P^1_U$. Thus, any such $\beta$ is characterized by an $R$-algebra homomorphism $\beta^*: R[y] \to R$. Clearly, $\beta^*(y) = \alpha^*(y)$. Moreover, $\alpha$ maps $u$ to $(c_0, [0:1])$ if and only if $\alpha^*(y)$ lies in the maximal ideal $\mathfrak{m}$ of $R$. 

For any $r \in R$, let $\beta_r: U \to \#P^1_U$ be the morphism induced by the $R$-algebra homomorphism $R[y] \to R$, $y \mapsto r$. Note that any section $\beta: U \to \#P^1_U$ such that $\beta(u) = (u, [0:1])$ is of the form $\beta_r$ for some unique $r \in \mathfrak{m}$. Such a section lifts to $X_{\gamma}$ if and only if the ideal $\<r_0^{a_i}, r^{b_i}\>$ is principal for every $i$, $1 \leq i \leq p$. 

Let $r_1, r_2 \in \mathfrak{m}$ be such that the sections $\beta_{r_1}$ and $\beta_{r_2}$ lift to $X_{\gamma}$ and are connected by an $n$-ghost homotopy $\@H$ which also lifts to $X_{\gamma}$. As $(a_p,b_p) = (1,1)$, the sections $\beta_{r_1}$, $\beta_{r_2}$  and the $n$-ghost homotopy $\@H$ lift to $X_1 \times_{U, \gamma} \#P^1_U$ which is obtained from $\#P^1_U$ by blowing up the ideal $I_1 =\<r_0, y\>$. By Proposition \ref{proposition local generator ghost homotopy}, we see that the the ideals (or ideal sheaves) $\beta_{r_1}^*(I_1)$, $\beta_{r_2}^*(I_1)$, $h_{\@H}^*(I_1)$ are all generated by the pullbacks of a fixed element of $R$, which we denote by $r$.  Thus, we have the equality of ideals $\<r_0, r_1\> = \<r_0, r_2 \>$ of $R$ and moreover, this ideal is principal.  Since $R$ is a local domain, $\<r_0, r_i\>$ is principal if and only if $r_0 | r_i$ or $r_i|r_0$, for $i=1,2$.  There are clearly two possibilities:
\begin{itemize}
\item[(A)] $r_0 | r$: This happens when the ideals $\alpha_1^*(I_1)$, $\alpha_2^*(I_1)$ are generated by $r_0$. Thus $r_0|r_1$ and $r_0|r_2$. 
\item[(B)] $r|r_0$ but $r_0 \notdivides r$: In this case, for $i = 1,2$, the ideal $\alpha_i^*(I_1) = \<r_0, r_i\>$ is generated by $r_i$. As $\<r_0, r_1\> = \<r_0, r_2\>$, we see that $r_1$ and $r_2$ must be unit multiples of $r$.   
\end{itemize} 
At least one of (A) or (B) must hold for the lifts of $\beta_{r_1}$ and $\beta_{r_2}$ to $X_{\gamma}$ to be $n$-ghost homotopic. Of course, this is only a necessary condition for the lifts to be $n$-ghost homotopic. We will see that the condition in case (A) is actually sufficient for the two lifts to be $\#A^1$-chain homotopic. However, in case (B), an additional condition is required. 

Case (A) is very easy to deal with. Consider the homotopy 
\[
h: \Spec R[T] = \#A^1_U \to \#A^1_U = \Spec R[y] 
\]   
defined by $y \mapsto r_1T + r_2(1-T)$. Then as $a_i \leq b_i$ for all $i$, and as $r_0$ divides $r_1$ and $r_2$, we get
\begin{align*}
h^*(\<r_0^{a_i}, y^{b_i}\>) & = \<r_0^{a_i}, (r_1T + r_2(1-T))^{b_i}\> \\
& = \<r_0^{a_i} \> 
\end{align*}
which is a principal ideal. Thus, the homotopy $h$ lifts to $X_{\gamma}$. Thus, we see that condition (A) is actually sufficient for the lifts of $\beta_{r_1}$ and $\beta_{r_2}$ to be $\#A^1$-chain homotopic. Thus, in this case they also map to the same element of $\pi_0^{\#A^1}(X_{\gamma})(U)$. 

Now, we consider case (B). Thus, $r$ is an element of $R$ such that $r|r_0$, $r_0 \notdivides r$ and $r_1$ and $r_2$ are unit elements of $\mathfrak{m}$ such that $r_i/r$ is a unit for $i = 1,2$. We assume that $\beta_{r_i}$ lifts to $X_{\gamma}$ for $i = 1,2$.  Claim $1$ below gives a necessary algebraic condition for $\beta_{r_1}$ and $\beta_{r_2}$ to be $n$-ghost homotopic.  Claim $2$ below will show that the algebraic condition given in Claim $1$ is sufficient for $\beta_{r_1}$ and $\beta_{r_2}$ to be $1$-ghost homotopic.  This is the trickiest part of the proof and involves constructing an explicit $1$-ghost homotopy over an elementary Nisnevich cover of $\A^1_U$ (in fact, a Zariski cover) so that we can apply Lemma \ref{lemma S^2 agrees with pi0} to conclude that the above sections map to the same element of $\pi_0^{\#A^1}(X_{\gamma})(U)$.

\medskip

\noindent \emph{\bf Claim 1:}  If there exists an $n$-ghost homotopy $\mathcal H$ connecting $\beta_{r_1}$ to $\beta_{r_2}$ which lifts to $X_{\gamma}$, then
\[
\frac{r_2}{r_1}-1 \in \sqrt{\<r, r_0/ r\>}.
\]
\noindent \emph{Proof of Claim 1:} Suppose there exists an $n$-ghost homotopy $\mathcal H$ connecting $\beta_{r_1}$ to $\beta_{r_2}$ which lifts to $X_{\gamma}$. By Proposition \ref{proposition local generator ghost homotopy}, the ideal sheaf $h^{*}_{\mathcal H}(I_1)$ is generated by $f_{\mathcal H}^*(r)$. At any point $v$ of $\Sp(\mathcal H)$, the ideal $h_{\mathcal H}^*(I_1)_v$ is equal to $\<f_{\mathcal H}^*(r_0), h^{*}_{\mathcal H}(y)\>$. Thus, we see that the ideal $I(r):=\<f_{\mathcal H}^*(r), h^{*}_{\mathcal H}(y)\>$ of $\mathcal O_{\Sp(\mathcal H),v}$ is principal. Thus if $X_{\gamma,r}$ denotes the scheme obtained by blowing up $\mathbb P^1_U$ at the closed subscheme $\@Z(h_{\@H}^*(\@I_{\gamma}) \cdot I(r))$ we see that the $n$-ghost homotopy $\@H$ lifts uniquely to $X_{\gamma,r}$. Let us denote this lift by $\@H^r$. The scheme $X_{\gamma,r}$ can also be constructed by first blowing up the closed subscheme $\mathcal Z(I(r))$ to construct $X(r)$ and then blowing up the total transform of the ideal sheaf $h_{\mathcal H}^*(\@I_{\gamma})$ on $X(r)$. We will now examine this total transform. 

We view $X(r)$ as a closed subscheme of $U \times \mathbb P^1 \times \mathbb P^1$ where we use the homogeneous coordinates $Y,Z$ for the first copy of $\mathbb P^1$ and $Y_0, Y_1$ for the second copy. Then, $X(r)$ is given by the equation $rY_0Z = Y_1Y$. It suffices to compute the total transform of $h_{\mathcal H}^*(\@I_{\gamma})$ in the open patch $Z \neq 0$ (since both $I(r)$ and $\@I_{\gamma}$ have support within this patch). 

We will now show that there exist closed subschemes $T_1$ and $T_2$ of $X(r)$ such that:
\begin{itemize}
\item[(a)] The blowup of $X(r)$ at $T_1 \cup T_2$ is isomorphic to $X_{\gamma,r}$ over $X(r)$. 
\item[(b)] $T_1$ is supported in the closed subscheme $S_1$ of $X(r)$ which is defined in the open patch $Y_1 \neq 0$ by the ideal $\<r_0/r, Y_0/Y_1\>$. 
\item[(c)] $T_2$ is supported in the closed subscheme $S_2$  of $X(r)$ which is  defined in the open patch $Y_0 \neq 0$ by the ideal $\<r, Y_1/Y_0\>$. 
\end{itemize}

For this it will suffice to show that for each $i$, the total transform $I_i$ is the product of a locally principal ideal with an ideal $J_i$ where $J_i$ is supported in either $S_1$ or $S_2$. We will consider two cases - either $r_0^{a_i}|r^{b_i}$ or $r^{b_i}|r_0^{a_i}$ (which are not mutually exclusive). 

Suppose $r_0^{a_i}|r^{b_i}$. Notice that since we already know that $r|r_0$, this implies that $r_0$ and $r$ have the same squarefree part. On the open patch where $Y_1 \neq 0$,   
\begin{align*}
\<r_0^{a_i}, y^{b_i}\>  & =  \<r_0^{a_i}\>\;\< 1, (r^{b_i}/r_0^{a_i})(Y_0/Y_1)^{b_i}\> \\
                  & =  \< r_0^{a_i}\>
\end{align*}
which is principal. On the open patch where $Y_0 \neq 0$, we simply observe that since $r$ and $r_0$ have the same squarefree part, the ideal $\<r_0^{a_i}, (Y_1/Y_0)^{b_i}\>$ has the same radical as $\<r, Y_1/Y_0\>$ and the associated closed scheme is contained in $T_2$. 

If $r^{b_i}|r_0^{a_i}$, then on the open patch $Y_1 \neq 0$, we have
\[
\<r_0^{a_i}, y^{b_i}\> = \<r^{b_i}\> \;\<(r_0^{a_i}/r^{b_i}), (Y_0/Y_1)^{b_i}\> \text{.}
\]
We claim that the support of the ideal $\<(r_0^{a_i}/r^{b_i}), (Y_0/Y_1)^{b_i}\>$ is contained in $T_1$. For this, it suffices to show that any prime $\mathfrak p$ of $R$ which does not contain $r_0/r$ cannot contain $r_0^{a_i}/r^{b_i}$. To see this, observe for any such prime $\mathfrak p$, $r_0/r$ is a unit in $R_{\mathfrak p}$ then
\[
(r_0)^{a_i}/r^{b_i} = (r_0/r)^{a_i} (1/r)^{b_i-a_i} \text{.}
\]
Since $(r_0)^{a_i}/r^{b_i} \in R_{\mathfrak p}$, this shows that $(1/r)^{b_i - a_i} \in R_{\mathfrak p}$. Since $r \in R_{\mathfrak p}$, it follows that $r$ is actually a unit in $R_{\mathfrak p}$. Hence $r_0^{a_i}/r^{b_i}$ is also a unit in $R_{\mathfrak p}$, as required. 

Observe that the projection of $X(r) \subset U \times \#P^1 \times \#P^1$ onto the first and third factors can be viewed as the blowup of $\#P^1_U = \Proj R[Y_0, Y_1]$ at the closed subscheme $\<r, Y_1\>$. Thus, the projection map $\theta: X_{\gamma, r} \to \Proj R[Y_0,Y_1]$ is just the blowup of $\Proj R[Y_0,Y_1]$ at the closed subschemes $\mathcal Z(\<r,Y_1\>)$ and $\mathcal Z(\<r_0/r,Y_0\>)$. Let $\mathcal H^{\prime}$ be the $n$-ghost homotopy $\theta \circ \mathcal H^r$. Then if $\beta^{\gamma,r}_{r_i}$ denotes the lift of $\beta_{r_i}$ to $X_{\gamma,r}$, we see that $\theta \circ \beta^{\gamma,r}_{r_1}$ is the morphism corresponding to the $R$-algebra homomorphism $R[Y_0/Y_1] \to R$, $Y_0/Y_1 \mapsto r_1/r$. Thus, by Proposition \ref{proposition local generator ghost homotopy}, we see that $\mathcal H^{\prime}$ avoids the closed subschemes $\mathcal Z(\<r,Y_1\>)$ and $\mathcal Z(\<r_0/r, Y_0\>)$.  Let $U(r,r_0/r)$ denote the closed subscheme of $U$ corresponding to the ideal $\<r, r_0/r\>$ fo $R$.  Thus, the restriction of the $n$-ghost homotopy $\mathcal H^{\prime}$ to $U(r,r_0/r)$ factors through 
\[
\mathbb P^1_{U(r,r_0/r)} \setminus  (\{(0:1), (1:0)\} \times U(r,r_0/r))  \simeq \mathbb G_m \times U(r,r_0/r).
\]
As $\mathbb G_m$ is $\mathbb A^1$-rigid, the induced homotopy of the underlying reduced subscheme of $U$ is constant. In particular, since $\theta \circ \beta^{\gamma,r}_{r_i}: U \to \Proj R[Y_0,Y_1]$ corresponds to the homomorphism $Y_0/Y_1 \mapsto r_i/r$ for $i = 1,2$, we see that $r_2/r_1 = 1$ modulo any prime of $R$ containing the ideal $\<r,r_0/r\>$. Thus $r_2/r_1 - 1 \in \sqrt{\<r, r_0/ r\>}$. 

To summarize, we have proved that if   $r_1, r_2 \in \mathfrak{m}$ are unit multiples of $r$ such that $\beta_{r_1}$ and $\beta_{r_2}$ are $n$-ghost homotopic in $X_{\gamma}$, then $r_2/r_1-1$ lies in the ideal $\sqrt{\<r, r_0/ r\>}$. 

\noindent \emph{\bf Claim 2:} If $r_1, r_2 \in \mathfrak{m}$ are unit multiples of $r$ such that
\[
\delta:=\frac{r_2}{r_1}-1 \in \sqrt{\<r, r_0/ r\>},
\]
then $\beta_{r_1}$ and $\beta_{r_2}$ are $1$-ghost homotopic in $X_{\gamma}$.

\noindent \emph{Proof of Claim 2:}  We will explicitly construct a $1$-ghost homotopy $$\mathcal H^{\prime} := (V \to \mathbb A^1_U, W \to V \times_{\mathbb A^1_U} V, h, \~\sigma_0, \~\sigma_1, \mathcal H^W) $$ 
of $U$ in $\Proj R[Y_0,Y_1]$ connecting $\theta \circ \beta^{\gamma,r}_{r_1}$ to $\theta \circ \beta^{\gamma,r}_{r_2}$ which avoids the closed subschemes $\mathcal Z(\<r, Y_1\>)$ and $\mathcal Z(\<r_0/r, Y_0\>)$.

We first construct a Zariski open cover $V$ of $\mathbb A^1_U = \Spec R[S]$.  Let $U(r)$ denote the closed subscheme of $U$ corresponding to the ideal $\<r\>$ of $R$. Define 
\begin{eqnarray*}
V_1 & := & \mathbb A^1_U \backslash \mathcal Z(\<r_0/r, 1 + \delta S\>); \\ 
V_2 & := & \mathbb A^1_{U \backslash U(r)}.
\end{eqnarray*}
\noindent This is a Zariski open cover of $\mathbb A^1_U$. Indeed, to show this, it is enough to see that if $\mathfrak{p}$ is a prime ideal of $R[S]$ containing $(r_0/r, 1 + \delta S)$, then $\delta$ is a unit modulo $\mathfrak{p}$.  Since $\delta \in \sqrt{\<r, r_0/ r\>}$, we see that $\mathfrak{p}$ cannot be a prime containing $\<r,r_0/r\>$. But then $\mathfrak{p}$ must fail to contain either $r$ or $r_0/r$.  Since $\mathfrak{p}$ contains $r_0/r$, it cannot contain $r$.  Thus the point of $\mathbb A^1_U$ corresponding to $\mathfrak{p}$ must lie in $V_2$. We define $V = V_1 \amalg V_2$. We will now define morphisms $h_i: V_i \to \Proj R[Y_0,Y_1]$ for $i = 1,2$ and obtain $h: V \to \Proj R[Y_0, Y_1]$ by defining $h_{V_i} = h_i$. 

The morphism $h_1: V_1 \to \Proj R[Y_0,Y_1]$ is given by $Y_0/Y_1 \mapsto (1+\delta S)(r_1/r)$. Notice that the assignment $Y_0/Y_1 \mapsto 1 + \delta S$ actually defines a morphism $\mathbb A^1_U \to \Proj R[Y_0,Y_1]$ which factors through the complement of $\mathcal Z(\<r,Y_1\>)$. However, it fails to avoid $\mathcal Z(\<r_0/r, Y_0\>)$. Indeed the preimage of this subscheme is the closed subscheme of $\mathcal Z(\<r_0/r, 1 + \delta S\>)$ of $\mathbb A^1_U$. Hence, we have cut out the scheme $\mathcal Z(\<r_0/r, 1 + \delta S\>)$ to ensure that the morphism $h_1: V_1 \to \Proj R[Y_0,Y_1]$ gives rise to a morphism $V_1 \to X_{\gamma,r}$. 

The morphism $h_2: V_2 \to \Proj R[Y_0,Y_1]$ is given by composing the projection $V_2 \to U \backslash U(r)$ with the morphism $U\backslash U(r) \to \mathbb P^1_U$ given by $Y_0/Y_1 \mapsto 1$, that is, it is the ``constant section at $1$". In other words, it factors through $Y_1 \neq 0$ and is given by $Y_0/Y_1 \mapsto 1 \in R[S,r^{-1}][T]$. 

The morphisms $\sigma_0, \sigma_1: U \to \mathbb A^1_U = \Spec R[S]$ factor through the open immersion $V_1 \hookrightarrow \mathbb A^1_U$. We choose the induced morphisms $U \to V_1 \hookrightarrow V$ as the morphisms $\~\sigma_i$. 
We choose $W$ to be equal to $V \times_{\mathbb A^1_U}V = \coprod_{(i,j)} (V_{ij})$ where $V_{ij} = V_i \cap V_j$. The $0$-ghost chain homotopy $\mathcal H^W$ will be a single $\mathbb A^1$-homotopy which we will define separately on each piece $V_{ij}$. For $i = j$, we simply define it to be the constant homotopy on $V_i \cap V_i = V_i$. 

We now define a homotopy between $h_1|_{V_1 \cap V_2}$ and $h_2|_{V_1 \cap V_2}$.  This morphism will be designed to factor through $Y_0 \neq 0$ and thus will avoid $\mathcal Z(\<r_0/r,Y_0\>)$. On the other hand, by the definition of $V_2$ it is clear that it will also avoid $\mathcal Z(\<r,Y_1\>) \subset \mathcal Z(\<r\>) \subset \Proj R[Y_0,Y_1]$. We define this morphism by $Y_1/Y_0 \mapsto (1+\delta S)^{-1}(1-T) + T \in R[S,{r}^{-1}(1+\delta S)^{-1}][T]$.  The restriction of $\mathcal H^W$ to $V_{12}$ is taken to be this homotopy. The restriction of $\mathcal H^W$ to $V_{21}$ is defined to be the inverse of this homotopy.

Thus we have successfully constructed an $1$-ghost homotopy in $\Proj R[Y_0,Y_1]$ which avoids the schemes $\@Z(\<r_0/r,Y_0\>)$ and $\@Z(\<r,Y_1\>)$. Thus, it avoids the closed subscheme $T_1 \cup T_2$, and hence lifts to $X_{\gamma,r}$. It is easy to see that it connects $\theta \circ \beta^{\gamma,r}_{r_1}$ and $\theta \circ \beta^{\gamma,r}_{r_2}$. The lift of this $1$-ghost homotopy to $X_{\gamma,r}$ will connect $\beta^{\gamma,r}_{r_1}$ and $\beta^{\gamma,r}_{r_2}$. Thus, on composing with the blowup morphism $X_{\gamma,r} \to X_{\gamma}$, we get the desired $1$-ghost homotopy in $X_{\gamma}$. This shows that $\beta_{r_1}$ and $\beta_{r_2}$ are $1$-ghost homotopic as claimed. 

Finally, we observe that $\beta_{r_1}$ and $\beta_{r_2}$ have the same image in $\pi_0^{\#A^1}(X_{\gamma})$ by Lemma \ref{lemma S^2 agrees with pi0}.  This observation along with Claim 1 and Claim 2 completes the proof of the proposition. 
\end{proof}

\begin{remark}
\label{remark corner lines}
The explicit $1$-ghost homotopy constructed in the above proof has a useful property which we will now record for use in the proof of Theorem \ref{theorem ruled iterations of S}. However, for the sake of ready reference we summarize the notation and the assumptions. 

Let $X$ be in $\@N'$. Let $U = \Spec(R)$ be the spectrum of a henselian local scheme with closed point $u$. We fix a morphism $\gamma: U \to C$ which maps the closed point of $U$ to $c_0$ and the generic point of $U$ to the generic point of $C$. Let $\alpha_1, \alpha_2: U \to \#P^1_C$ be morphisms admitting lifts to $X$, which we denote by  $\~{\alpha}_1$ and $\~{\alpha}_2$ respectively. We assume that $\~{\alpha}_1$ and $\~{\alpha}_2$ are connected by an $n$-ghost homotopy on $X$. Let $s_0= 0 < \cdots < s_p=1$ be rational numbers such that $\ell_{s_0}, \ldots, \ell_{s_p}$ are all the lines on $X$. Suppose that $\~{\alpha}_1$ (and hence $\~{\alpha}_2$) maps $u$ to the point of intersection of $\ell_{s_i}$ and $\ell_{s_{i+1}}$. Let $u'$ be a point of $U$ such that $\~{\alpha}_1(u') = \~{\alpha}_2(u')$. The proof of Theorem \ref{theorem nodal case} tells us that there exists a $1$-ghost homotopy $\~{\@H}$, satisfying the hypothesis of Lemma \ref{lemma S^2 agrees with pi0}, connecting $\~{\alpha}_1$ and $\~{\alpha_2}$. We claim that the morphism $h_{\~{\@H}}$ maps the entire fiber $\Sp(\~{\@H})_{u'}$ to $\~{\alpha}(u')$. 

Let $r_i = \alpha_i^*(y)$ for $i = 1,2$. Our hypothesis that $\~{\alpha}_1(u)$ is the intersection of two lines implies that we are in case (B) in the proof of Theorem \ref{theorem nodal case}. So we know that $r_1, r_2$ are unit multiples of each other. 

First we recall how the $1$-ghost homotopy $\~{\@H}$ is defined. The $1$-ghost homotopy $\@H'$ of $U$ on $\#P^1_U$ constructed in the proof of Theorem \ref{theorem nodal case} lifts to $X(r_1)$. We note that the morphism $h_{\@H'}$ actually factors through the open subscheme of $\#P^1_U$ over which the morphism $X(r_1) \to \#P^1_U$ is an isomorphism. On composing with the morphism $X(r) \to X_{\gamma} \to X$, we will get the required $1$-ghost homotopy $\~{\@H}$. Thus, it will suffice to show that the morphism $h_{\@H'}$ maps the fiber $\Sp(\@H')_{u'} = \Sp(\~{\@H})_{u'}$ to a single point. 

Since the morphism $h_{\@H'}$ is defined by putting together the morphisms $h_1$, $h_2$ and the morphism defining the homotopy $\@H^{W}$, it will suffice to show that each of these morphisms map the fiber of their domains over $u'$ to a single point. Looking at the formulas defining these morphisms, it clearly suffices to show that $\delta = (r_1/r_2) - 1$ is equal to $0$ modulo $\mathfrak{p}$ where $\mathfrak{p}$ is the prime ideal of $R$ corresponding to $u'$. 

Suppose $\alpha_1(u') \in \ell_i \backslash \left( \bigcup_{j \neq i} \ell_{s_j} \right)$ for $i \notin \{0,p\}$. Then, by Lemma \ref{lemma topological ruled surfaces}, the morphism $h_{\@H'}$ maps $\Sp(\~{\@H})$ into $\ell_i \backslash \left( \bigcup_{j \neq i} \ell_{s_j} \right)$. As $i \notin \{0,p\}$, the line $\ell_i$ meets two other lines. Thus, $\ell_i \backslash \left( \bigcup_{j \neq i} \ell_{s_j} \right)$ is $\#A^1$-rigid and so $h_{\~{\@H}}$ maps $\Sp(\~{\@H})_{u'}$ to a single point. 

Suppose $\alpha_1(u') \in \ell_1 \backslash \left(\bigcup_{j \neq p} \ell_{s_j} \right)$. Then, it follows that $\alpha_1^*(x/y)$ is a unit at $u'$. Thus, $r_0/r_1$ (and hence $r_0/r_2$) is not in $\mathfrak{p}$. Since $\alpha_1(u') = \alpha_2(u')$, we deduce that $r_0/r_1 - r_0/r_2$ is in $\mathfrak{p}$. As
\[
\frac{r_0}{r_1} - \frac{r_0}{r_2} = \frac{r_0}{r_2} \left(\frac{r_2}{r_1} - 1 \right) = (r_0/r_2)\cdot \delta \text{,}
\]
we see that $\delta$ is in $\mathfrak{p}$ as required. 

On the other hand, suppose $\alpha_1(u') \in \ell_0 \backslash \left(\bigcup_{j \neq 0} \ell_{s_j} \right)$. Then, it follows that $\alpha_1^*(y)$ is a unit at $u'$. Thus, $r_1$ (and hence $r_2$) is not in $\mathfrak{p}$. Since $\alpha_1(u') = \alpha_2(u')$, we deduce that $r_2 - r_1$ is in $\mathfrak{p}$. As
\[
r_2-r_1 = r_1 \cdot  \left(\frac{r_2}{r_1} - 1 \right) = r_2\cdot \delta \text{,}
\]
we see that $\delta$ is in $\mathfrak{p}$ as required. 
\end{remark}

\begin{proposition}
\label{proposition nodal reduction}
Let $U$ be a henselian local scheme over $k$.  If  $$\pi_0^{\#A^1}(X)(U) = \@S^2(X)(U) = \@S^n(X)(U)$$ for all $n \geq 2$ and for all $X \in \@N'$, then the same is true for all $X \in \@N$.
\end{proposition}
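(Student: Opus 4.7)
The plan is to reduce the general case $X \in \@N$ to the case $X' \in \@N'$ settled in Proposition \ref{proposition restricted nodal}, via iterated application of the elementary transformations of Lemma \ref{lemma elementary transformation}. Fix $X \in \@N$ and $\gamma$-morphisms $\alpha_1, \alpha_2 : U \to X$ connected by an $n$-ghost homotopy $\@H$. Using Lemma \ref{lemma trivial case} and Remark \ref{remark trivial case}, I may assume $\gamma$ maps the closed point $u$ of $U$ to $c_0$ and the generic point of $U$ to the generic point of $C$. Proposition \ref{proposition local generator ghost homotopy} then forces $p := \alpha_1(u) = \alpha_2(u)$ in $X_{c_0}$, and applying that proposition to the ideal sheaves of both the fiber components of $X_{c_0}$ and of the two horizontal intersection points $\ell_\infty \cap X_{c_0}$, $\ell_{-\infty} \cap X_{c_0}$ (as in Remark \ref{remark homotopies on ruled surfaces}) shows that $h_{\@H} : \Sp(\@H) \to X$ factors through an open subscheme $V \subset X$ determined by the location of $p$: either $V = U_r$, when $p$ lies on a single fiber component $\ell_r$ away from every node, or $V = U_{r,s}$, when $p$ is the node of two adjacent pseudo-lines $\ell_r, \ell_s$ with $r > s$.

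Next I apply Lemma \ref{lemma elementary transformation} iteratively to $V$. If $V = U_r$ with $r > 1$, shifting $\lfloor r \rfloor$ times (or $r-1$ times when $r$ is an integer) produces $U_r \cong U_{r'}$ with $r' \in (0,1]$; if $r \in (0,1]$ already, no shift is needed. If $V = U_{r,s}$, write $s = c/d$ in reduced form and shift by $k := \lfloor s \rfloor$ to obtain $V \cong U_{r',s'}$ with $s' = s-k \in [0, 1 - 1/d]$ and $r' = s' + 1/(bd) \leq 1 - 1/d + 1/(bd) \leq 1$; if $s < 1$ then $k = 0$ and the same estimate gives $r \leq 1$ automatically. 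Hence $(r', s') \in [0,1]^2$ in all cases. Choose any $X' \in \@N'$ containing $\ell_{r'}$ (and, in the second case, the adjacent $\ell_{s'}$ meeting $\ell_{r'}$ in a node); such $X'$ can be produced by a sequence of nodal blowups following the Stern--Brocot construction within $[0,1]$. This yields an isomorphism
\[
V \xrightarrow{\sim} V'
\]
over $C$, where $V' \subset X'$ is the corresponding open subscheme ($U_{r'}$ or $U_{r', s'}$).

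Composing $\alpha_1, \alpha_2$, and $\@H$ with $V \xrightarrow{\sim} V' \hookrightarrow X'$ gives morphisms $\alpha_1', \alpha_2' : U \to X'$ and an $n$-ghost homotopy $\@H'$ on $X'$ between them. Proposition \ref{proposition restricted nodal} applied to $X' \in \@N'$ then yields a $1$-ghost homotopy $\tilde{\@H}'$ of $U$ on $X'$ connecting $\alpha_1'$ and $\alpha_2'$ that satisfies the hypotheses of Lemma \ref{lemma S^2 agrees with pi0}. The same rigidity argument as in the first paragraph---Proposition \ref{proposition local generator ghost homotopy} applied to the fiber components and horizontal intersection points of $X'_{c_0}$---forces $\tilde{\@H}'$ itself to factor through $V' \subset X'$, since $\alpha_1'(u)$ sits in the same configuration inside $X'_{c_0}$ that $\alpha_1(u)$ sits in inside $X_{c_0}$ under the iso $V \cong V'$. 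Pulling $\tilde{\@H}'$ back through $V' \xrightarrow{\sim} V$ produces a $1$-ghost homotopy $\tilde{\@H}$ of $U$ in $X$ connecting $\alpha_1$ and $\alpha_2$, and Lemma \ref{lemma S^2 agrees with pi0} then gives that $\alpha_1$ and $\alpha_2$ have the same image in $\pi_0^{\A^1}(X)(U)$. This establishes both $\@S^n(X)(U) = \@S^2(X)(U)$ and $\@S^2(X)(U) = \pi_0^{\A^1}(X)(U)$ for all $n \geq 2$. The main technical hurdle is the numerical bookkeeping in the elementary-transformation step, namely verifying that the shifted pair $(r', s')$ always lies in $[0,1]^2$ so that a suitable $X' \in \@N'$ can always be chosen.
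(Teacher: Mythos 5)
Your overall strategy is the same as the paper's: reduce to the case $X' \in \@N'$ by identifying the open subscheme $V \subset X$ through which $\@H$ factors, transferring it across the elementary-transformation isomorphism of Lemma~\ref{lemma elementary transformation}, invoking Proposition~\ref{proposition restricted nodal}, and pulling the resulting $1$-ghost homotopy back to $X$. The numerics in the shift step are fine. There are, however, two problems, one minor and one essential.

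The minor problem: when $\alpha_1(u)$ lies on a boundary line $\ell_{s_0}$ or $\ell_{s_p}$, the open subscheme through which the original $n$-ghost homotopy factors is $\~U_{s_i}$ (resp.\ $\~U_{s_i,s_{i+1}}$), not $U_{s_i}$ (resp.\ $U_{s_i,s_{i+1}}$); the latter additionally remove the horizontal intersection point $\ell_\infty\cap X_{c_0}$ (resp.\ $\ell_{-\infty}\cap X_{c_0}$), and the $n$-ghost homotopy need not avoid those points. So your $V$ is too small in the boundary case. This can be patched by systematically tracking the $\~U$-variants, as the paper does.

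The essential gap is in the sentence where you claim that ``the same rigidity argument \dots forces $\~{\@H}'$ itself to factor through $V'\subset X'$.'' Once you invoke Proposition~\ref{proposition restricted nodal} as a black box, you obtain \emph{some} $1$-ghost homotopy $\~{\@H}'$ on $X'$, but Proposition~\ref{proposition local generator ghost homotopy} and Lemma~\ref{lemma topological ruled surfaces} constrain it only relative to the \emph{fiber components} of $X'_{c_0}$: at a fiber over $u'$ where $\alpha_1'(u')$ is the generic point of a line $\ell_{t_j}$, Lemma~\ref{lemma topological ruled surfaces} confines $h_{\~{\@H}'}(\Sp(\~{\@H}')_{u'})$ to $\ell_{t_j}$ minus its intersections with the \emph{other fiber lines} of $X'$ --- a set which, when $\ell_{t_j}$ is a boundary line, is isomorphic to $\A^1$ and still contains the intersection point with $\ell_\infty$ (or $\ell_{-\infty}$). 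Nothing in the abstract rigidity forces $\~{\@H}'$ to avoid that point: $\ell_\infty\cap X'_{c_0}$ is not a fiber component, so Lemma~\ref{lemma topological ruled surfaces} is silent about it, and you cannot apply Proposition~\ref{proposition local generator ghost homotopy} with $T$ equal to that node because you have no a priori guarantee that the \emph{new} $1$-ghost homotopy $\~{\@H}'$ lifts to the blowup of $X'$ at that node (that proposition has such a lifting as a hypothesis). The paper resolves exactly this point with Remark~\ref{remark corner lines}, which is not an abstract rigidity statement but a recorded property of the \emph{explicitly constructed} $1$-ghost homotopy from the proof of Proposition~\ref{proposition restricted nodal}: at every fiber $u'$ where $\~\alpha_1(u') = \~\alpha_2(u')$ (which, as you note, is forced by the $\A^1$-rigidity of the original homotopy on the punctured line), the constructed homotopy sends $\Sp(\@H')_{u'}$ to the single point $\~\alpha_1(u')$, hence lands in $W'$. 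Without invoking this property --- or some substitute argument about the specific homotopy produced --- the passage from ``$\~{\@H}'$ exists on $X'$'' to ``$\~{\@H}'$ factors through $V'$'' is unjustified.
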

\begin{proof}
As in the proof of Proposition \ref{proposition restricted nodal}, we fix a morphism $\gamma: U \to C$ and restrict our attention to $\gamma$-morphisms in the sense of Definition \ref{definition homotopy respecting fibres}. As observed in Remark \ref{remark trivial case}, it suffices to focus on the case in which $\gamma$ maps the generic point of $U$ to the generic point of $C$ and the closed point $u$ of $U$ to the closed point of $C$.  Let $\alpha_1, \alpha_2:U \to X$ be $\gamma$-morphisms which are connected by an $n$-ghost homotopy which we denote by $\@H$.  

Let $\ell_0, \ell_{s_1}, \ldots, \ell_{s_p}$ denote all the lines on $X$ where $0 = s_0< s_1 < \cdots < s_p$ are rational numbers. Recall from the discussion following the statement of Theorem \ref{theorem nodal case} that $s_p$ is necessarily an integer.  Let $S := \{s \in \#Q ~|~ \alpha_1(u) \in \ell_{s}\}$. We know that $1 \leq |S| \leq 2$. By Lemma \ref{lemma topological ruled surfaces} and Remark \ref{remark homotopies on ruled surfaces}, the $n$-ghost homotopy $\@H$ connecting $\alpha_1$ and $\alpha_2$ factors through the subscheme $W = X \backslash \left(\bigcup_{s_j \notin S} \ell_{s_j} \right)$. 

\noindent
\textbf{Case A:} Suppose that $|S|=1$, say $S= \{s_i\}$. 

If $i \notin \{0,p\}$, then $\ell_{s_i}$ meets exactly two lines and we see that $W \cong U_{s_i}$ (see Definition \ref{definition U_r}). If $i \in \{0,p\}$, this complement is the open subscheme $\~U_{s_i}$ of $X$. In either case, by Lemma \ref{lemma elementary transformation}, $W$ is seen to be isomorphic to an open subscheme $W' \cong U_{s_i - \lfloor s_i \rfloor}$ of some $X' \in \@N'$. By Proposition \ref{proposition restricted nodal}, the morphisms $U \stackrel{\alpha_i}{\to} W \cong W' \hookrightarrow X'$, for $i =1,2$, are connected by a $1$-ghost homotopy which satisfies the hypothesis of Lemma \ref{lemma S^2 agrees with pi0}. Again, by Lemma \ref{lemma topological ruled surfaces} and Remark \ref{remark homotopies on ruled surfaces}, this $1$-ghost homotopy factors through $W'$. Thus, $\alpha_1$ and $\alpha_2$ are also connected by a $1$-ghost homotopy of $U$ on $X$, satisfying the hypothesis of Lemma \ref{lemma S^2 agrees with pi0}.  

\noindent
\textbf{Case B:} Now suppose that $|S| = 2$; then $S=\{s_i, s_{i+1}\}$ for some $i$.  

In this case, $W$ is either of the form $U_{s_i,s_{i+1}}$ or $\~U_{s_i, s_{i+1}}$ depending on the values of $i$. Using Lemma \ref{lemma elementary transformation}, we find that $W$ is isomorphic to $U_{t_i, t_{i+1}}$ or $\~U_{t_i, t_{i+1}}$, where $t_i = s_i-\lfloor s_i \rfloor$ and $t_{i+1} = s_{i+1}-\lfloor s_i \rfloor$. In either case, it is isomorphic to an open subscheme $W'$ of some $X' \in \@N'$. The proof of Proposition \ref{proposition restricted nodal} gives us a $1$-ghost homotopy $\@H'$ connecting $\alpha_1$ and $\alpha_2$ which satisfies the hypothesis of Lemma \ref{lemma S^2 agrees with pi0}. We claim that for any point $u'$ of $U$ such that $\gamma(u') = c_0$, $h_{\@H'}$ maps $\Sp(\@H')_{u'}$ into $W'$. 

If $\alpha_1(u') = \alpha_1(u) = \ell_{t_i} \cap \ell_{t_{i+1}}$, our claim follows immediately since Lemma \ref{lemma topological ruled surfaces} shows that $h_{\@H'}$ maps $\Sp(\@H')_{u'}$ to $\alpha_1(u)$. 

Suppose $\alpha_1(u')$ lies in $\ell_{t_{j(u')}} \backslash \left(\bigcup_{r \neq j(u')} \ell_{t_r} \right)$ for some $j(u') \in \{i, i+1\}$. By Lemma \ref{lemma topological ruled surfaces}, $h_{\@H}$ maps $\Sp(\@H)_{u'}$ into $\ell_{t_{j(u')}} \backslash \left(\bigcup_{r \neq j(u')} \ell_{t_r} \right) = \ell_{t_{j(u')}} \cap W'$, which is isomorphic to either $\#A^1$ or $\#A^1 \backslash \{0\}$. We will examine these cases separately. 

If $\ell_{t_{j(u')}} \cap W' \cong \#A^1$, we simply use Lemma \ref{lemma topological ruled surfaces} to deduce that $h_{\@H'}$ maps $\Sp(\@H')_{u'}$ into $\ell_{t_j}$ and hence into $W'$. This proves our claim in this case. 

Now, suppose that $\ell_{t_{j(u')}} \cap W' \cong \#A^1 \backslash \{0\}$, which is $\#A^1$-rigid.  Then $h_{\@H}$ maps $\Sp(\@H)_{u'}$ to the point $\alpha_1(u')$. In particular, $\alpha_1(u') = \alpha_2(u')$ for every such point $u'$.   By Remark \ref{remark corner lines}, $h_{\@H'}$ maps $\Sp(\@H')_{u'}$ to a single point which is, in fact, the generic point of $\ell_{t_{j(u')}}$. Thus, the $1$-ghost homotopy $\@H'$ factors through the open subscheme $W'$ of $X'$. The argument to prove Case B may now be completed as in Case A. 
\end{proof}

\section{The general case}
\label{section general case}

In this section, we prove the main results (Theorem \ref{theorem ruled iterations of S} and Theorem \ref{main theorem}) using the results proved in Sections \ref{section homotopies on a blowup}, \ref{section geometry of ruled surfaces} and \ref{section nodal blowups}.  

Let $S$ be a two dimensional scheme over $k$, and let $s \in S(k)$ be such that $S$ is essentially smooth at $s$. Let $\tilde S \to S$ be a proper birational morphism which is an isomorphism over $S \backslash \{s\}$ and suppose that $\tilde S$ is essentially smooth at all points which map to $s$. Then the morphism $\~S \to S$ is the blowup of $S$ at a closed subscheme $T$, which is supported on $s$. Moreover, if $\tilde S$ is essentially smooth over $k$, then $\tilde S \to S$ can be written as a composition $\tilde S = S_n \to S_{n-1} \to \cdots \to S_0 = S$, where $S_{i+1}$ is obtained from $S_i$ by blowing up a smooth point that lies in the preimage of $s$ with respect to the morphism $S_{i} \to S$; such a smooth point is said to be \emph{infinitely near} to $s$. The integer $n$ does not depend on the way in which the morphism $\tilde S \to S$ is written as a composition of such blowups. Indeed, it is just the number of components of the preimage of $s$ with respect to $\tilde S \to S$.  

\begin{lemma}
\label{lemma etale pullback}
Let $S$ be a two-dimensional scheme over $k$ and let $s$ be a $k$-valued point of $S$ such that $S$ is essentially smooth at $s$. Let $\tilde S$ be obtained by blowing up $n$ points infinitely near to $s$. Let $\phi: T \to S$ be an \'etale morphism and let $\tilde T = \tilde S \times_{S,\phi} T$. Let $\phi^{-1}(s) = \{t_1, \ldots, t_m\}$. Then $\~T$ is obtained from $T$ by blowing up $n$ points infinitely near to $t_i$ for each $i$, $1 \leq i \leq m$. 
\end{lemma}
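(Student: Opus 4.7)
The plan is to proceed by induction on $n$, the number of blow-ups, using the fundamental fact that blow-ups commute with flat base change (see e.g.\ \cite[Chapter II, Corollary 7.15]{Hartshorne}). Since $\phi: T \to S$ is \'etale, hence flat, this commutation property will apply at every stage of the tower of blow-ups $\tilde S = S_n \to S_{n-1} \to \cdots \to S_0 = S$.

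For the base case $n = 0$, there is nothing to prove. For the inductive step, assume we have proved the statement for $S_i \to S$, so that $S_i \times_S T \to T$ is obtained by blowing up $i$ points infinitely near to $t_j$ for each $j$. Now $S_{i+1} \to S_i$ is the blow-up at a smooth closed point $p_{i+1}$ with $\psi_i(p_{i+1}) = s$, where $\psi_i: S_i \to S$ is the structure morphism. Since the projection $\pi_i: S_i \times_S T \to S_i$ is the base change of the \'etale morphism $\phi$ and is therefore \'etale, the flat base change property of blow-ups gives
\[
S_{i+1} \times_S T \;\simeq\; \mathrm{Bl}_{\pi_i^{-1}(p_{i+1})}(S_i \times_S T).
\]
It remains to identify the scheme-theoretic fiber $\pi_i^{-1}(p_{i+1})$. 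Since $p_{i+1}$ is a $k$-valued point, this fiber is isomorphic to $\mathrm{Spec}\,k \times_S T = \phi^{-1}(s) = \{t_1, \ldots, t_m\}$, consisting of $m$ distinct reduced $k$-points of $S_i \times_S T$. Each such point lies in the preimage of the corresponding $t_j$ under the morphism $S_i \times_S T \to T$, and is therefore infinitely near to $t_j$ in the sense of the discussion preceding the lemma. Blowing up these $m$ points simultaneously is the same as blowing them up one at a time (in any order), since they are pairwise disjoint closed subschemes. This shows that $S_{i+1} \times_S T$ is obtained from $S_i \times_S T$ by blowing up exactly one smooth point infinitely near to each $t_j$, which by the induction hypothesis gives the required conclusion for $S_{i+1}$.

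Setting $i + 1 = n$ yields the lemma. The only non-trivial ingredient is the compatibility of blow-ups with the \'etale base change $\phi$, and the identification of the fiber of an \'etale morphism over a $k$-point with the (reduced) pullback set, both of which are standard and routine. I do not anticipate a substantive obstacle; the main care needed is in correctly indexing the infinitely near points so that the inductive hypothesis can be applied cleanly at each stage.
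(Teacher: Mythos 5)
Your proof is correct, but it takes a genuinely different route from the paper's. You proceed by induction on the length of the blow-up tower, at each stage invoking the compatibility of blow-ups with flat (here \'etale) base change to identify $S_{i+1}\times_S T$ as the blow-up of $S_i\times_S T$ along the fiber $\pi_i^{-1}(p_{i+1})$, and then observing this fiber is $m$ disjoint reduced $k$-points (one over each $t_j$) so that blowing them up can be done one at a time. The paper instead avoids induction entirely: it notes that $\~T$ is essentially smooth and $\~T\to T$ is proper and an isomorphism away from $\phi^{-1}(s)$, hence by the structure theory of birational morphisms of smooth surfaces must already be a composition of point blow-ups over each $t_i$; the number of blow-ups over $t_i$ is then read off as the number of components of $\~T\times_{T,t_i}\Spec k$, which by cancellation of the base change is isomorphic to $\~S\times_{S,s}\Spec k$ and thus has $n$ components. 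Your argument is more constructive and self-contained---it does not lean on the uniqueness of factorizations of birational surface morphisms or on the ``number of exceptional components equals number of blow-ups'' principle---while the paper's is shorter. One small point: the Hartshorne reference you give (Chapter II, Corollary 7.15) is about strict transforms of closed subschemes rather than flat base change of blow-ups; the fact you need (that blow-ups commute with flat base change) is standard but is not quite what that corollary states, so you should cite a more precise source or derive it from the universal property of Proposition 7.14.
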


\begin{proof}
Clearly, the morphism $\tilde T \to T$ is an isomorphism over $T \backslash \phi^{-1}(s)$. Also, $\~T$ is essentially smooth over $k$ and thus, the morphism $\tilde T \to T$ is the composition of blowups of finitely many points infinitely near to $t_i$. Thus, it remains to be shown that the scheme $\tilde T \times_{T,t_i} \Spec k$ has $n$ components for each $i$. We observe that
\begin{align*} 
\tilde T \times_{T,t_i} \Spec k & \simeq (\tilde S \times_{S,\phi} T) \times_{T,t_i} \Spec k\\
& \simeq \tilde S \times_{S, t_i \circ \phi} \Spec k \\
& \simeq \tilde S \times_{S, s} \Spec k,   
\end{align*}
which completes the proof. 
\end{proof}

\noindent {\bf Proof of Theorem \ref{theorem ruled iterations of S}.} 

Part (a) is already handled in Proposition \ref{proposition P1 bundle over an A1-rigid scheme}.  We now give a proof of part (b).  Let $E$ be a $\mathbb P^1$-bundle over a smooth projective curve $C$ of genus $g>0$ over an algebraically closed field $k$ of characteristic $0$. Let $X$ be a smooth projective surface, which is not isomorphic to $E$ and has $E$ for its minimal model.  Let $U = \Spec R$ be a smooth henselian local scheme over $k$. Let $\mathfrak{m}$ denote the maximal ideal of $R$ and let $u$ be the associated closed point of $U$. Since $C$ is $\A^1$-rigid, by Lemma \ref{lemma strategy ruled}, if two morphisms $U \to X$ are $n$-ghost homotopic for any $n$, their compositions with the map $X \to C$ have to be the same. Thus, we fix a morphism $\gamma: U \to C$ and characterize $n$-ghost homotopy classes of $\gamma$-morphisms $U \to X$ in the sense of Definition \ref{definition homotopy respecting fibres}. Let $c_0 = \gamma(u)$. Note that all the $n$-ghost homotopies of $\gamma$-morphisms $U \to X$ factor through the pullback of $X$ by the morphism $\Spec \@O_{C,c_0}^h \to C$.  The pullback of $E$ by the morphism $\Spec \@O_{C,c_0}^h \to C$ is just $\P^1 \times {\Spec \@O_{C,c_0}^h}$.  We may therefore replace $C$ with $\Spec \@O_{C,c_0}^h$.  If $x$ is a local parameter at $c_0$, then $\@O_{C,c_0}^h$ is isomorphic to the Henselization of $k[x]$ at the ideal $\<x\>$. Thus, we will simply take $C$ to be as in Notation \ref{notation nodal blowups} after which $E$ is seen to be isomorphic to $\#P^1_C$. For any scheme $S$ over $C$, we will denote the fiber $S \times_{C,c_0} \Spec \kappa(c_0)$ of $S \to C$ over $c_0$ by $S_{c_0}$.

The birational morphism $X \to \#P^1_C$ can be viewed as the blowup of $\#P^1_C$ at the ideal sheaf $\@J$, the support of which, denoted by $\Supp(\@J)$, is a closed subscheme of codimension $2$. For any point $Q \in \Supp(\@J)$, let $N_Q$ denote the number of components of the fiber of $X \to \#P^1_C$. We know that the morphism $X \to \#P^1_C$ can be written as a composition of a sequence of blowups at smooth closed points. The number $N_Q$ can also be interpreted as the number of points infinitely near to $Q$ which have been blown up in this manner. We will prove the result by induction on $$N_X:= \max_{Q \in \Supp(\@J)} N_Q.$$ The case $N_X=1$ is clearly handled by the results of Section \ref{section nodal blowups}.

As we observed in Remark \ref{remark trivial case}, we are reduced to considering the case in which $\gamma$ maps the closed point $u$ of $U$ to the closed point $c_0$ of $C$ and the generic point of $U$ to the generic point of $C$. We write $r_0 = \gamma^*(x) \in R$ for the pullback of the uniformizing parameter $x$ on the coordinate ring of $C$. 

We use the notation from the proof of Theorem \ref{theorem nodal case}, which we quickly recall for the sake of convenience. Any $\gamma$-morphism $\alpha: U \to \#P^1_C$, along with the identity map $U \to U$ induces a morphism $\beta: U \to \#P^1_U= U \times \#P^1 := U \times_{\gamma, C} \#P^1_C$. If the $\gamma$-morphism $\alpha$ maps the closed point $u$ of $U$ to any point of the form $(c_0, [p:1])$ for $p \in \kappa(c_0)$, then the corresponding section $\beta: U \to \#P^1_U$ maps $u$ to the point $(c_0, [p:1])$ where $p$ is now seen as an element of the field $R/\mathfrak{m}$. Hence it factors through the open subscheme $\#A^1_U = \Spec R[y] = \Spec R[Y/Z] \subset \#P^1_U$. Thus, any such $\beta$ is characterized by an $R$-algebra homomorphism $\beta^*: R[y] \to R$. Clearly, $\beta^*(y) = \alpha^*(y)$. Moreover, $\alpha$ maps $u$ to $(c_0, [0:1])$ if and only if $\alpha^*(y)$ lies in the maximal ideal $\mathfrak{m}$ of $R$.  For any $r \in R$, let $\beta_r: U \to \#P^1_U$ be the morphism induced by the $R$-algebra homomorphism $R[y] \to R$, $y \mapsto r$. Note that any section $\beta: U \to \#P^1_U$ such that $\beta(u) = (u, [p:1])$ for some $p \in R/\mathfrak{m}$ is of the form $\beta_r$ for some unique $r \in R$.   

We now rephrase our problem as follows:  With $C$, $X$, $U$ and $\gamma: U \to C$ as above, let $\alpha, \alpha': U \to \#P^1_C$ be two $\gamma$-morphisms. Suppose that $\alpha, \alpha'$ are connected by an $n$-ghost homotopy $\@H$ which lifts to $X$ (which implies that $\alpha$ and $\alpha'$ also admit lifts to $X$). We would like to prove that $\alpha$ and $\alpha'$ are connected by a $1$-ghost homotopy which satisfies the hypothesis of Lemma \ref{lemma S^2 agrees with pi0} and also lifts to $X$.  In view of Proposition \ref{proposition local generator ghost homotopy}, we have the following two cases.

\noindent \emph{\bf Case I:}  $\alpha$ and $\alpha'$ map $u$ into $\#P^1_C \setminus \Supp(\@J)$. 

If $\Supp(\@J)$ is a singleton, then it is easy to see that $\alpha$ and $\alpha'$ are $\A^1$-chain homotopic.  Now suppose that $\Supp(\@J)$ consists of at least two closed points.  Without any loss of generality, we may assume that two of these points are $(c_0, [0:1])$ and $(c_0,[1:0])$. Thus, as we saw above, there exist $r,r' \in R$ such that $\beta_{r}, \beta_{r'}: U \to \#P^1_U$ are the pullbacks of $\alpha$, $\alpha'$ respectively, where $r, r'\in R$. Let $\~{\@H}$ be the pullback of $\@H$ to $\#P^1_U$.  Since $\@H$ lifts to $X$, $\~{\@H}$ lifts to $X_{\gamma}$. Thus, by Proposition \ref{proposition local generator ghost homotopy}, $\~{\@H}$ factors through the complement of the preimage of $\Supp(\@J)$ with respect to the morphism $\#P^1_U \to \#P^1_C$. Thus $\@H$ factors through the complement of $\Supp(\@J)$. Hence, the restriction of $h_{\@H}$ to $h_{\@H}^{-1}(\#P^1 \times \{c_0\})_{\rm red}$ factors through $\left(\#P^1 \backslash \{ [0:1], [1:0] \} \right) \times \{c_0\}$ which is $\#A^1$-rigid. Thus, the restriction of $h_{\@H}: \Sp(\@H) \to \#P^1_C$ to $h_{\@H}^{-1}(\#P^1 \times \{c_0\})_{\rm red}$ is constant. Therefore, for every prime ideal $\mathfrak{p}$ of $R$ containing $r_0 = \gamma^*(x)$, we have $r-r' \in \mathfrak{p}$.  Hence, it follows that $r-r' \in \sqrt{\<r_0\>}$.  Now consider the homotopy $\#A^1_U = \Spec(R[T]) \to \Spec(R[Y/Z]) = \#A^1_U \subset \#P^1_U$, defined by the ring homomorphism $R[Y/Z] \to R[T]$, $Y/Z \mapsto r(1-T) + r'T$, which connects $\beta_{r}, \beta_{r'}: U \to \#P^1_U$. Since $r-r' \in \sqrt{\<r_0\>}$, one easily sees that this homotopy avoids $\Supp(\@J)$ and consequently, lifts to $X$. This proves the result in Case I (observe that this argument proves the result for all values of $N_X$). 

\noindent \emph{\bf Case II:} $\alpha, \alpha'$ map $u$ into $\Supp(\@J)$.  By the results of Section \ref{subsection homotopies surfaces}, $\alpha, \alpha'$ must map $u$ to the same point $P \in \Supp(\@J)$.

We use Lemma \ref{lemma single point support} to obtain $X' \in \@N$ such that the morphism $X \to \#P^1_C$ factors as
\[
X \stackrel{\phi}{\to} X' \stackrel{\psi}{\to} \#P^1_C,
\]
where $\phi$ and $\psi$ are compositions of successive blowups at smooth closed points and $\phi:X \to X'$ is the blowup of an ideal sheaf $\@J'$, the support of which does not contain any node of $X'$.

Let us denote the lifts of $\alpha$, $\alpha'$ and $\@H$ to $X'$ by $\~{\alpha}$, $\~{\alpha}'$ and $\~{\@H}$ respectively. It will suffice to show that $\~{\alpha}$ and $\~{\alpha}'$ are connected by a $1$-ghost homotopy $\@H'$ of $U$ on $X'$ which satisfies the hypothesis of Lemma \ref{lemma S^2 agrees with pi0} and also lifts to $X$.

\noindent \emph{\bf Case II-A:} $\~{\alpha}(u)$ lies in the intersection of two distinct lines of $X'$.  

For $1 \leq i \leq p$, let $P_i$ be the point of intersection of $\ell_{i-1}$ and $\ell_i$. Suppose that $\~{\alpha}(u) = P_{i_0}$.  The proof of Theorem \ref{theorem nodal case} shows that $\~{\alpha}$ and $\~{\alpha}'$ are connected by a $1$-ghost homotopy $\@H'$ which satisfies the hypothesis of Lemma \ref{lemma S^2 agrees with pi0}.  We would like to choose $\@H'$ in such a way that for any $u' \in U$ for which $\gamma(u') = c_0$, the morphism $h_{\@H'}$ maps $\Sp(\@H')_{u'}$ into the complement of $\Supp(\@J')$. This will be true for any $u'$ such that $\~{\alpha}(u') = P_{i_0}$ (in particular, this includes the case $u' = u$) since Lemma \ref{lemma topological ruled surfaces} implies that, in this case, $\Sp(\~{\@H})_{u'}$ is mapped to $P_{i_0}$, which is not in $\Supp(\@J')$.  The rest of the argument is along the same lines as the argument in Case B in the proof of Proposition \ref{proposition nodal reduction}. 

Suppose $u' \in U$ is such that $\gamma(u') = c_0$, but $\~{\alpha}(u') \neq P_{i_0}$. Since $\gamma(u') = c_0$, $\~{\alpha}(u')$ lies in the closed fiber $X'_{c_0} = \bigcup_{i=0}^p \ell_{s_i}$. Since $\~{\alpha}(u)$ is in the closure of $\~{\alpha}(u')$, it follows that $\~{\alpha}(u')$ lies in $\ell_{s_{i_0-1}} \cup \ell_{s_{i_0}}$. As $\~{\alpha}(u') \neq P_{i_0}$, $\~{\alpha}(u')$ lies in $\ell_{s_{i_0-1}} \backslash \ell_{s_{i_0}}$ or $\ell_{s_{i_0}} \backslash \ell_{s_{i_0 - 1}}$. Let $j(u') \in \{i_0 - 1, i_0\}$ be such that $\~{\alpha}(u') \in \ell_{s_{j(u')}}$. Since $\~{\alpha}(u) = P_{i_0}$ lies in the closure of $\~{\alpha}(u')$ but is not equal to $\~{\alpha}(u')$, it follows that $\~{\alpha}(u')$ is equal to the generic point of $\ell_{s_{j(u')}}$. Since $\~{\alpha}(u')$ lies only on the component $\ell_{s_{j(u')}}$ of $X'_{c_0}$, it follows by Lemma \ref{lemma topological ruled surfaces} that for any $1$-ghost homotopy $\@H'$ of $U$ on $X'$ connecting $\~{\alpha}$ and $\~{\alpha}'$, $\Sp(\@H')_{u'}$ is mapped into the open subscheme $X' \backslash \left(\bigcup_{i \neq j(u')} \ell_{s_i} \right)$ of $X'$. Thus, $h_{\@H'}$ maps $\Sp(\@H')_{u'}$ into $\ell_{s_j(u')} \backslash \left(\bigcup_{i \neq j(u')} \ell_{s_i} \right)$.  

If $\Supp(\@J') \cap \ell_{j(u')} = \emptyset$, it is obvious that for a $1$-ghost homotopy $\@H'$ as above, $h_{\@H'}$ maps $\Sp(\@H')_{u'}$ into the complement of $\Supp(\@J')$.  If $j(u') \in \{0,p\}$, then using Remark \ref{remark corner lines}, we see that $\@H'$ lifts to $X$.  If $j(u') \notin \{0,p\}$, then $\ell_{j(u')}$ meets two components of $X'_{c_0}$ and so $\ell_{j(u')} \backslash \left(\bigcup_{i \neq j(u')} \ell_i \right)$ is isomorphic to $\#A^1 \backslash \{0\}$, which is $\#A^1$-rigid. Thus, $\Sp(\~{\@H})_{u'}$ is mapped to the generic point of $\ell_{s_{j(u')}}$. In particular, it factors through the complement of $\Supp(\@J')$. This proves the result in Case II-A. 

\noindent \emph{\bf Case II-B:} $\~{\alpha}(u)$ lies on a single line of $X'$. 

Let $r,r' \in R$ be such that $\beta_r, \beta_{r'}: U \to \#P^1_U$ are the pullbacks along $\gamma$ of $\alpha, \alpha'$ respectively. Suppose $\~{\alpha}(u)$ lies only on the line $\ell_{s}$ of $X'$, where $s \in \{s_0, \ldots, s_p\}$. Then the homotopy $\~{\@H}$ factors through $X' \backslash \left(\bigcup_{s_i \neq s} \ell_{s_i} \right)$.

There exists a unique rational number $s$ and a line labelled $\ell_s$ in $X'$ such that $\~P$ lies on $\ell_s$.  Let $a,b$ be coprime positive integers such that $s=a/b$ is the reduced expression.   By Proposition \ref{proposition nodal reduction}, we may assume that $s = a/b < 1$.  Then $x^a/y^b$ is a parameter on the line $\ell_s$.  It follows that $r, r'$ are unit multiples of each other in $R$ and that $r_0^a/r^b \in R^{\times}$.  Since $R$ is henselian local over $k$ and $k$ is algebraically closed, $r_0^a/r^b$ has a $b$-th root in $R$.  Consequently, $r_0^a$ has a $b$-th root in $R$ and since $a$ and $b$ are coprime, we conclude that $r_0$ admits a $b$-th root in $R$.  Fix $\~r_0 \in R$ such that ${\~r_0}^b = r_0$.  Let $\phi_b: C \to C$ be the morphism corresponding to the $k$-homomorphism $k[x]_{\<x\>}^h \to k[x]_{\<x\>}^h$ defined by $x \mapsto x^b$.  By Theorem \ref{theorem etale cover}, we have a $b$-sheeted \'etale cover (see Definition \ref{definition U_r} for the definition of the open subscheme $U_s$ of $X'$)
\[
\~X'= (U_s \times_{C, \phi_b} C)[x^a/y]  \to (U_s \times_{C, \phi_b} C) \to U_s
\]
and an isomorphism $U_{a} \xrightarrow{\sim} \~X'$ of $C$-schemes.  By Proposition \ref{proposition local generator ghost homotopy}, we see that $\@H$ factors through the open subscheme $U_s$ of $X'$.  Using the fixed $b$-th root $\~r_0$ of $r_0$, we see that the sections $\alpha, \alpha'$ and the $n$-ghost homotopy $\@H$ lift to $U_s \times_{C, \phi_b} C$.  We claim that $\@H$ lifts to $\~X'$.  
\[
\xymatrix{
 & \~X' \ar[d] \\
 & U_s \times_{C, \phi_b} C \ar[d] \\
\Sp(\@H) \ar[r]^-{h_{\@H}}\ar[ur] \ar[d]_-{f_{\@H}} & U_s \ar[d] \\
U \ar[r]^-{\alpha, \alpha'}        & X'
}
\]
Now, the rational function $x^a/y^b$ defines a morphism $X' \to \#P^1_C$ (Lemma \ref{lemma zeros and poles}) and its pullback $x^{ab}/y^b = \left( x^a/y \right)^b$ to $U_s \times_{C, \phi_b} C$ defines a morphism $\Sp(\@H) \to \#P^1_C$.  Since $\Sp(\@H)$ is normal, the pullback of the rational function $x^a/y$ to $\Sp(\@H)$ is regular at every point of $\Sp(\@H)$.  By Proposition \ref{proposition resolving indeterminacies}, $h_{\@H}$ lifts to $\~X' = (U_s \times_{C, \phi_b} C)[x^a/y] \simeq U_a \simeq U_0$, where the last isomorphism is given by Lemma \ref{lemma elementary transformation}.  Set $\~X : = \~X' \times_{X'} X$, where the morphism $\~X' \to X'$ is given by the composition $\~X' \to U_s \hookrightarrow X'$, which is \'etale.  Now, the lifts of $\alpha$ and $\alpha'$ to $X$ are $n$-ghost homotopic if and only if their lifts in $\~X$ are $n$-ghost homotopic.  Since $m'<m$, it can be seen from Lemma \ref{lemma etale pullback} that there exists a smooth, projective ruled surface $X''$ such that 
\begin{itemize}
\item $X''$ is obtained by successive blowups of $\P^1_C$ at finitely many smooth, closed points;
\item $\~X'$ is an open subscheme of $X''$;
\item $\alpha, \alpha'$ lift to $X''$ and their lifts in $X''$ are $n$-ghost homotopic if and only if the lifts of $\alpha, \alpha'$ in $\~X'$ are $n$-ghost homotopic;  and 
\item $N_{X''}<N_X$.
\end{itemize}

By the induction hypothesis, we conclude that the lifts of $\alpha$ and $\alpha'$ to $X''$ are $1$-ghost homotopic and map to the same element in $\pi_0^{\A^1}(X'')(U)$.  Moreover, the same holds for the lifts of $\alpha$ and $\alpha'$ to $\~X'$ and to $X$.  This completes the proof of Theorem \ref{theorem ruled iterations of S}.   \qed

\noindent {\bf Proof of Theorem \ref{main theorem}.} 

Let $X$ be a smooth, projective birationally ruled surface over an algebraically closed field $k$.  By \cite[Chapter III, Theorem 2.3]{Kollar}, $X$ can be obtained by successively blowing up a finite number of smooth, closed points on $\#P^2$ or a $\P^1$-bundle over a curve $C$.  If $X$ is rational, then by \cite[Corollary 2.3.7]{Asok-Morel}, $X$ is $\A^1$-connected.  Hence, $\pi_0^{\#A^1}(X)$ is evidently $\A^1$-invariant, being the trivial one-point sheaf.  If $X$ is not rational, then $X$ has to be birationally ruled over a curve $C$ of genus $>0$, in which case the result is proved in Theorem \ref{theorem ruled iterations of S} above. \qed

\begin{remark}
\label{remark naive ruled} 
It has been shown in \cite[Theorem 1]{Balwe-Sawant-ruled2} that for a smooth projective surface $X$ birationally ruled over a curve $C$ of genus $>0$ over an algebraically closed field $k$, one has $$\pi_0^{\A^1}(X)(U) \simeq \mathcal S^n(X)(U),$$ for all $n \geq 1$ and for all $U \in Sm/k$ of dimension $\leq 1$.  However, if $X$ is not a minimal model, then \cite[Theorem 2]{Balwe-Sawant-ruled2} shows that $\mathcal{S}(X) \nsimeq C$ and that $\mathcal S(X) \neq \mathcal S^2(X)$.  This shows, in particular, that the Morel-Voevodsky singular construction $\Sing X$ is not $\A^1$-local for such $X$. 
\end{remark}

\begin{remark}
\label{remark S future} 
Given a smooth, projective variety $X$ over a field $k$, it is an interesting question to determine if there exists a natural number $n_X$ (possibly depending only on the dimension of $X$), where $n_X$ is the least with the property that $$\@S^{n_X}(X) \simeq \@S^{n}(X) \simeq  \pi_0^{\A^1}(X),$$ for all $n \geq n_X$.  If $X$ is a smooth projective surface over an algebraically closed field of characteristic $0$, then Theorem \ref{theorem ruled iterations of S} shows that $n_X \leq 2$.  It seems reasonable to conjecture that $n_X \leq \dim X$, for a general smooth projective variety $X$ over an algebraically closed field $k$. 
\end{remark}


\begin{thebibliography}{9999}

\bibitem{Artin}
M. Artin,
\emph{Some numerical criteria for contractibility of curves on algebraic surfaces}, 
Amer. J. Math. 84(1962), 485--496

\bibitem{Asok-Morel}
A. Asok and F. Morel:
\emph{Smooth varieties up to $\A^1$-homotopy and algebraic $h$-cobordisms}. 
Adv. Math. 227(2011), no. 5, 1990--2058.

\bibitem{Balwe-Hogadi-Sawant}
C. Balwe, A. Hogadi and A. Sawant:
\emph{$\A^1$-connected components of schemes},
Adv. Math. 282 (2015), 335--361. 

\bibitem{Balwe-Sawant-IMRN}
C. Balwe and A. Sawant:
\emph{$R$-equivalence and $\A^1$-connectedness in anisotropic groups},
Int. Math. Res. Not. IMRN 2015, No. 22, 11816--11827.

\bibitem{Balwe-Sawant-ruled2}
C. Balwe and A. Sawant:
\emph{Naive $\A^1$-homotopies on ruled surfaces},
Preprint, arXiv: 2002.08761 [math.AG] (2020).

\bibitem{Beauville}
A. Beauville:
\emph{Complex algebraic surfaces},
London Mathematical Society Student Texts, 34, Second edition, Cambridge University Press, Cambridge, 1996.


\bibitem{Choudhury}
U. Choudhury:
\emph{Connectivity of motivic $H$-spaces},
Algebr. Geom. Topol. 14 (2014), no. 1, 37--55.

\bibitem{GT}
U. G\"ortz, T. Wedhorn:
\emph{Algebraic Geometry I. Schemes With Examples and Exercises}, 
Advanced Lectures in Mathematics, Vieweg + Teubner, Wiesbaden, 2010.


\bibitem{GKP} 
R. L. Graham, D. E. Knuth, O. Patashnik: 
\emph{Concrete mathematics. A foundation for computer science}, Second edition. Addison-Wesley Publishing Company, Reading, MA, 1994.

\bibitem{Hartshorne}
Hartshorne, Robin:
\emph{Algebraic geometry},
Graduate Texts in Mathematics, No. 52. Springer-Verlag, New York-Heidelberg, 1977.

\bibitem{Kollar}
J. Koll\'ar:
\emph{Rational curves on algebraic varieties}, 
Ergebnisse der Mathematik und ihrer Grenzgebiete {32}, Springer Verlag, Berlin, 1996.

\bibitem{Morel}
F. Morel:
\emph{$\A^1$-algebraic topology over a field},
Lecture Notes in Mathematics, Vol. 2052, Springer, Heidelberg, 2012.

\bibitem{Morel-Voevodsky}
F. Morel and V. Voevodsky:
\emph{$\A^1$-homotopy theory of schemes},
Inst. Hautes \'Etudes Sci. Publ. Math. 90(1999) 45--143. 

\bibitem{Sawant-IC}
A. Sawant:
\emph{Naive vs. genuine $\A^1$-connectedness},
K-Theory - Proceedings of the International Colloquium, Mumbai, 2016, 21--33, Hindustan Book Agency, New Delhi, 2018.

\end{thebibliography}
\end{document}